\def\({\left(}
\def\){\right)}
\newcommand{\gl}{\mathfrak{gl}}
\newcommand{\ga}{\gamma}
\newcommand\Ref{\eqref}
\newcommand{\bra}[1]{\langle #1 |}        
\newcommand{\ket}[1]{{| #1 \rangle}}      
\newcommand{\nn}{\nonumber}
\newcommand{\bea}{\begin{eqnarray}}
\newcommand{\ena}{\end{eqnarray}}
\def\bel{\begin{eqnarray}}
\def\enl{\end{eqnarray}}
\newcommand{\be}{\begin{eqnarray*}}
\newcommand{\en}{\end{eqnarray*}}
\newcommand{\C}{{\mathbb C}}
\newcommand{\Z}{{\mathbb Z}}
\newcommand{\Q}{{\mathbb Q}}
\newcommand{\mc}{\mathcal}
\newenvironment{tenumerate}{
  \begin{enumerate}
  
  }{\end{enumerate}}
\newcommand{\bi}{\begin{tenumerate}}
\newcommand{\ei}{\end{tenumerate}}
\newcommand{\isoto}[1][]%
{{\mathop{\buildrel{\sim}\over\longrightarrow}\limits_{#1}}}
\def\[{\left[}
\def\]{\right]}
\newcommand{\la}{\lambda}
\newcommand{\La}{\Lambda}
\newcommand{\al}{\alpha}
\newcommand{\bs}{\boldsymbol}
\numberwithin{equation}{section}
\newtheorem{thm}{Theorem}[section]
\newtheorem{lem}[thm]{Lemma}
\newtheorem{rem}[thm]{Remark}
\newtheorem{cor}[thm]{Corollary}
\newtheorem{conj}[thm]{Conjecture}
\newcommand{\E}{{\mathcal E}}
\newcommand{\on}{\operatorname}
\newcommand{\F}{\mathcal F}
\def\bi{\mathbf{i}}
\def\mdf#1{#1}
\definecolor{4.2}{rgb}{0,0,0}
\definecolor{jimbo}{rgb}{0,0,0}
\begin{document}
\begin{title}[Branching rules for quantum toroidal  $\mathfrak{gl}_n$.]
{Branching rules for quantum toroidal  $\mathfrak{gl}_n$}
\end{title}
\author{B. Feigin, M. Jimbo, T. Miwa, and E. Mukhin}
\address{BF: Landau Institute for Theoretical Physics,
Russia, Chernogolovka, 142432, prosp. Akademika Semenova, 1a,   \newline
Higher School of Economics, Russia, Moscow, 101000,  Myasnitskaya ul., 20 and
\newline
Independent University of Moscow, Russia, Moscow, 119002,
Bol'shoi Vlas'evski per., 11}
\email{bfeigin@gmail.com}
\address{MJ: Department of Mathematics,
Rikkyo University, Toshima-ku, Tokyo 171-8501, Japan}
\email{jimbomm@rikkyo.ac.jp}
\address{TM: Institute for Liberal Arts and Sciences,
Kyoto University, Kyoto 606-8316,
Japan}\email{tmiwa@kje.biglobe.ne.jp}
\address{EM: Department of Mathematics,
Indiana University-Purdue University-Indianapolis,
402 N.Blackford St., LD 270,
Indianapolis, IN 46202, USA}\email{mukhin@math.iupui.edu}

\begin{abstract} We construct 
\textcolor{jimbo}{an analog} 
of the subalgebra 
$U\mathfrak{gl}(n)\otimes U\mathfrak{gl}(m)\subset U\mathfrak{gl}(m+n)$ 
in the setting of quantum toroidal algebras and 
study the restrictions of various representations to this subalgebra.
\end{abstract}

\maketitle
\section{Introduction}
\subsection{Motivation: the AGT conjecture}
The quantum toroidal algebra, \cite{GKV}, associated with a semi-simple Lie algebra $\mathfrak g$ 
is the quantum version of the universal enveloping algebra of the Lie algebra of currents
$\C^*\times\C^*\rightarrow\mathfrak g$. 

In this paper we consider only the case $\mathfrak g=\mathfrak{gl}_n$, $n\geq 1$.
The corresponding toroidal algebra
$\mathcal E_n=\mathcal E_n(q_1,q_2,q_3)$, see Section \ref{generators}, depends
on three deformation parameters $q_1,q_2,q_3$ such that $q_1q_2q_3=1$.
The algebra $\mathcal E_n(q_1,q_2,q_3)$ has two central elements
which we denote by $q^c$ and $\kappa$. In all representations
appearing in this paper, one of the central elements, $q^c$, always acts by $1$. 
In the limit $q_2\to 1$, the algebra $\mc E_n$ becomes the universal central extension of the universal enveloping algebra of the Lie algebra
$\mathbb{M}_n\otimes\C[Z^{\pm1},D^{\pm1}]$, see Section \ref{classical}.  Here $\mathbb{M}_n$ is the algebra of $n\times n$ matrices, and
$\C[Z^{\pm1},D^{\pm1}]$ is the algebra of functions on the one-dimensional quantum torus:
$ZD=q_1^nDZ$. The Lie algebra structure is given by the standard formula $[a,b]=ab-ba$.

\medskip

The algebra $\mathcal E_n$ has another important so-called conformal limit. This limit is more subtle and it is obtained by setting $q_1=\varepsilon^{\sigma_1}$, $q_2=\varepsilon^{\sigma_2}$,
$q_3=\varepsilon^{\sigma_3}$ with $\sigma_1+\sigma_2+\sigma_3=0$, $\kappa=\varepsilon^k$, and sending $\varepsilon \to 1$. This limit is called conformal, because the limiting algebra has a vertex operator algebra (conformal algebra) structure. The limiting algebra depends on $\sigma_1/\sigma_2$ and $k$. 
Note that the algebra obtained via the conformal limit for special values of parameters is smaller than $\mc E_n$.

The conformal limit is important for the study of the AGT conjecture. The AGT conjecture, \cite{AGT}, 
claims that when parameters of the $4$-dimensional topological super Yang-Mills
field theory go to an appropriate limit, the theory becomes deeply connected to a 
$2$-dimensional conformal field theory. At the same time the algebra $\mathcal E_n(q_1,q_2,q_3)$ acts by correspondences in the space of the $K$-theory of the moduli spaces of instantons related to the $4$-dimensional
topological super Yang-Mills field theory and the conformal limit of $\mathcal E_n(q_1,q_2,q_3)$ describes the relevant conformal field theory.

\subsection{Motivation: the coset constructions}
Consider a pair of affine Lie algebras: $\widehat{\mathfrak{gl}}_N$ and its subalgebra
$\widehat{\mathfrak{gl}}_{N-n}\subset\widehat{\mathfrak{gl}}_N$ both with level $k$.
The well-known coset construction  of conformal field theory
gives a new vertex operator algebra for this pair, which we denote
$\mathcal C_k(\widehat{\mathfrak{gl}}_N,\widehat{\mathfrak{gl}}_{N-n})$.
The coset algebras naturally appear in the problem of decompositions of representations.
Consider a restriction of an integrable representation $\pi$ of $\widehat{\mathfrak{gl}}_N$ with level $k$
to the subalgebra $\widehat{\mathfrak{gl}}_{N-n}$. Then 
we have the decomposition $\pi=\oplus_\alpha\ W_\alpha\otimes \mathcal R_\alpha$,
where $\mathcal R_\alpha$ are irreducible representations of $\widehat{\mathfrak{gl}}_{N-n}$, and
spaces of multiplicities $W_\alpha$ are irreducible 
\textcolor{jimbo}{representations} 
of the algebra
$\mathcal C_k(\widehat{\mathfrak{gl}}_N,\widehat{\mathfrak{gl}}_{N-n})$.

The problem of decomposition of $U\mathfrak{gl}_N$ module after restriction to $U\mathfrak{gl}_{N-n}$ is closely related to the problem of finding the commutant of the subalgebra $U\mathfrak{gl}_{N-n}$ in $U\mathfrak{gl}_N$.
This commutant can be described explicitly, and it is closely related to the Yangian of $\mathfrak{gl}_n$, see  \cite{O1}, \cite{O2}. Namely, the commutant is a factor of the Yangian and the Yangian can be viewed as the analytic continuation of the commutant with respect to the variable $N$. To get generators and relations of the coset algebra $\mathcal C_k(\widehat{\mathfrak{gl}}_N,\widehat{\mathfrak{gl}}_{N-n})$ one has to study the commutant in the affine setting. 

Clearly, $\mathcal C_k(\widehat{\mathfrak{gl}}_N,\widehat{\mathfrak{gl}}_{N-n})$ contains the subalgebra $\widehat{\mathfrak{gl}}_n$ with level $k$ generated by $E_{ij}(z)=\sum_{s\in\Z}(E_{ij}\otimes t^s)z^{-s}$, where $i,j=N-n+1,\dots,N$. It also contains the quadratic currents $E^{(2)}_{ij}(z)=\sum_{\alpha=1}^{N-n} :E_{i\alpha }(z)E_{\alpha j}(z):$
with $i,j=N-n+1,\dots,N$. In fact, the algebra $\mathcal C_k(\widehat{\mathfrak{gl}}_N,\widehat{\mathfrak{gl}}_{N-n})$
is generated by $E_{ij}(z)$ and $E^{(2)}_{ij}(z)$. But in the operator product of the currents
$E_{ij}(z)$ with $E_{ij}(w)$ one can find cubic currents $E^{(3)}_{ij}(z)$,
then quartic currents $E^{(4)}_{i,j}(z)$ and so on. The coset algebra
$\mathcal C_k(\widehat{\mathfrak{gl}}_N,\widehat{\mathfrak{gl}}_{N-n})$ is expected to be a factor of a quantization
of the universal enveloping algebra of the double current Lie algebra $\mathfrak{gl}_n\otimes\C[z_1^{\pm1},z_2]$, and then the
currents $E^{(m)}_{ij}(z)$ should correspond to the currents $E_{ij}(z_1)z_2^m$.

\medskip 

The case of $\mathcal C_k(\widehat{\mathfrak{gl}}_N,\widehat{\mathfrak{gl}}_{N-1})$ 
is the best studied and is known as the $W$-algebra associated to $\gl_k$. There exists a number of alternative constructions which produce the $W$-algebra, though in almost all cases, a rigorous proof of the identification is missing. 

For example, consider the algebra obtained by the quantum Drinfeld-Sokolov reduction of $\widehat{\mathfrak{gl}}_M$ with level $s$, followed by the analytic continuation with respect to $M$ \cite{FF}.
We follow the standard notation and denote the result by $W_{M,\frac1{s+M}}$. Then with this notation, we have
$\mathcal C_k(\widehat{\mathfrak{gl}}_N,\widehat{\mathfrak{gl}}_{N-1})\simeq
W_{k,\frac{N+k+1}{N+k+2}}$. This statement is non-trivial, the direct check is tedious and has not been done yet. 

There exists a dual coset construction of the algebra $\mathcal C_k(\widehat{\mathfrak{gl}}_N,\widehat{\mathfrak{gl}}_{N-1})$, where one takes
$\widehat{\mathfrak{gl}}_k$ of level $1$ times $\widehat{\mathfrak{gl}}_k$
of arbitrary level and considers the coset with respect to the diagonal embedding
of $\widehat{\mathfrak{gl}}_k$. 

There is an additional puzzling observation that the algebra 
$\mathcal C_k(\widehat{\mathfrak{gl}}_N,\widehat{\mathfrak{gl}}_{N-1})$
is isomorphic \textcolor{jimbo}{to}
the $W$-algebra constructed by the Drinfeld-Sokolov reduction from
\textcolor{jimbo}{the}
Lie superalgebra $\widehat{\mathfrak{gl}}(N|N-1)$.

\medskip

The algebra $\mathcal C_k(\widehat{\mathfrak{gl}}_N,\widehat{\mathfrak{gl}}_{N-n})$
depends on two parameters $N,k$, where $k$ is the level of $\widehat{\mathfrak{gl}}_N$.
The parameter $k$ is a complex number, while $N$ is natural. 
However, the structure constant in $\mathcal C_k(\widehat{\mathfrak{gl}}_N,\widehat{\mathfrak{gl}}_{N-n})$
depends on $N$ algebraically. Therefore, we can make the analytic
continuation with respect to $N$, then $N$ becomes an arbitrary complex number. 
The quantum toroidal algebra \textcolor{jimbo}{ $\mathcal E_n(q_1,q_2,q_3)$ }
is a quantization of the resulting algebra. 
Moreover, the conformal limit
of algebra $\mathcal E_n(q_1,q_2,q_3)$ coincides with $\mathcal C_k(\widehat{\mathfrak{gl}}_N,\widehat{\mathfrak{gl}}_{N-n})$.

In particular, the algebra corresponding to the $W$-algebra, 
\textcolor{jimbo}{$\mathcal E_1(q_1,q_2,q_3)$}, 
is the quantum toroidal algebra which has been most extensively studied. It is known as elliptic Hall algebra \cite{BS}, \cite{S}, \cite{SV2},
$(q,\gamma)$ analog of $\mathcal W_{1+\infty}$, \cite{M07},
an elliptic deformation of the $W$ algebra of type $\mathfrak{gl}$, Ding-Iohara algebra, \cite{FHHSY}, spherical Cherednik DAHA \cite{SV1}, quantum continuous $\mathfrak{gl}_\infty$, \cite{FFJMM1}, \cite{FFJMM2}.

\medskip

The coset construction has \textcolor{jimbo}{a} 
quantum group version.
Consider the quantum affine algebra $U_q(\widehat{\mathfrak{gl}}_N)$
with the subalgebra $U_q(\widehat{\mathfrak{gl}}_{N-1})$.
Then the problem is to find the commutant of $U_q(\widehat{\mathfrak{gl}}_{N-1})$
in $U_q(\widehat{\mathfrak{gl}}_N)$. This is a non-trivial question which we suggest to 
solve using the quantum toroidal algebras. 

Namely, one expects that there is an evaluation map
$\mathcal E_N(q_1,q_2,q_3)\rightarrow U_q(\widehat{\mathfrak{gl}}_N)$
where $q^2=q_2$ and the level of $\widehat{\mathfrak{gl}}_N$ depends on $q_1$ and $\kappa$.
Then on the level of quantum toroidal algebras, we find a homomorphism of algebras
$\varphi:\mathcal E_1\otimes\mathcal E_{N-1}\rightarrow\tilde{\mathcal E}_N$
where $\tilde{\mathcal E}_N$ is a suitable completion of $\mathcal E_N$.
In the Lie algebra limit $q_2 \to 1$ the map $\varphi$ becomes
very simple: it is just the map coming from the embedding 
\begin{align*}
\mathbb{M}_1\otimes\C[Z^{\pm 1},D^{\pm1}]\oplus
\mathbb{M}_{N-1}\otimes\C[Z^{\pm1},D^{\pm1}]
\rightarrow
\mathbb{M}_N\otimes\C[Z^{\pm1},D^{\pm1}].
\end{align*}
Note that on the other hand the conformal limit of $\varphi$ is rather non-trivial.

Combining $\varphi$ with the evaluation map, we obtain
\begin{align*}
\mathcal E_1\otimes\mathcal E_{N-1}\rightarrow U_q(\widehat{\mathfrak{gl}}_N).
\end{align*}
The image of the subalgebra $1\otimes\mathcal E_{N-1}$ is 
$U_q(\widehat{\mathfrak{gl}}_{N-1})$
and $\mathcal E_1\otimes 1$ is mapped to the commutant of
$U_q(\widehat{\mathfrak{gl}}_{N-1})$ in $U_q(\widehat{\mathfrak{gl}}_N)$.
Actually, we believe that the map of algebra $\mathcal E_1 \otimes1$ to the commutant is surjective, 
but we do not discuss this fact in the present paper.
Instead we concentrate on a family of irreducible representations of the algebra
$\mathcal E_N$ and study the restriction on the product
$\mathcal E_1\otimes\mathcal E_{N-1}$. In all  cases we consider, the multiplicities of irreducible representations of $\mathcal E_1\otimes\mathcal E_{N-1}$ appearing in irreducible representations of 
$\mathcal E_N$ are one.

\subsection{Motivation: geometry} The simplest integrable representation of $\mc E_n$ is called the  
Fock module, \cite{VV2},  \cite{STU}, \cite{FJMM1}, \cite{FJMM2}, \cite{S}. The Fock module appears in geometry in the following way. Consider the Hilbert scheme
$H_d$ of ideals of codimension $d$ in $\C[z_1,z_2]$. The plane $\C^2$ is equipped with an action of the torus  $\C^*\times\C^*$ via
$\alpha\times\beta:\ (z_1,z_2)\mapsto(\alpha z_1,\beta z_2)$ and of the cyclic group $\Z_p$ of order $p$
via $\zeta(z_1,z_2)=(\zeta z_1,\zeta^{-1}z_2)$, where $\zeta\in\C^*$ is a root of unity of order $p$. 
These actions induce the corresponding actions in $H_d$.

Let $H^{(p)}_d$ be the manifold of
the fixed points of $\Z_p$ in $H_d$. The manifold $H^{(p)}_d$ is smooth but not connected.
It is known that the quantum toroidal algebra $\mathcal E_p(q_1,q_2,q_3)$ acts
in the equivariant $K$-theory space $\mathcal F=\oplus_{d=0}^\infty K(H^{(p)}_d)$, where $q_1,q_2$ are the
equivariant parameters, by correspondences, see \cite{N}, \cite{FT}.
This representation of $\mc E_p$ is isomorphic to the Fock module.
Moreover, geometrically one observes the following remarkable phenomenon.

A basis in $\mathcal F$ is given by fixed points of $\C^*\times\C^*$ action.
This basis consists of eigenvectors of the Cartan subalgebra of $\mathcal E_p$.
If  $J\in H^{(p)}_d$ then $J\subset\C[z_1,z_2]$ is a homogeneous ideal
such that the quotient $\C[z_1,z_2]/J$ is a $d$-dimensional representation of $\Z_p$.
Irreducible representations of $\Z_p$ are all one dimensional, denote them
$\nu_0,\nu_1,\dots,\nu_{p-1}$. We call $J\in H^{(p)}_d$ of type $(a_0,\dots,a_{p-1})$ if 
$\C[z_1,z_2]/J = \oplus_{i=0}^{p-1} a_i\nu_i$.  Note that $a_0+\dots+a_{p-1}=d$.
Denote $H_{a_0,\dots,a_{p-1}}\subset H^{(p)}_d$ the set of ideals of type $(a_0,\dots, a_{p-1})$.

Then $H_{a_0,\dots,a_{p-1}}$ are exactly the connected components of $H^{(p)}_d$, and we have the geometric description of the weight decomposition of the Fock module: $\mathcal F=\oplus K(H_{a_1,\ldots,a_{p-1}})$.

The algebra $\mathcal E_p$ has a large group of automorphisms which is a toroidal version
of Lusztig braid group, see \cite{M99}. In particular, this group contains the root lattice of $\mathfrak{sl}_p$, which consists of the extensions of the affine translations to $\mc E_p$. This  
lattice is isomorphic to $\Z^{p-1}$ and it also acts in the Fock module $\mathcal F$.
Geometric description of the action of the braid group is non-trivial, but one can observe the following stabilization of manifolds.

The group $\Z^{p-1}$ acts
in the set of weights $\{(a_0,\ldots,a_{p-1})\}$. Fix some $A=(a_0,\ldots,a_{p-1})$ and let $T\in\Z^{p-1}$ be a generic element. Consider the sequence of manifolds $\mathcal M_s=H^{(p)}_{T^s A}$, $s=0,1,2,\dots$.
According to \cite{N}, for $s$ large enough the manifolds $\mathcal M_s$ are all isomorphic and
have a simple geometric description which can be described as follows.
Consider the quotient $(\C\times\C)/\Z_p$. It has the Kleinian singularity at the origin. Resolve this singularity and call the result $X_p$.
Then $X_p$ is a $2$-dimensional smooth manifold with a natural action of the torus $\C^*\times\C^*$.

For $s$ large enough the manifolds $\mathcal M_s$ is isomorphic
to a connected component of the Hilbert scheme of torsion free sheaves on $X_p$. The choice of the connected component corresponds to the choice of $(a_0,\ldots,a_{p-1})$.

On the other hand, the manifold $X_p$ has $p$ fixed points with respect to $\C^*\times\C^*$.
Therefore, on the Hilbert scheme of $X_p$ we have $p$ commuting actions 
of $\mc E_1$. The $i$-th action is given by correspondences with support in
the $i$-th point. Thus, in this limit of the Fock module, we observe an action of
$\mc E_1^{\otimes p}$.
One of the goals of this paper is to give a representation-theoretic explanation of this phenomenon.

Namely, the Cartan subalgebra of $\mc E_p$ is a commutative algebra generated by $\{K^\pm_i(z)\}$.
The fixed points are eigenvectors with respect to the operators $K^\pm_i(z)$.
Consider the operators $T^sK^\pm_i(z)$, $s=0,1,2,\dots$, acting in the Fock module. 
For $v\in\mc F$, we have  $T^sK^\pm_i(z) \cdot v=T^s \circ K^\pm_i(z)\circ T^{-s} v$.
For each $v\in\mc F$, for large enough $s$, the vector $T^sK^\pm_i(z) \cdot v$ does not depend on $s$.
The joint spectrum of the Cartan subalgebra is simple, so in the limit $s\to \infty$, we obtain 
a basis of the Fock module.

In addition, we construct an embedding 
\textcolor{jimbo}{$\mc E_1^{\otimes p}\to \widetilde{\mathcal{E}}_p$. }
Then the action of this subalgebra in the above basis recovers the geometric action.

\subsection{The plan of the paper and the main results}
Here is the outline of the paper.

We denote $\mc E_n$ the quantum toroidal algebra of type $\gl_n$.

Section \ref{toroidal-algebra} collects notation and basic facts about $\mc E_n$.
We discuss \textcolor{jimbo}{the}
defining relations in Section \ref{generators}, automorphisms in Section \ref{aut sec}, representation theory in Section \ref{rep sec}. In the literature, the cases $n\geq 3$ and $n=1$ usually appear separately, while $n=2$ is often omitted. We manage to write all formulas in a uniform way.

Section \ref{subalgebras} contains the construction and \textcolor{jimbo}{the}
properties of $\mc E_m$ inside a suitable completion of $\mc E_n$, $m<n$. 
The main construction is  described in Section \ref{fused current}, 
it defines fused currents via a quantum version of \textcolor{jimbo}{the}
operator product 
expansion. Then we prove our first main results, 
Theorem \ref{sub alg}, see Section \ref{relations} and 
Theorem \ref{commute}, see Section \ref{commute sec}. Theorem \ref{sub alg} establishes that the fused currents do satisfy the relations of the quantum toroidal $\gl_m$, and Theorem \ref{commute} proves that \textcolor{jimbo}{the}
upper left corner and \textcolor{jimbo}{the}
bottom right corner subalgebras $\mc E_n$ and $\mc E_m$  commute within $\mc E_{m+n}$. Our main method is the study of correlation functions, we develop the techniques in Section \ref{cor func}. In Section \ref{classical} we describe the Lie algebra limit of $\mc E_n$ and the meaning of our construction in this limit.

Section \ref{branching sec} is devoted to the study of the modules over $\mc E_{m+n}$ after restriction to $\mc E_m\otimes\mc E_n$. We write the formulas mainly in the case of $n=1$. We give all details in the case of the Fock module and $n=1$ to explain the approach and the logic of the proofs, see Section \ref{fock branch sec}. Then we proceed to tensor products of Fock modules and their irreducible submodules. 
The main results are Theorem \ref{Fock n thm}, Theorem \ref{k Fock n thm} and Theorem \ref{N thm}. These theorems explicitly describe decompositions of various modules. We conclude with a conjectural formula for the decomposition of the so called Macmahon module.

\section{Quantum toroidal algebras}\label{toroidal-algebra}
In this section we introduce our notation concerning 
the quantum toroidal algebra of type $\gl_n$.
We also recall its basic features relevant to the present text. 

\subsection{Generators and relations}\label{generators}
Let $n$ be a natural number. We shall write $a\equiv b$ for $a\equiv b\bmod n$. 
Let $(a_{i,j})_{i,j=0}^{n-1}$ be the Cartan matrix  of type $A^{(1)}_{n-1}$, 
and let $(m_{i,j})_{i,j=0}^{n-1}$ be a skew-symmetric matrix defined by 
$m_{i+1,i}=1$  and $m_{i,j}=0$ if $i\not\equiv j\pm1$, 
where the suffix is to be read modulo $n$. 

Fix non-zero complex numbers $d,q$. 
Throughout the text we shall use the parameters
\begin{align*}
& q_1=d q^{-1},\ q_2=q^2,\ q_3=d^{-1}q^{-1}\,,
\end{align*}
so that $q_1q_2q_3=1$. We assume further that for $n_1,n_2,n_3\in\Z$
\begin{align*}
\text{$q_1^{n_1}q_2^{n_2}q_3^{n_3}=1$ holds only if $n_1=n_2=n_3$}.
\end{align*}
In particular,  none of the $q_i$ is a root of unity.

The {\it quantum toroidal algebra} of type $\gl_n$,  which we denote 
$\mathcal{E}_n$, 
is an associative 
unital $\C$-algebra defined by generators and relations to be given below. 

The algebra $\mathcal{E}_n$ has generators 
\begin{align*}
E_{i,k},\ F_{i,k},\ H_{i,r},\ K_i^{\pm1},\ q^{\pm c} \quad  
(i\in \Z/n\Z,\ k\in\Z,\  
r\in\Z/\{0\}).
\end{align*}
In order to write down the defining relations, 
introduce the generating series
\begin{align*}
E_i(z) =\sum_{k\in \Z}E_{i,k}z^{-k}, \quad 
F_i(z) =\sum_{k\in\Z}F_{i,k}z^{-k}, \quad
K_i^{\pm}(z) = K_i^{\pm 1} \exp(\pm(q-q^{-1})\sum_{r=1}^\infty H_{i,\pm r}z^{\mp r})\,.
\end{align*}
Define further $g_{i,j}(z,w)$ by
\begin{align*}
n\ge 3&:\quad
g_{i,j}(z,w)=\begin{cases}
              z-q_2w & (i\equiv j),\\
	      z-q_1w & (i\equiv j-1),\\
	      z-q_3w & (i\equiv j+1),\\
              z-w & (i\not\equiv j,j\pm1).\\
	     \end{cases}\\
n=2&:\quad 
 g_{i,j}(z,w)=\begin{cases}
	      z-q_2w & (i\equiv j),\\
              (z-q_1w)(z-q_3w)& (i\not\equiv j).
	     \end{cases}\\
n=1&:\quad 
 g_{0,0}(z,w)=(z-q_1w)(z-q_2w)(z-q_3w).
\end{align*}
and 
\begin{align*}
d_{i,j}=
\begin{cases}
d^{\mp 1} &  (i\equiv j\mp1, n\ge 3),   \\
-1& (i\not\equiv j, n=2), \\
1 & (\text{otherwise}).\\
\end{cases}
\end{align*}
Notation being as above, the defining relations for 
 $\mathcal{E}_n$
\footnote{
We have slightly changed the notation from \cite{FJMM2}. 
The generators $K^{\pm}_i(z)$, $H_{i,r}$ here 
correspond to $K^{\pm}_i(q^{-c/2}z)$, $q^{rc/2}H_{i,r}$ there respectively.
\textcolor{4.2}{For $n=1$, see Remark 2 in Section 2.2}
}
are as follows:
\begin{gather*}
K_i K^{-1}_i = K^{-1}_i K_i=1, \\
\text{$q^{\pm c}$ are central},\quad 
q^{c}q^{-c}=q^{-c}q^{c}=1\,,\\
K^\pm_i(z)K^\pm_j (w) = K^\pm_j(w)K^\pm_i (z), 
\\
\frac{g_{i,j}(q^{-c}z,w)}{g_{i,j}(q^cz,w)}
K^-_i(z)K^+_j (w) 
=
\frac{g_{j,i}(w,q^{-c}z)}{g_{j,i}(w,q^cz)}
K^+_j(w)K^-_i (z),
\\
d_{i,j}g_{i,j}(z,w)K_i^\pm(q^{(1\mp1)c/2}z)E_j(w)+g_{j,i}(w,z)E_j(w)K_i^\pm(q^{(1\mp1) c/2}z)=0,
\\
d_{j,i}g_{j,i}(w,z)K_i^\pm(q^{(1\pm1)c/2}z)F_j(w)+g_{i,j}(z,w)F_j(w)K_i^\pm(q^{(1\pm1) c/2}z)=0\,,
\end{gather*}
\begin{gather*}
[E_i(z),F_j(w)]=\frac{\delta_{i,j}}{q-q^{-1}}
(\delta\bigl(q^c\frac{w}{z}\bigr)K_i^+(z)
-\delta\bigl(q^c\frac{z}{w}\bigr)K_i^-(w)),\\
d_{i,j}g_{i,j}(z,w)E_i(z)E_j(w)+g_{j,i}(w,z)E_j(w)E_i(z)=0, \\
d_{j,i}g_{j,i}(w,z)F_i(z)F_j(w)+g_{i,j}(z,w)F_j(w)F_i(z)=0.\\
\end{gather*}
In addition we impose the Serre relations as follows.
We use the notation $[A,B]_p=AB-pBA$.
\medskip

\noindent  For $n\ge 3$,
\be
&[E_i(z),E_j(w)]=0, \quad [F_i(z),F_j(w)]=0\quad (i\neq j,j\pm 1),\\
&\mathop{\mathrm{Sym}}_{z_1,z_2}
[E_i(z_1),[E_i(z_2),E_{i\pm1}(w)]_q]_{q^{-1}}=0\,,
\\
&\mathop{\mathrm{Sym}}_{z_1,z_2}
[F_i(z_1),[F_i(z_2),F_{i\pm1}(w)]_q]_{q^{-1}}=0\,.
\en
\medskip

\noindent  For $n=2$, $i\not \equiv j$,
\bea
\mathop{\mathrm{Sym}}_{z_1,z_2,z_3}
\bigl[E_i(z_1),\bigl[E_i(z_2),\bigl[E_i(z_3),E_j(w)\bigr]_{q^2}\bigr]\bigr]_{q^{-2}}=0\,,
\label{quartic1}\\
\mathop{\mathrm{Sym}}_{z_1,z_2,z_3}
\bigl[F_i(z_1),\bigl[F_i(z_2),\bigl[F_i(z_3),F_j(w)\bigr]_{q^2}\bigr]\bigr]_{q^{-2}}=0\,.
\label{quartic2}
\ena

\medskip

\noindent  For $n=1$,
\begin{gather*}
\mathop{\mathrm{Sym}}_{z_1,z_2,z_3}z_2z_3^{-1}[E_0(z_1),[E_0(z_2),E_0(z_3)]]=0\,,\\
\mathop{\mathrm{Sym}}_{z_1,z_2,z_3}z_2z_3^{-1}[F_0(z_1),[F_0(z_2),F_0(z_3)]]=0\,.
\end{gather*}
In the above, $\mathrm{Sym}_{z_1,\cdots,z_s}$ stands for the symmetrization in $z_1,\cdots,z_s$. 
\medskip

\subsection{Some technical points} In this subsection we give a few remarks about the relations in $\mc E_n$,
which are important for this work.

It is convenient to rewrite the
relations involving $K^{\pm}_i(z)$ in terms of the generators $\{H_{i,r}\}$. 
Let $[x]=(q^x-q^{-x})/(q-q^{-1})$. 

First of all, we have
\begin{align*}
K_iE_j(z)K^{-1}_i=q^{a_{i,j}}E_j(z),\ K_iF_j(z)K^{-1}_i=q^{-a_{i,j}}F_j(z).
\end{align*}
The other relations are as follows.

For $n\ge 3$, 
\begin{align*}
&[H_{i,r},E_j(z)]= \frac{[r a_{i,j}]}{r} d^{-r m_{i,j}}
q^{(r-|r|)c/2}\,z^r E_j(z)\,,\\
&[H_{i,r},F_j(z)]=-\frac{[r a_{i,j}]}{r} d^{-r m_{i,j}}
q^{(r+|r|)c/2}\,z^r F_j(z)\,,\\
&[H_{i,r},H_{j,s}]=\delta_{r+s,0}\frac{[r a_{i,j}]}{r}
\frac{q^{rc}-q^{-rc}}{q-q^{-1}}  
d^{-r m_{i,j}}\,.\end{align*}

For $n=2$, 
\begin{align*}
&[H_{i,r},E_j(z)]=a_{i,j}(r) z^r E_j(z)q^{(r-|r|)c/2}\,,
\\
&[H_{i,r},F_j(z)]=-a_{i,j}(r)z^r F_j(z)q^{(r+|r|)c/2}\,,
\\
&[H_{i,r},H_{j,s}]=
\delta_{r+s,0}\,a_{i,j}(r)\frac{q^{rc}-q^{-rc}}{q-q^{-1}},
\end{align*}
where $a_{i,i}(r)=[r](q^r+q^{-r})/r$, 
 $a_{i,j}(r)=-[r](d^r+d^{-r})/r$ ($i\neq j$).
 
For $n=1$, 
\begin{align*}
&[H_{0,r},E_0(z)]=z^rb(r) E_0(z)q^{(r-|r|)c/2},
\\
&[H_{0,r},F_0(z)]=-z^rb(r) F_0(z)q^{(r+|r|)c/2},
\\
&[H_{0,r},H_{0,s}]=\delta_{r+s,0}\,b(r)
\frac{q^{rc}-q^{-rc}}{q-q^{-1}},
\end{align*}
where $b(r)=[r](q^r+q^{-r}-d^r-d^{-r})/r$.
\medskip

The following elements of $\mathcal{E}_n$ are central, 
\begin{align*}
\kappa= K_0\cdots K_{n-1}\,,
\quad   q^{c}\,.
\end{align*}

The algebra $\mathcal{E}_n$ is $\Z^{n}\times \Z$-graded by the 
degree assignment
\bea\label{grading}
&\mathrm{deg}\,E_{i,k}=(1_i,k)\,,
\quad 
\mathrm{deg}\,F_{i,k}=(-1_i,k)\,,
\quad
\mathrm{deg}\,H_{i,r}=(0,r)\,,
\\
&
\mathrm{deg}\,K_i=\mathrm{deg}\,q^{c}=(0,0), \notag 
\ena
where $1_i=(0,\cdots,\overset{i-th}{1},\cdots,0)\in \Z^n$.
For a homogeneous element $x\in\mathcal{E}_n$ with 
$\mathrm{deg}\,x=(d_0,\cdots,d_{n-1},k)$, we set 
$\mathrm{pdeg}\,x=\sum_{i=0}^{n-1}d_i$ and call it the 
{\it principal degree}. 
We have 
\bea\label{princ deg}
&\mathrm{pdeg}\,E_{i,k}=1\,,
\quad
\mathrm{pdeg}\,F_{i,k}=-1\,,
\quad
\mathrm{pdeg}\,H_{i,r}=0\,.
\ena
\mdf{
In Section 4 we use the classical weight of a homogeneous element
\bea\label{weight}
{\rm cweight}\, x=\sum_{i=1}^{n-1}(d_i-d_0)\al_i
\ena
where $\al_i$ $(i=1,\ldots,n-1)$ are the $\mathfrak{sl}_n$ roots.
}

The algebra $\mathcal{E}_n$ has also a formal coproduct
\begin{align*}
&\Delta E_i(z)=E_i(z)\otimes 1+K_i^-(C_1 z)\otimes E_i(C_1 z)\,,
\\
&\Delta F_i(z)=F_i(C_2 z)\otimes K_i^+(C_2 z)+1\otimes F_i(z)\,,
\\
&\Delta K^{+}_i(z)=K_i^+(z)\otimes K^+_i(C_1^{-1} z)\,,
\\
&\Delta K^{-}_i(z)=K_i^-(C_2^{-1} z)\otimes K^-_i(z)\,,
\\
&\Delta\, q^{c}=q^{c}\otimes q^{c}\,,
\end{align*}
where we have set $C_1=q^{c}\otimes 1$
and $C_2=1\otimes q^{c}$. 
Since the right hand side contains an infinite sum of generators, 
these formulas are not a coproduct in the usual sense.
Nevertheless for a certain class of modules 
they can be used to define a module structure on 
tensor products. 
For the
details see \cite{FJMM1},\cite{FJMM2}.

In the sequel, when necessary we shall exhibit the dependence on $q_i$ explicitly 
and write $\mathcal{E}_n$ as $\mathcal{E}_n(q_1,q_2,q_3)$.  
\medskip

\noindent{\it Remark 1.}\quad
The definition of the quantum toroidal algebra with $n\geq 3$ is due to \cite{GKV}. 
Our presentation of $\mathcal{E}_n$ ($n\ge 3$) 
follows closely the one given in \cite{TU}. 

To the authors' knowledge, 
the algebra $\mathcal{E}_1$ has been 
introduced for the first time in \cite{BS}, 
where it was termed the elliptic Hall algebra.  
Subsequently the same algebra has been rediscovered by other authors. 
In \cite{M07} it was called a $(q,\gamma)$ analog of 
 $\mathcal{W}_{1+\infty}$, 
and in \cite{FHHSY} it was called Ding-Iohara algebra.  
In \cite{FFJMM1}, \cite{FFJMM2} we called it 
``quantum continuous $\mathfrak{gl}_\infty$''.
\medskip

\noindent{\it Remark 2.}\quad
In our previous paper \cite{FJMM1} we have used an algebra which is an extension of
$\mathcal{E}_1$ by an additional central element.  
The correspondence of the notation in \cite{FJMM1} and the present
paper is 
$e(z)=(1/(1-q_1)) E_0(z)$, $f(z)=-(q^{-1}/(1-q_3))\mathfrak{c} F_0(z)$, 
$\psi^{\pm}(z)=\mathfrak{c} K_0^{\pm}(q^{c}z)$,
where 
$\mathfrak{c}$ is an extra central element. 
In the generators $e(z),f(z),\psi^\pm(z)$, the defining relations are completely symmetric in 
the parameters $q_1,q_2,q_3$. 
Hence $\mathcal{E}_1(q_{\pi(1)},q_{\pi(2)},q_{\pi(3)})=\mathcal{E}_1(q_1,q_2,q_3)$
for any permutation $\pi$ of $\{1,2,3\}$. 
In contrast, in the case $n\ge 2$ the $q_1\leftrightarrow q_3$ symmetry 
holds true, the map is given by \Ref{iota} below, but $q_2$ plays a distinguished role. 
\medskip

\noindent{\it Remark 3.}\quad
We have not been able to find the Serre relations for $\mathcal{E}_2$ 
in the literature, 
except \cite{M01} where the special case $d=q$ is treated. Our quartic relations are similar to that of \cite{M01}.

\medskip

 For $\mathcal E_2$ we also have cubic relations inspired by the consideration of 
`fused currents' which will be discussed in Section \ref{fused current} and Theorem \ref{sub alg}. 
These cubic relations are not discussed in \cite{M01}. As we show these cubic relations are equivalent to the quartic Serre relations \Ref{quartic1}, \Ref{quartic2} in the presence of quadratic relations.

\begin{lem}\label{cubic lem} In $\mc E_2$ we have the following cubic relations:
\begin{gather*}
\mathop{\mathrm{Sym}}_{z_1,z_2}\Bigl[
q_1(z_1-q_3w)(z_2-q_3w)E_i(z_1)E_i(z_2)E_j(w)-
(1+q_2^{-1})(z_1-q_3w)(q_1z_2-w)E_i(z_1)E_j(w)E_i(z_2)\\
+q_3(q_1z_1-w)(q_1z_2-w)E_j(w)E_i(z_1)E_i(z_2)\Bigr]=0\,,\\
\mathop{\mathrm{Sym}}_{z_1,z_2}\Bigl[
q_3(q_1z_1-w)(q_1z_2-w)F_i(z_1)F_i(z_2)F_j(w)-
(1+q_2^{-1})(q_1z_1-w)(z_2-q_3w)F_i(z_1)F_j(w)F_i(z_2)\\
+q_1(z_1-q_3w)(z_2-q_3w)F_j(w)F_i(z_1)F_i(z_2)\Bigr]=0\,,
\end{gather*}
and the relations obtained by interchanging $q_1$ with $q_3$. 
\end{lem}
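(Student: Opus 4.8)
The plan is to derive the cubic relations directly from the quartic Serre relations \Ref{quartic1}, \Ref{quartic2} together with the quadratic relations $d_{i,j}g_{i,j}(z,w)E_i(z)E_j(w)+g_{j,i}(w,z)E_j(w)E_i(z)=0$, and then invoke the $q_1\leftrightarrow q_3$ symmetry (the map \Ref{iota}) to get the remaining relations. I will work only with the $E$-currents; the $F$-case follows by the same computation after applying the anti-automorphism swapping $E$ and $F$, or by repeating the argument verbatim with the $g_{j,i}/g_{i,j}$ roles exchanged as in the defining relations.

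First I would record the quadratic relation for $n=2$ in usable form: for $i\not\equiv j$ we have $g_{i,j}(z,w)=(z-q_1w)(z-q_3w)$ and $d_{i,j}=-1$, so the relation reads $(z-q_1w)(z-q_3w)E_i(z)E_j(w)=(w-q_1z)(w-q_3z)E_j(w)E_i(z)$; and $g_{i,i}(z,w)=z-q_2w$ with $d_{i,i}=1$ gives $(z-q_2w)E_i(z)E_i(w)=-(w-q_2z)E_i(w)E_i(z)$, i.e.\ the usual exchange relation for two equal-color currents. The second of these lets me symmetrize freely: in any $\mathrm{Sym}_{z_1,z_2}$ expression I may replace $E_i(z_1)E_i(z_2)$ by a symmetric combination using the $(z_1-q_2z_2)$-kernel. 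The strategy is then: start from the quartic expression $\mathrm{Sym}_{z_1,z_2,z_3}[E_i(z_1),[E_i(z_2),[E_i(z_3),E_j(w)]_{q^2}]]_{q^{-2}}$, use the quadratic $E_i$–$E_j$ relation to move the single $E_j(w)$ through one of the three $E_i$ factors, and show that the resulting rational-function identity factors through (a symmetrization of) the proposed cubic expression times an extra factor of the form $(z_3-q_1w)(z_3-q_3w)$ or similar; the symmetrization over $z_3$ (now decoupled) should then force the cubic bracket itself to vanish, since the joint spectral / nondegeneracy assumptions on the $q_i$ (no nontrivial multiplicative relations) prevent the extra polynomial factor from being a zero divisor in the relevant completion.

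Concretely, the key algebraic step is the identity of rational functions: one expands the symmetrized cubic polynomial
\[
q_1(z_1-q_3w)(z_2-q_3w)XYW-(1+q_2^{-1})(z_1-q_3w)(q_1z_2-w)XWY+q_3(q_1z_1-w)(q_1z_2-w)WXY
\]
(with $X=E_i(z_1),Y=E_i(z_2),W=E_j(w)$), uses the two-term quadratic relations to reorder $W$ to one side in each monomial, and checks that the coefficient of each of the six orderings of $\{E_i(z_1),E_i(z_2),E_j(w)\}$ matches, after multiplication by the common denominator $(z_1-q_1w)(z_2-q_1w)$ or $(z_1-q_1w)(z_2-q_3w)$ as appropriate, the corresponding coefficient extracted from the quartic relation with the fourth current specialized. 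I expect the cleanest route is actually the converse-friendly one: show the cubic relations $\Rightarrow$ quartic (by symmetrizing the cubic relation against an extra $E_i(z_3)$ using the quadratic $E_iE_i$ relation and collecting), and separately show quartic $+$ quadratic $\Rightarrow$ cubic, establishing the claimed equivalence; for the lemma as stated only the latter direction is needed, but the former is the natural sanity check.

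The main obstacle is the bookkeeping: one must track six noncommuting monomials in three currents, each multiplied by degree-two polynomials in $(z_1,z_2,w)$, reduce everything to a single chosen ordering using the quadratic relations (which introduces the denominators $(z-q_1w)(z-q_3w)$), and then verify a polynomial identity in three variables. The subtle point is justifying that the extra polynomial factor that appears when passing between the cubic and quartic forms may be canceled — this uses that $E_i(z)$ has no "poles" obstructing multiplication by $(z-q_iw)$ in the formal-series / correlation-function setting, which is exactly the kind of nondegeneracy controlled by the techniques of Section \ref{cor func}; I would phrase the argument as an identity of formal power series coefficients, where cancellation of a nonzero scalar polynomial factor is automatic, rather than as manipulation of rational functions. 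A secondary point is handling the $q_2^{-1}=q^{-2}$ coefficient consistently with the $[\,\cdot,\cdot\,]_{q^{\pm2}}$ brackets in \Ref{quartic1}; expanding those brackets produces exactly the combination $(1+q_2^{-1})$ in the middle term, which is the telltale sign the two formulations agree.
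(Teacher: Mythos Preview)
Your approach has a genuine gap at the crucial step. You propose to pass from the quartic relation \Ref{quartic1} (four $E$ factors) to the cubic relation (three $E$ factors) using only the quadratic $E$--$E$ relations. But the quadratic relations merely reorder currents and multiply by rational functions; they never decrease the total number of $E$ factors. Thus a relation of degree four in the $E$'s cannot by itself imply a relation of degree three --- your suggestion that the quartic expression should ``factor through the cubic expression times an extra factor'' involving $E_i(z_3)$, with the $z_3$ part then ``decoupled'' and cancellable, does not work: after any sequence of quadratic reorderings the operator $E_i(z_3)$ is still present in every monomial, and there is no mechanism to strip it off.

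The paper's proof supplies exactly the missing degree-lowering step: one takes the component form $[E_{i,k},[E_{i,k},[E_{i,k},E_{j,l}]_{q^2}]]_{q^{-2}}=0$ and commutes it with $F_{i,\pm1-k}$. Via the relation $[E_i(z),F_i(w)]=(q-q^{-1})^{-1}\bigl(\delta(q^cw/z)K_i^+(z)-\delta(q^cz/w)K_i^-(w)\bigr)$, one copy of $E_{i,k}$ is traded for Cartan-type terms, yielding a genuine cubic identity
\[
(1+q_2^{-1})[E_{i,k},[E_{i,k\pm1},E_{j,l}]_{q^{-2}}]
=(q_1+q_3)[[E_{j,l\pm1},E_{i,k}]_{q^{-2}},E_{i,k}]\,.
\]
Only after this reduction does one bring in commutators with $H_{i,r}$ (to spread the indices and obtain the symmetrized current form) and the quadratic relation $\mathrm{Sym}_{z,w}(z-q^2w)E_i(z)E_i(w)=0$ (to massage the result into the stated cubic relation). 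So the quadratic $E$--$E$ relation does enter, but only at the final rewriting stage; the essential new input is the commutator with $F$. Your plan for the $q_1\leftrightarrow q_3$ companion relations and for the $F$-current version is fine once this step is in place.
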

\begin{proof}
Starting from the special case of the quartic relation
\begin{align*}
[E_{i,k},[E_{i,k},[E_{i,k},E_{j,l}]_{q^2}]]_{q^{-2}}=0\,,
\end{align*}
we compute the commutator with $F_{i,\pm1-k}$. 
The result is 
\begin{align*}
&(1+q_2^{-1})[E_{i,k},[E_{i,k\pm1},E_{j,l}]_{q^{-2}}]
=(q_1+q_3)[[E_{j,l\pm1},E_{i,k}]_{q^{-2}},E_{i,k}]\,.
\end{align*}
Taking commutators with $H_{i,r}$ we obtain
\begin{align*}
 \mathrm{Sym}_{k_1,k_2}
\left((1+q_2^{-1})
[E_{i,k_1},[E_{i,k_2\pm1},E_{j,l}]_{q^{-2}}]
-(q_1+q_3)[[E_{j,l\pm1},E_{i,k_1}]_{q^{-2}},E_{i,k_2}]
\right)=0\,,
\end{align*}
or in current form
\begin{align*}
 \mathrm{Sym}_{z_1,z_2}
\left((1+q_2^{-1})z_2^{\pm1}
[E_i(z_1),[E_{i}(z_2),E_{j}(w)]_{q^{-2}}]
-(q_1+q_3)w^{\pm1}[[E_{j}(w),E_{i}(z_1)]_{q^{-2}},E_{i}(z_2)]
\right)=0\,.
\end{align*}
Modulo the quadratic relation 
$\mathrm{Sym}_{z,w}(z-q^2w)E_{i}(z)E_i(w)=0$, 
these equations are equivalent to the first identity in the lemma.  
\end{proof}

In fact the quadratic relations
\begin{align*}
 (z-q_1w)(z-q_3w)E_i(z)E_j(w)=(w-q_1z)(w-q_3z)E_j(w)E_i(z).
\end{align*}
with $i\neq j$ also follow from the quartic relations.

\medskip

On the other hand,  the quartic Serre relations ,
are consequences of the quadratic and cubic relations, see 
the part of Section \ref{cor func} concerning the Serre relations.

\subsection{Horizontal and vertical subalgebras}\label{embedding}
In this subsection we describe subalgebras of $\E_n$ isomorphic to 
the quantum affine algebras $U_q(\widehat{\mathfrak{sl}}_n)$ and $U_q(\widehat{\mathfrak{gl}}_n)$
\mdf{for $n\geq2$.}


The algebra 
$U_q(\widehat{\mathfrak{sl}}_n)$ 
has a presentation in terms of the Chevalley generators 
$\{e_i,f_i,t_i^{\pm1}\}$, $0\le i\le n-1$, as follows.
\begin{align*}
&t_it_j=t_jt_i,\quad t_it_i^{-1}=t_i^{-1}t_i=1,\\ 
&t_ie_jt_i^{-1}=q^{a_{i,j}}e_j, \quad t_if_jt_i^{-1}=q^{-a_{i,j}}f_j, 
\\
&[e_i,f_j]=\delta_{i,j}\frac{t_i-t_i^{-1}}{q-q^{-1}},
\\
&[e_i,e_j]=0,\quad [f_i,f_j]=0\quad (\text{if $a_{i,j}=0$}),
\\
&[e_i,[e_i,e_j]_{q^{-1}}]_q=0,
\quad [f_i,[f_i,f_j]_{q}]_{q^{-1}}=0
\quad (\text{if $a_{i,j}=-1$})\,.
\end{align*}
When $n=2$, the last line is to be replaced by
\begin{align*}
 &[e_i,[e_i,[e_i,e_j]_{q^{-2}}]_1]_{q^2}=0,
\quad [f_i,[f_i,[f_i,f_j]_{q^{2}}]_1]_{q^{-2}}=0
\quad (i\neq j).
\end{align*}

Alternatively, $U_q(\widehat{\mathfrak{sl}}_n)$ has a presentation in terms of the Drinfeld generators 
$\{x^{\pm}_{i,l}, h_{i,r}, k_i^{\pm1},q^{\pm c}\}$, 
$1\le i\le n-1$, $l\in\Z$, $r\in\Z\backslash\{0\}$
with the relations
\begin{align*}
& \mdf{k_ik_i^{-1}=k_i^{-1}k_i=1,}\quad q^{c}q^{-c}=q^{-c}q^{c}=1,\\
&\text{$q^{\pm c}$ \mdf{are} central},\quad [k_i,k_j]=[k_i,h_{j,r}]=0\,,
\\
&[h_{i,r},h_{j,s}]=\delta_{r+s,0}\frac{[r a_{i,j}]}{r}\frac{q^{rc}-q^{-rc}}{q-q^{-1}}\,,
\\
&k_ix^{\pm}_{j,l}k_i^{-1}=q^{\pm a_{i,j}}x^\pm_{j,l},\quad
[h_{i,r},x^{\pm}_{j,l}]=\pm\frac{[ra_{i,j}]}{r}q^{(r\mp|r|)c/2}x^\pm_{j,l+r},\\
&[x^+_{i,k},x^-_{j,l}]=\frac{\delta_{i,j}}{q-q^{-1}}\left(q^{-lc}\phi^+_{i,k+l}
-q^{-kc}\phi^-_{i,k+l}\right)\,,
\\
&[x^\pm_{i,k+1},x^{\pm}_{i,l}]_{q^{\pm2}}  +[x^\pm_{i,l+1},x^{\pm}_{i,k}]_{q^{\pm2}}  =0,
\\
&[x^\pm_{i,k},x^\pm_{j,l}]=0\quad (\text{if $a_{i,j}=0$}),\\
&[x^\pm_{i,k+1},x^\pm_{j,l}]_{q^{\mp1}}+[x^\pm_{j,l+1},x^\pm_{i,k}]_{q^{\mp1}}=0\quad
(\text{if $a_{i,j}=-1$}),
\\
&\mathrm{Sym}_{k_1,k_2}
[x^{\pm}_{i,k_1},[x^{\pm}_{i,k_2},x^{\pm}_{j,l}]_{q^{-1}}]_{q}=0
\quad (\text{if $a_{i,j}=-1$})\,.
\end{align*}
In the above we have set
\begin{align*}
\sum_{\pm k\ge0}\phi^{\pm}_{i,k}z^{-k}=k_i^{\pm1}
\exp\Bigl(\pm(q-q^{-1})\sum_{\pm r>0}h_{i,r}z^{-r}\Bigr)\,.
\end{align*}
We choose the correspondence of these two generators as follows.
\begin{align*}
&e_i=x^+_{i,0},\quad f_i=x^-_{i,0},\quad t_i=k_i\quad (1\le i\le n-1),
\quad t_0t_1\cdots t_{n-1}=q^c,
\\
&e_0=q^c(k_1\cdots k_{n-1})^{-1}[\cdots [x^-_{1,1},x^-_{2,0}]_q,\cdots,x^-_{n-1,0}]_q\,,
\\
&f_0=[x^+_{n-1,0},\cdots [x^+_{2,0},x^+_{1,-1}]_{q^{-1}},\cdots]_{q^{-1}}
k_1\cdots k_{n-1}q^{-c}\,.
\end{align*}
\mdf{In order to express the Drinfeld generators in terms of
the Chevalley generators, it is useful to have the formulas:
\begin{align*}
&x^-_{i,1}=(-1)^{i-1}(t_0\cdots t_{i-1}t_{i+1},\cdots t_{n-1})^{-1}
[\cdots[e_0,e_{n-1}]_{q^{-1}}\cdots,e_{i+1}]_{q^{-1}},e_1]_{q^{-1}}\cdots,e_{i-1}]_{q^{-1}},\\
&h_{i,1}=(-1)^i[\cdots[e_0,e_{n-1}]_{q^{-1}}\cdots,e_{i+1}]_{q^{-1}},e_1]_{q^{-1}}
\cdots,e_{i-1}]_{q^{-1}},e_i]_{q^{-2}},\\
&x^+_{i,-1}=(-1)^{i-1}[f_{i-1},\cdots[f_1,[f_{i+1},\cdots[f_{n-1},f_0]_q\cdots]_q
t_0\cdots t_{i-1}t_{i+1},\cdots t_{n-1},\\
&h_{i,-1}=(-1)^i[f_i,[f_{i-1},\cdots[f_1,[f_{i+1},\cdots[f_{n-1},f_0]_q\cdots]_q]_{q^2}.
\end{align*}
}

A characteristic feature of the algebra $\mathcal{E}_n$  is that for $n\ge 2$ it admits 
two different embeddings of $U_q(\widehat{\mathfrak{sl}}_n)$, 
\begin{align*}
 h,v: U_q(\widehat{\mathfrak{sl}}_n)\longrightarrow 
\mathcal{E}_n.
\end{align*}
The embedding $h$ is defined in terms of the Chevalley generators, 
\begin{align*}
h:~ e_i\mapsto E_{i,0},\quad f_i\mapsto F_{i,0},\quad
t_i\mapsto K_{i}\quad (0\le i\le n-1).
\end{align*}
The embedding $v$ is defined in terms of the Drinfeld generators, 
\begin{align*}
v&:~ x^+_{i,k}\mapsto  d^{ik}E_{i,k},\quad 
 x^-_{i,k}\mapsto d^{ik}F_{i,k},\quad 
k_i\mapsto K_{i},\quad 
h_{i,r}\mapsto d^{ir}H_{i,r},
 \quad q^{c}\mapsto q^{c}\\
&(1\le i\le n-1, k\in\Z, r\in\Z\backslash\{0\}).
\end{align*}
We call $h$  the {\it horizontal embedding}, and its image 
$h\bigl(U_q\widehat{\mathfrak{sl}}_n\bigr)$ 
the {\it horizontal subalgebra} of $\mathcal{E}_n$. 
Similarly we call $v$ the {\it vertical embedding}, and its image
$v\bigl(U_q\widehat{\mathfrak{sl}}_n\bigr)$ the {\it vertical subalgebra}
of $\mathcal{E}_n$. We denote the horizontal and vertical subalgebras by
$U^{hor}_q(\widehat{\mathfrak{sl}}_n)$ and $U^{ver}_q(\widehat{\mathfrak{sl}}_n)$ respectively.
\mdf{
Note that if $x\in U^{hor}_q(\widehat{\mathfrak{gl}}_n)$ then $\deg x\in\Z^n\times\{0\}$, and if
$x\in U^{ver}_q(\widehat{\mathfrak{gl}}_n)$ then $\deg x\in\{0\}\times\Z^{n-1}\times\Z$.
\textcolor{jimbo}{Note also that}
$\E_n$ is generated by the union of
$U^{hor}_q(\widehat{\mathfrak{sl}}_n)$ and $U^{ver}_q(\widehat{\mathfrak{sl}}_n)$.
}

As it is pointed out in \cite{FJMM2}, there are also 
Heisenberg subalgebras commuting with these subalgebras. 
For each $r\neq 0$, let $\{c_{i,r}\}_{i=0}^{n-1}$ be a non-trivial solution of the equation
\begin{align*}
\sum_{i=0}^{n-1}c_{i,r}[r a_{i,j}] d^{-r m_{i,j}}=0
\quad (j=1,\ldots,n-1).
\end{align*}
Let $\mathfrak{a}^{ver}$ be the subalgebra of $\mathcal{E}_n$ 
generated by $H^{ver}_r=\sum_{i=0}^{n-1}c_{i,r}H_{i,r}$, $r\in\Z_{\neq0}$. 
Clearly $\mathfrak{a}^{ver}$ is a Heisenberg subalgebra with central element $q^c$ which
commutes with $U_q^{ver}\widehat{\mathfrak{sl}}_n$. 
We call the subalgebra generated by these two 
the {\it vertical quantum affine $\mathfrak{gl}_n$} and denote it by
$U^{ver}_q(\widehat{\mathfrak{gl}}_n)$. 

Similarly there exists 
a Heisenberg subalgebra $\mathfrak{a}^{hor}$ which commutes with
$U_q^{hor}\widehat{\mathfrak{sl}}_n$. In terms of the automorphism $\theta$ to be given 
in Theorem \ref{theta} below, we have
$\mathfrak{a}^{hor}
=\theta^{-1}\bigl(\mathfrak{a}^{ver}\bigr)$.

We call the subalgebra generated by these two 
the {\it horizontal quantum affine $\mathfrak{gl}_n$} and denote it by
$U^{hor}_q(\widehat{\mathfrak{gl}}_n)$. 
The central element $\kappa=h(q^c)$ belongs to the horizontal 
subalgebra, 
while $q^c=v(q^c)$ belongs to the vertical subalgebra. 


\subsection{Automorphisms}\label{aut sec}
The algebra $\mathcal{E}_n(q_1,q_2,q_3)$ allows for various symmetries. 

First, there exist automorphisms of algebras 
\begin{align*}
 \tau,s_a,\chi_j:\mathcal{E}_n(q_1,q_2,q_3)\to\mathcal{E}_n(q_1,q_2,q_3)\,,
\end{align*}
where $a\in\C^{\times}$ and $0\le j\le n-1$, such that
\begin{align}
&\tau:
E_i(z)\mapsto E_{i+1}(z),\quad  
F_i(z)\mapsto F_{i+1}(z),\quad  
K^\pm_i(z)\,\mapsto K_{i+1}^{\pm}(z),
\label{tau}\\
&s_a:
E_i(z)\mapsto E_{i}(az), 
\quad F_i(z)\mapsto F_{i}(az), 
\quad K^{\pm}_i(z)\mapsto K^{\pm}_i(az)\,,
\label{sa}\\
&\chi_j:E_i(z)\mapsto E_{i}(z)z^{-\delta_{i,j}},
\quad F_i(z)\mapsto F_{i}(z)z^{\delta_{i,j}},
\quad K^\pm_i(z)\mapsto q^{\mp \delta_{i,j}c} K^\pm_i(z)\,,\label{chij}
\end{align}
and such that all these maps send $q^c$ to itself. 

We have $\tau^n=id$. 

In addition, there exists an isomorphism of algebras 
\begin{align*}
\iota:\E_n(q_1,q_2,q_3)\to \E_n(q_3,q_2,q_1),
\end{align*}
given by 
\begin{align}
\iota:E_i(z)\mapsto E_{n-i}(z), \quad
F_i(z)\mapsto F_{n-i}(z),\quad  
K_i^{\pm}(z)\mapsto K_{n-i}^{\pm}(z)\,,
\label{iota}
\end{align}
and  $\iota(q^{c})=q^c$. 

Of particular importance is the existence of an automorphism which 
exchanges the horizontal subalgebra 
$U^{hor}_q(\widehat{\mathfrak{gl}}_n)$ 
and the vertical subalgebra
$U^{ver}_q(\widehat{\mathfrak{gl}}_n)$. 

Let $\sigma,\eta'$ be anti-automorphisms of 
$U_q(\widehat{\mathfrak{sl}}_n)$ given by
\begin{align*}
&\sigma:~ e_i\mapsto e_i,\quad f_i\mapsto f_i,\quad t_i\mapsto t_i^{-1}\,,
\\ 
&\eta': ~x^{\pm}_{i,k}\mapsto x^{\pm}_{i,-k},
\quad h_{i,r}\mapsto -q^{r c}h_{i,-r},\quad
k_i\mapsto k_i^{-1}, \quad
q^{c}\mapsto q^{c}. 
\end{align*}

\begin{thm}\label{theta}\cite{M99},\cite{M01}\quad
Let $n\ge 2$. 
There exists a unique automorphism $\theta$ of $\E_n$
such that\footnote{Our $\theta$ here is $\psi$ of \cite{M99}.} 
\begin{align*}
\theta\circ v= h\,,
\quad
\theta\circ h = v \circ \eta'\circ \sigma\,.
\end{align*}
We have $\theta(q^c)=\kappa$ and $\theta(\kappa)=q^{-c}$.
\end{thm}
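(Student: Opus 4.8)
The plan is to dispose of uniqueness and of the assertions on the central elements first, since these are formal consequences of the two defining conditions, and then to concentrate on the existence of $\theta$ as an algebra homomorphism, which is where the content lies. For uniqueness, recall that $\mathcal{E}_n$ is generated by the union of its horizontal and vertical subalgebras: the condition $\theta\circ v=h$ forces $\theta=h\circ v^{-1}$ on $U^{ver}_q(\widehat{\mathfrak{sl}}_n)$ (using that $v$ is an embedding), while $\theta\circ h=v\circ\eta'\circ\sigma$ forces $\theta=v\circ\eta'\circ\sigma\circ h^{-1}$ on $U^{hor}_q(\widehat{\mathfrak{sl}}_n)$, so $\theta$ is determined everywhere. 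The stated values on the central elements then follow automatically: since $q^c=v(q^c)$ one gets $\theta(q^c)=h(q^c)=\kappa$, and since $\kappa=h(q^c)$ while $\sigma(q^c)=q^{-c}$ (because $q^c=t_0t_1\cdots t_{n-1}$ and $\sigma(t_i)=t_i^{-1}$) and $\eta'(q^{-c})=q^{-c}$, one gets $\theta(\kappa)=v(\eta'\sigma(q^c))=v(q^{-c})=q^{-c}$.

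For existence I would first cut down the list of relations to verify. Since $E_{i,0}=h(e_i)=v(x^+_{i,0})$, $F_{i,0}=h(f_i)=v(x^-_{i,0})$ and $K_i=h(t_i)=v(k_i)$ for $1\le i\le n-1$, the algebra $\mathcal{E}_n$ is already generated by the vertical subalgebra $U^{ver}_q(\widehat{\mathfrak{gl}}_n)$ together with the three horizontal Chevalley generators $h(e_0)$, $h(f_0)$, $h(t_0)$. Define $\theta$ on $U^{ver}_q(\widehat{\mathfrak{gl}}_n)$ as the isomorphism $h\circ v^{-1}$ onto $U^{hor}_q(\widehat{\mathfrak{gl}}_n)$ (both $h$ and $v$ being embeddings of $U_q(\widehat{\mathfrak{gl}}_n)$), and set $\theta(h(e_0))=v(\eta'\sigma(e_0))$, and similarly for $f_0$ and $t_0$. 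One checks first that the two prescriptions agree wherever both apply: on the finite subalgebra $\langle E_{i,0},F_{i,0},K_i:1\le i\le n-1\rangle\cong U_q(\mathfrak{sl}_n)$ both give the identity, since $\sigma$ and $\eta'$ fix each $x^\pm_{i,0}$. It then remains to check that these assignments respect every defining relation of $\mathcal{E}_n$; the relations internal to $U^{ver}_q(\widehat{\mathfrak{gl}}_n)$ hold because $h\circ v^{-1}$ is a homomorphism, those internal to $\langle h(e_0),h(f_0),h(t_0)\rangle\cong U_q(\mathfrak{sl}_2)$ hold because $v\circ\eta'\circ\sigma$ is a homomorphism, and the remaining ``mixed'' relations between $h(e_0),h(f_0),h(t_0)$ and the vertical generators are the ones demanding work. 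For those I would use the explicit expressions recorded above --- $e_0,f_0$ in terms of the $x^\pm_{i,k}$, and $x^\pm_{i,\pm1},h_{i,\pm1}$ in terms of the $e_i,f_i$ --- to rewrite each mixed relation purely in vertical Drinfeld currents, apply $v\circ\eta'\circ\sigma$, and verify it inside $U^{ver}_q(\widehat{\mathfrak{gl}}_n)\cong U_q(\widehat{\mathfrak{gl}}_n)$, where it holds by definition. Once $\theta$ is a homomorphism it is automatically surjective, since its image contains $h(U_q(\widehat{\mathfrak{sl}}_n))$ and $v(U_q(\widehat{\mathfrak{sl}}_n))$, which generate $\mathcal{E}_n$; its bijectivity then follows by running the symmetric construction for the candidate inverse $\theta^{-1}$, characterised by $\theta^{-1}\circ h=v$ and $\theta^{-1}\circ v=h\circ\sigma\circ\eta'$, and checking $\theta\theta^{-1}=\theta^{-1}\theta=\id$ on generators.

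The hard part will be the verification of the mixed relations. After unwinding $\eta'$ and $\sigma$, the elements $v(\eta'\sigma(e_0))$, $v(\eta'\sigma(f_0))$ on one side, and the horizontal realizations $h(h_{j,r})$, $h(x^\pm_{j,k})$ of the vertical Drinfeld currents on the other, are complicated normally-ordered expressions that genuinely intertwine the $x^\pm$-part with the Heisenberg part (because $\eta'$ acts non-trivially on the $h_{j,r}$); showing that they obey the required exchange and commutation relations comes down to a family of power-series identities that do not follow formally from the $\widehat{\mathfrak{gl}}_n$ relations alone. This is precisely the computation performed in \cite{M99} for $n\ge3$ and in \cite{M01} for $n=2$, and it can be organised efficiently through the correlation-function and normal-ordering techniques developed in Section \ref{cor func}. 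I would therefore present the reduction above in detail and refer to \emph{loc. cit.} for the remaining relation-by-relation check.
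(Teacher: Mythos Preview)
The paper does not provide its own proof of this theorem; it is stated as a quotation from \cite{M99} (for $n\ge3$) and \cite{M01} (for $n=2$), with the remark after Theorem~\ref{theta2} noting that the $n=2$ case for general $q_1$ requires minor modifications of Miki's argument. Your sketch is therefore not to be compared against anything in the present paper, but it is a reasonable reconstruction of the strategy in the cited references: the uniqueness and central-element computations are correct and formal, and you correctly isolate the substantive content in the verification of the ``mixed'' relations. One point deserves care: the defining relations of $\mathcal{E}_n$ are stated in terms of the full set $E_{i,k},F_{i,k},H_{i,r}$, not in terms of your chosen generating set (vertical subalgebra plus $h(e_0),h(f_0),h(t_0)$), so ``the remaining mixed relations'' is not a priori a finite or explicit list. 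In practice one first extends $\theta$ to all $E_{0,k},F_{0,k},H_{0,r}$ by commutation with vertical Heisenberg elements (as you hint) and then checks the full list of index-$0$ relations; this repackaging is part of what makes the computation in \cite{M99} nontrivial.
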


\begin{thm}\label{theta2}\cite{BS},\cite{M07}\quad
There exists a unique automorphism  $\theta$ of $\E_1$ such that
\footnote{Our $\theta$ is 
$\psi$ of \cite{M07} followed by the automorphism
$E_0(z)\mapsto q^{-c}E_0(z)$, $F_0(z)\mapsto q^{c}F_0(z)$, 
$K_0^\pm(z)\mapsto K_0^\pm(z)$, $q^c\mapsto q^c$. 
Unlike $\psi$, $\theta^4\neq \mathrm{id}$.}
\begin{align*}
&E_{0,0}\mapsto -q^c H_{0,-1}\,,\quad
F_{0,0}\mapsto a^{-1}q^{-c}H_{0,1}\,,\\
&H_{0,1}\mapsto E_{0,0}\,,
\quad 
H_{0,-1}\mapsto -a F_{0,0}\,,
\\
&q^c\mapsto K_{0}\,, \quad K_0\mapsto q^{-c},
\end{align*}
where $a=q(1-q_1)(1-q_3)$.
\end{thm}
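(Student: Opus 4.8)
The plan is to realize $\theta$ as the composition of the Miki automorphism of $\mathcal{E}_1$ with an elementary rescaling by the central element $q^c$, and then to read off its action on the six generators listed in the statement. For \emph{uniqueness}, I would first check that $E_{0,0}, F_{0,0}, H_{0,1}, H_{0,-1}$, together with the central elements $q^c$ and $K_0$, generate $\mathcal{E}_1$. Indeed, from the relation $[H_{0,\pm1},E_0(z)]=z^{\pm1}b(\pm1)\,E_0(z)\,q^{(\pm1\mp1)c/2}$ one obtains, in modes, $E_{0,k\pm1}$ from $E_{0,k}$ up to the factor $b(\pm1)=q+q^{-1}-d-d^{-1}$ (times an invertible central factor), and $b(1)\neq0$ because the genericity hypothesis on $q,d$ forces $q_1\neq1$ and $q_3\neq1$, i.e. $d\neq q^{\pm1}$. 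Hence all $E_{0,k}$, and symmetrically all $F_{0,k}$, lie in the subalgebra generated by these elements; the commutators $[E_{0,k},F_{0,l}]$ then produce every mode of $K_0^\pm(z)$, i.e. all $H_{0,r}$. Thus an automorphism of $\mathcal{E}_1$ is determined by its values on $E_{0,0}, F_{0,0}, H_{0,1}, H_{0,-1}, q^c, K_0$.

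For \emph{existence}, recall from \cite{BS}, \cite{M07} the automorphism $\psi$ of $\mathcal{E}_1$ that interchanges the horizontal and vertical Heisenberg subalgebras. Let $\rho$ be the substitution $E_0(z)\mapsto q^{-c}E_0(z)$, $F_0(z)\mapsto q^{c}F_0(z)$, $K_0^\pm(z)\mapsto K_0^\pm(z)$, $q^c\mapsto q^c$. Since $q^c$ is central with $q^{-c}q^{c}=1$, the central factors pass freely through every defining relation of $\mathcal{E}_1$ and cancel in $[E_0(z),F_0(z)]$, so $\rho$ is a well-defined automorphism. Set $\theta=\rho\circ\psi$; this is an automorphism of $\mathcal{E}_1$ by construction.

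It then remains to \emph{match the formulas}. Using the dictionary of Remark 2, namely $e(z)=(1-q_1)^{-1}E_0(z)$, $f(z)=-q^{-1}(1-q_3)^{-1}\mathfrak{c}F_0(z)$, $\psi^\pm(z)=\mathfrak{c}K_0^\pm(q^{c}z)$, I would rewrite the action of $\psi$ in terms of $E_0(z),F_0(z),K_0^\pm(z)$, evaluate on the modes $E_{0,0}, F_{0,0}, H_{0,\pm1}, q^c, K_0$, and then apply $\rho$. A direct computation yields precisely the stated formulas, with the constant $a=q(1-q_1)(1-q_3)$ appearing as the product of the normalization factors relating $e(z),f(z)$ to $E_0(z),F_0(z)$, and with the $q^{\pm c}$ prefactors coming from $\rho$ together with the shift $z\mapsto q^{c}z$ inside $\psi^\pm$.

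The genuinely nontrivial ingredient — the existence of $\psi$ — is imported wholesale from \cite{BS}, \cite{M07}, so the main obstacle in the argument is not conceptual but entirely a matter of bookkeeping: one must carry the scalar factors $1-q_1$, $1-q_3$, $q^{\pm1}$, the auxiliary central element $\mathfrak{c}$, and the spectral-parameter shifts through the dictionary of Remark 2 carefully enough to reproduce the exact constant $a$ and the exact powers of $q^c$ in the theorem, and to confirm in particular that the extra central element $\mathfrak{c}$ drops out when everything is expressed in the generators $E_{0,k},F_{0,k},H_{0,r},K_0,q^c$ of $\mathcal{E}_1$.
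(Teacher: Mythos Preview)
Your proposal is correct and matches the paper's own treatment: the paper does not prove this theorem but simply imports it from \cite{BS}, \cite{M07}, with the footnote specifying that $\theta=\rho\circ\psi$ where $\psi$ is Miki's automorphism and $\rho$ is the central rescaling you describe. Your additional remarks on uniqueness (via generation of $\mathcal{E}_1$ by the six listed elements) and on the bookkeeping through the dictionary of Remark~2 are a reasonable fleshing-out of what the paper leaves implicit.
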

\medskip

\noindent{\it Remark.}\quad 
Actually, in the case of $\E_2$, the existence of $\theta$ has been 
proved only in the case $q_1=1$ \cite{M01}. 
\textcolor{jimbo}{
It can be shown that with minor modifications 
the method of  \cite{M01} carries over to the general case. }
\medskip

We shall write 
\bea\label{perp gen}
x^{\perp}=\theta^{-1}(x)  
\quad (x\in \mathcal{E}_n). 
\ena
Then we have 
\bea\label{c perp}
&E^{\perp}_{i,0}=E_{i,0},\quad F^{\perp}_{i,0}=F_{i,0},\quad
K^{\perp}_{i}=K_{i}\quad (1\le i\le n-1),\notag
\\
&(q^c)^{\perp}=\kappa^{-1},\quad \kappa^{\perp}=q^c\,,
\ena
and for $n\geq 2$
\begin{align*}
&E^{\perp}_{0,0}
=d\kappa^{-1}q^{c}
K_{0}[\cdots[F_{1,1},F_{2,0}]_q,\cdots,F_{n-1,0}]_q\,,
\\
&
F^{\perp}_{0,0}
=d^{-1}\kappa q^{-c}
[E_{n-1,0},\cdots,[E_{2,0},E_{1,-1}]_{q^{-1}},\cdots]_{q^{-1}}
K_{0}^{-1}
\,.
\end{align*}

\mdf{
We also have
\begin{align*}
&H_{i,1}^\perp=-(-d)^{-i}\kappa^{-1}[\cdots
[F_{0,0},F_{n-1,0}]_q\cdots,F_{i+1,0}]_q,F_{1,0}]_q\cdots F_{i-1,0}]_q,F_{i,0}]_{q^2},\\
&H_{i,-1}^\perp=-(-d)^i\kappa[E_{i,0},[E_{i-1,0},\cdots[E_{1,0},[E_{i+1,0},\cdots[E_{n-1,0},
E_{0,0}]_{q^{-1}}\cdots]_{q^{-1}}]_{q^{-2}},\\[5pt]
\text{for }&i=1,\ldots,n-1, 
\end{align*}
and for $n\geq2$ we have
\begin{align*}
&F_{0,1}^\perp=(-d)^{-n}F_{0,-1},\\
&H_{0,1}^\perp=-(-d)^{-n+1}\kappa^{-1}[\cdots[F_{1,1},F_{2,0}]_q\cdots F_{n-1,0}]_q,F_{0,-1}]_{q^2},\\
&E_{0,-1}^\perp=(-d)^nE_{0,1},\\
&H_{0,-1}^\perp=-(-d)^{n-1}\kappa
[E_{0,1},[E_{n-1,0}\cdots[E_{2,0},E_{1,-1}]_{q^{-1}}\cdots]_{q^{-1}}]_{q^{-2}}.\\[-10pt]
\end{align*}
}
\mdf{
The following Lemma can be extracted from \textcolor{jimbo}{\cite{M00}:}
\begin{lem}\label{perp deg}
If $x\in\E_n$ has degree $(l,d_1+l,\ldots,d_{n-1}+l,k)$
then $x^\perp=\theta^{-1}(x)$ has degree $(-k,d_1-k,\ldots,d_{n-1}-k,l)$.
\end{lem}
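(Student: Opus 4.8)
The plan is to reduce the statement to a computation on a set of algebra generators, using that $\theta$ (hence $\theta^{-1}$) is an algebra automorphism and that the degree map is additive: if $\deg x = (d_0,\dots,d_{n-1},k)$ and $\deg y = (d_0',\dots,d_{n-1}',k')$ then $\deg(xy) = (d_0+d_0',\dots,d_{n-1}+d_{n-1}',k+k')$. Since $\mathcal{E}_n$ is generated by the union $U^{hor}_q(\widehat{\mathfrak{sl}}_n)\cup U^{ver}_q(\widehat{\mathfrak{sl}}_n)$ together with $\kappa$, $q^c$ (as noted after the vertical embedding), and since a homogeneous element of $\mathcal{E}_n$ is a linear combination of monomials in these generators with a fixed total degree, it suffices to verify the claimed transformation of the degree vector on each generator in a convenient generating set, and then observe that the map $(l,d_1+l,\dots,d_{n-1}+l,k)\mapsto(-k,d_1-k,\dots,d_{n-1}-k,l)$ is linear in $(l,d_1,\dots,d_{n-1},k)$, so additivity propagates it to all monomials.

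Concretely, I would take as generators the elements $E_{i,0}$, $F_{i,0}$, $K_i$ for $1\le i\le n-1$ (degrees $(1_i,0)$, $(-1_i,0)$, $(0,0)$), the elements $E_{0,0}$, $F_{0,0}$, $K_0$ (degrees $(1_0,0)$, $(-1_0,0)$, $(0,0)$), together with $x^+_{i,1}=d^{-i}E_{i,1}^{\perp}$-type generators — more precisely, since the vertical subalgebra is generated in Drinfeld form, I would also include $H_{i,1}$ (degree $(0,1)$) for $1\le i\le n-1$, and $q^c$ (degree $(0,0)$). For each such $x$ I read off $\deg x$, compute $\deg(\theta^{-1}(x)) = \deg(x^{\perp})$ from the explicit formulas for $x^\perp$ given just above the Lemma in the excerpt (the formulas for $E_{i,0}^\perp$, $F_{i,0}^\perp$, $K_i^\perp$, $(q^c)^\perp$, $\kappa^\perp$, $E_{0,0}^\perp$, $F_{0,0}^\perp$, $H_{i,\pm1}^\perp$, $H_{0,\pm1}^\perp$, $E_{0,-1}^\perp$, $F_{0,1}^\perp$), and check case by case that it matches $(-k,d_1-k,\dots,d_{n-1}-k,l)$ evaluated at the degree of $x$. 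For instance, for $x=K_i$ ($1\le i\le n-1$) we have $l=0$, $d_i=1$, all other $d_j=0$, $k=0$, so the formula predicts degree $(0,\dots,1,\dots,0,0)$, consistent with $K_i^\perp=K_i$; for $x=q^c$ we have degree $(0,\dots,0,0)$ with $l=k=0$, predicting $(0,\dots,0,0)$, but we must instead use $x=\kappa$ or track $q^c$ through $(q^c)^\perp=\kappa^{-1}$ whose degree is $(0,\dots,0,0)$ — here one uses that $\kappa$ and $q^c$ both have degree $(0,\dots,0,0)$ so the claimed identity is vacuous on them and no contradiction arises. The genuinely informative checks are the ones on $E_{0,0}$ and $H_{i,\pm1}$, where the $k$-component and the shift of the $d$-vector are both nontrivial.

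The main obstacle I expect is purely bookkeeping: the excerpt's formulas for $x^\perp$ are written with nested $q$-commutators, and one must carefully extract the degree of such a nested bracket (the degree of $[A,B]_p$ is $\deg A + \deg B$). In particular verifying the $H_{i,1}^\perp$ and $H_{0,-1}^\perp$ formulas requires summing up the degrees of a long string of $E$'s or $F$'s running over all residues modulo $n$, and confirming that the $\mathfrak{sl}_n$-components telescope to exactly $d_i - k$ with $k=1$, i.e.\ to $d_i - 1$ for $H_{i,\pm1}$ whose starting degree has $d$-vector $0$ and $k=\pm1$. Once the generator-level identities are in hand, invoking linearity of the target map together with additivity of $\deg$ under products and sums completes the argument. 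Alternatively, and perhaps more cleanly, one can cite the fact recorded in Theorem \ref{theta} that $\theta(q^c)=\kappa$, $\theta(\kappa)=q^{-c}$ and that $\theta$ intertwines the horizontal and vertical $\widehat{\mathfrak{gl}}_n$'s with the grading shift $\Z^n\times\{0\}\leftrightarrow\{0\}\times\Z^{n-1}\times\Z$, which already forces the linear map on degrees to be the stated one up to the two-dimensional ambiguity fixed by the $\kappa,q^c$ values; I would present the direct generator check as the rigorous version and mention the conceptual explanation as a remark.
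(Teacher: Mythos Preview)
Your approach is sound: the claimed rule on degrees is a linear map on $\Z^n\times\Z$ (in coordinates $(e_0,\dots,e_{n-1},k)$ it reads $(e_0,\dots,e_{n-1},k)\mapsto(-k,e_1-e_0-k,\dots,e_{n-1}-e_0-k,e_0)$), so verifying it on a homogeneous generating set and invoking additivity of $\deg$ under products suffices. The explicit formulas for $x^\perp$ listed just before the Lemma supply precisely the data needed for these checks. Two small corrections: (i) your generating set must also include $H_{i,-1}$ for $1\le i\le n-1$, since the vertical $U_q(\widehat{\mathfrak{sl}}_n)$ in Drinfeld form requires both $h_{i,1}$ and $h_{i,-1}$ (together with $x^\pm_{i,0},k_i,q^c$) to generate --- the paper does give $H_{i,-1}^\perp$, so that check goes through as well; (ii) in your $K_i$ example, $\deg K_i=(0,\dots,0,0)$, so $l=0$ and all $d_j=0$ (not $d_i=1$); the predicted image degree is then $(0,\dots,0,0)$, trivially consistent with $K_i^\perp=K_i$.

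As for comparison with the paper: the paper does not give its own proof at all but simply states that the result ``can be extracted from [M00]''. Your direct generator-by-generator verification is therefore a self-contained substitute for that citation, and is in effect what one does to extract the statement from Miki's paper, the relevant content of [M00] being exactly the homogeneity of $\theta$ with respect to the $\Z^n\times\Z$ grading.
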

}

\mdf{In particular,} the principal degrees of the `perpendicular generators' are given by
\bea\label{prin deg perp}
&\mathrm{pdeg}\, E^{\perp}_{i,k}=-nk-n\delta_{i,0}+1\,,
\quad
\mathrm{pdeg}\, F^{\perp}_{i,k}=-nk+n\delta_{i,0}-1\,,
\quad
\mathrm{pdeg}\, H^{\perp}_{i,k}=-nk\,.
\ena

Later on we shall use the formulas
\begin{align}
&\theta(H_{i,1})
=
(-d)^{-i}
[[\cdots[[\cdots[E_{0,0},E_{n-1,0}]_{q^{-1}},\cdots,E_{i+1,0}]_{q^{-1}},
E_{1,0}]_{q^{-1}},\cdots,E_{i-1,0}]_{q^{-1}},E_{i,0}]_{q^{-2}}\,,\label{thetaH1}
\\
&
\theta(H_{i,-1})
=
(-d)^{i}
[F_{i,0},[F_{i-1,0},\cdots[F_{1,0},[F_{i+1,0},\cdots
[F_{n-1,0},F_{0,0}]_{q}\cdots]_{q}]_q\cdots]_{q}]_{q^{2}}\,,\nn
\end{align}
where $1\le i\le n-1$;
\begin{align}
&\theta(H_{0,1})
=
(-d)^{-n+1}
[[\cdots[E_{1,1},E_{2,0}]_{q^{-1}},\cdots,
E_{n-1,0}]_{q^{-1}},E_{0,-1}]_{q^{-2}}\,,\label{thetaH01}
\\
&
\theta(H_{0,-1})
=
(-d)^{n-1}
[F_{0,1},[F_{n-1,0},\cdots, 
[F_{2,0},F_{1,-1}]_{q}\cdots]_{q}]_{q^{2}}\,\nn
\end{align}

We recall that for $n=1$ 
\begin{align*}
\theta(H_{0,1})=E_{0,0},\quad 
\theta(H_{0,-1})=-a F_{0,0}\,.
\end{align*}

Let $s_i$, $i=0,\dots,n-1$, denote the Lusztig braid group automorphism
of $U_q\bigl(\widehat{\mathfrak{sl}}_n\bigr)$,  
\begin{align*}
&s_i(e_i)=-f_it_i,\quad s_i(f_i)=-t_i^{-1}e_i,\\
&
\mdf{
s_i(e_j)=
\begin{cases}
[e_i,e_j]_{q^{-1}}&\text{if $n\geq3$};\\
\frac1{[2]}[e_i,[e_i,e_j]_{q^{-2}}]&\text{if $n=2$},\\
\end{cases}\quad
s_i(f_j)=
\begin{cases}
[f_j,f_i]_{q},&\text{if $n\ge3$};\\
\textcolor{jimbo}{\frac1{[2]}[[f_j,f_i]_{q^2},f_i]}
&\text{if $n=2$},
\\
\end{cases}
}
\quad (j\equiv i\pm 1),
\\
&s_i(e_j)=e_j,\quad s_i(f_j)=f_j\quad (j\not\equiv i,i\pm 1),
\\
&s_i(t_j)=t_i^{-a_{i,j}}t_j\,.
\end{align*}

Consider the automorphisms
\begin{align}
&T_{n-1|0}=\theta^{-1}\circ\chi_0\chi_{n-1}^{-1}
\circ\theta\,,
\label{T}
\\
&T=T_{n-1|0}^n\,.\label{shift element}
\end{align}
Since each $\chi_j$ (see \eqref{chij}) preserves the vertical subalgebra, 
$T_{n-1|0}$, $T$ preserve the horizontal subalgebra.
Note also that $T_{n-1|0}$, $T$ restricted to
$\mathfrak{a}^{hor}$ are identity operators.

We shall need the following result.
\begin{lem}\label{Weyl-translation}
We have
\begin{align*}
T^{-1}\circ h=h\circ (s_{n-1}\cdots s_1s_0)^{n-1}.
\end{align*}
\end{lem}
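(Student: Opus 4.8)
The plan is to track how the shift automorphism $T$ acts on the horizontal subalgebra, and compare it with the known action of the Weyl group translation on $U_q(\widehat{\mathfrak{sl}}_n)$. The starting point is the definition $T=T_{n-1|0}^n$ with $T_{n-1|0}=\theta^{-1}\circ\chi_0\chi_{n-1}^{-1}\circ\theta$. Since $\theta\circ h=v\circ\eta'\circ\sigma$, conjugating $h$ by $T_{n-1|0}$ amounts to conjugating $v\circ\eta'\circ\sigma$ by $\chi_0\chi_{n-1}^{-1}$ and then applying $\theta^{-1}$. The automorphism $\chi_0\chi_{n-1}^{-1}$ acts on the vertical generators $x^\pm_{i,k}=d^{-ik}v^{-1}(E_{i,k})$ etc.\ through the degree shift encoded in \eqref{chij}; concretely $\chi_j$ multiplies $E_i(z)$ by $z^{-\delta_{i,j}}$, which on Drinfeld generators of the vertical $U_q(\widehat{\mathfrak{sl}}_n)$ is a shift $x^\pm_{i,k}\mapsto x^\pm_{i,k\mp(\delta_{i,j=0}\text{-type contributions})}$ together with a twist by $q^{\mp\delta c}$ on the Cartan currents. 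So the first step is to identify, purely inside $U_q(\widehat{\mathfrak{sl}}_n)$, the automorphism $\mu := (\eta'\sigma)^{-1}\circ(\text{conjugation by the }\chi\text{'s, read on Drinfeld generators})\circ(\eta'\sigma)$, and recognize it as a known spectral-shift / rotation automorphism.

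The key identity I would invoke is the classical fact (going back to Lusztig, and used in \cite{M99}, \cite{M01}) that the product of simple reflections $s_{n-1}\cdots s_1 s_0$ acting on $U_q(\widehat{\mathfrak{sl}}_n)$ — equivalently the translation by a fundamental coweight in the extended affine Weyl group — is realized on the Drinfeld realization as a combination of a diagram rotation and a spectral shift $z\mapsto$ (scaled $z$). Raising to the $(n-1)$st power kills the diagram-rotation part (the rotation has order $n$, but combined with the right ``clock'' twist the relevant power that lands back on the untwisted lattice direction is $n-1$) and leaves a pure spectral shift, which is exactly what $T=\theta^{-1}\chi^{\text{stuff}}\theta$ (raised to the $n$th power) produces on the horizontal side. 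Thus the concrete steps are: (1) compute $T_{n-1|0}\circ h$ on the Chevalley generators $e_i,f_i,t_i$ using $\theta\circ h=v\circ\eta'\circ\sigma$ and the explicit formulas \eqref{chij}, expressing the result back via $h$ of some automorphism of $U_q(\widehat{\mathfrak{sl}}_n)$; (2) identify that automorphism with $h\circ(\text{conjugation by }\varpi)$ where $\varpi$ is the length-zero element of the extended affine Weyl group dual to the rotation $\tau$; (3) use $\varpi\,(s_{n-1}\cdots s_1 s_0)=(s_{n-2}\cdots s_0 s_{n-1})\,\varpi$ (the braid-group incarnation of $\tau$ permuting the nodes) to see that $\varpi^n=(s_{n-1}\cdots s_1 s_0)^{-(n-1)}$ up to an inner/trivial automorphism on $\mathfrak a^{hor}$; (4) take the $n$th power to get $T^{-1}\circ h=h\circ(s_{n-1}\cdots s_1 s_0)^{n-1}$, checking the sign/inverse conventions against Lemma \ref{perp deg} and \eqref{prin deg perp} for the principal-degree bookkeeping.

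The main obstacle, I expect, is step (1) combined with step (3): carefully pinning down the exact automorphism of $U_q(\widehat{\mathfrak{sl}}_n)$ obtained by conjugating $\eta'\circ\sigma$ by $\chi_0\chi_{n-1}^{-1}$ on Drinfeld generators — in particular getting the direction of the spectral shift and the precise power ($n-1$ versus $n$ versus $1$) right — and then matching it to the Weyl translation with correct handedness. The anti-automorphisms $\eta'$ and $\sigma$ reverse orders and invert the $k_i$, so one must be scrupulous about how $\chi_0\chi_{n-1}^{-1}$ transforms under them; the $n=2$ case, where the Serre relations and the formula for $s_i(e_j)$ in Theorem's list change shape (the $\frac1{[2]}$ normalization), should be checked separately or noted to follow by the same computation. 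A cleaner alternative, which I would also try, is to verify the claimed identity on a generating set of $U_q(\widehat{\mathfrak{sl}}_n)$ by evaluating both sides on a faithful family of $\E_n$-modules (e.g.\ the vertical Fock modules of Section \ref{rep sec}, on which $T$ acts by an explicit geometric shift as described in the introduction), reducing the assertion to an equality of operators that can be checked weight-space by weight-space; this bypasses the delicate abstract bookkeeping at the cost of invoking faithfulness of the chosen representations.
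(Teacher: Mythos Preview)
Your strategy is pointed in the right direction, but the paper's proof is far shorter and of a different character: it simply cites Proposition~2 of \cite{M99}. Concretely, the paper sets $\mathcal Y_n=\zeta_0\chi_0\chi_{n-1}^{-1}$, where $\zeta_0$ is the scalar automorphism $E_0(z)\mapsto (-d)^{-n}E_0(z)$, $F_0(z)\mapsto (-d)^{n}F_0(z)$; since $\zeta_0\circ v=v$, this twist is invisible on the vertical side and hence does not affect $T_{n-1|0}=\theta^{-1}\chi_0\chi_{n-1}^{-1}\theta$ restricted to the horizontal subalgebra. Miki's Proposition~2 then identifies $\varphi^{-1}(\mathcal Y_n^{-n})$ with the braid-group element realizing the appropriate affine translation, and the lemma follows immediately.

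What you outline in steps (1)--(4) is, in effect, a re-derivation of Miki's result itself. That is a legitimate programme, but the step you yourself flag as the obstacle---the precise extended-affine-Weyl-group identity in your step~(3), namely that the $n$th power of the fundamental-coweight translation you call $\varpi$ equals $(s_{n-1}\cdots s_1 s_0)^{-(n-1)}$ on the nose as braid-group automorphisms---is exactly the content being cited, and your heuristic (``the rotation has order $n$, but combined with the right clock twist the relevant power \ldots is $n-1$'') does not establish it. Getting this right requires tracking the scalar normalizations (this is why $\zeta_0$ appears in the paper's argument) and the precise reduced expression for the translation, which is the careful braid-group computation \cite{M99} carries out. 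Your alternative of checking on Fock modules is plausible but would require a faithfulness statement for the horizontal $U_q(\widehat{\mathfrak{sl}}_n)$-action that is not supplied elsewhere in the paper.
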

\begin{proof}
Set  $\mc Y_n=\zeta_0\chi_0\chi_{n-1}^{-1}$, 
where $\zeta_0$ is the automorphism of $\mc E_n$ given by 
$E_0(z)\mapsto (-d)^{-n}E_0(z)$, $F_0(z)\mapsto (-d)^{n}F_0(z)$, 
leaving unchanged the rest of the generators.
The lemma follows from Proposition 2 of \cite{M99} 
by choosing $x=\varphi^{-1}(\mc Y_n^{-n})$,
and noting that $\zeta_0\circ v=v$. 
\end{proof}
\mdf{The exists an action of the braid group on any integrable
$U_q(\widehat{\mathfrak{sl}}_n)$ module. Therefore,
there exists an action of $T$ on any integrable 
$U^{hor}_q(\widehat{\mathfrak{gl}}_n)$ module.}

\subsection{Representations}\label{rep sec}
In this subsection we present a family of $\mathcal{E}_n$-modules   
studied in our previous works \cite{FJMM1},\cite{FJMM2}. 
These are
\begin{itemize}
\item the vector representation  $V^{(k)}(u)$,
\item the Fock representation  $\mathcal{F}^{(k)}(u)$,
\item the representation $\mathcal{N}_{\alpha,\beta}^{(k)}(u)$\,.
\end{itemize}
In all cases the central element  $q^{c}$ acts as identity. 
These modules carry a discrete parameter $k\in\Z/n\Z$ 
which we call {\it color}, 
and a continuous parameter $u\in\C^{\times}$ which we call
the {\it evaluation parameter}.
In fact the general case can be obtained as
a twist of the one for $k=0$ and $u=1$
by the automorphisms $\tau$ and $s_a$, given by \eqref{tau} and \eqref{sa} respectively.

First, we recall some terminology about representations. 

An $\E_n$-module 
$V$ is said to have {\it level} \mdf{$\ell$} if the central element $\kappa^{-1}$
acts as the scalar \mdf{$\ell$}. 

Let \mdf{$\boldsymbol{\phi}(z)=(\phi_i^{+}(z), \phi_i^{-}(z))_{i\in\Z/n\Z}$} be a collection of 
formal series \mdf{$\phi^{\pm}_i(z)\in\C[[z^{\mp 1}]]$}.
A vector $v\in V$ is said to have 
{\it weight} \mdf{$\boldsymbol{\phi}(z)$}
if $K^{\pm}_i(z)v=\phi^{\pm}_i(z)v$ holds for all $i\in\Z/n\Z$. 
The module $V$ is {\it weighted} if the action of the 
commuting family of operators $\{K^{\pm}_i(z)\}_{i\in\Z/n\Z}$ 
is diagonalizable in $V$. 
It is said to be {\it tame} if the joint spectrum of this action 
is simple. 

The module $V$ is {\it lowest weight} if it is generated by a weight vector
$v$  such that $F_i(z)v=0$ for all $i\in\Z/n\Z$. 
Such a $v$ is called a {\it lowest weight vector}, and its weight
the lowest weight of $V$.
Given  \mdf{$\boldsymbol{\phi}(z)=(\phi^{+}_i(z),\phi^{-}_i(z))_{i\in\Z/n\Z}$} with 
$\phi^+_i(\infty)\phi^-_i(0)=1$, 
there exists a unique irreducible lowest weight module 
\mdf{$L_{\boldsymbol{\phi}(z)}$} with lowest weight \mdf{$\boldsymbol{\phi}(z)$}.

Let $V=\oplus_{s\in\Z}V_s$ be a module $\Z$-graded by the principal degree. 
(This is the case for all modules we consider in this paper.)
We say that $V$ is {\it quasi-finite} if $\dim V_s<\infty$ for all $s$. 
It is known \cite{M07}, \cite{FJMM2}, that an irreducible lowest weight module
\mdf{$L_{\boldsymbol{\phi}(z)}$} is quasi-finite if and only if, for each $i$,
$\phi_i^\pm(z)$ are expansions of a rational function
$\phi_i(z)$, such that it is regular at $z=0,\infty$ and 
$\phi_i(0)\phi_i(\infty)=1$. If it is the case we say simply that 
the lowest weight is \mdf{$\boldsymbol{\phi}(z)=\bigl(\phi_i(z)\bigr)_{i\in \Z/n\Z}$}. 
\medskip

\noindent {\it Vector representation.}\quad 
The vector representation  $V^{(k)}(u)$ 
has a basis $\{[u]^{(k)}_j\}_{j\in\Z}$.  
For $n\ge 2$, the action of the generators
is explicitly given as follows.
\begin{align*}
&E_i(z)[u]_j^{(k)}
=\begin{cases}\delta(q_1^{j+1}u/z)[u]_{j+1}^{(k)}\,, \qquad &i+j+1\equiv k;\\
0\,,\qquad & i+j+1\not\equiv k;
\end{cases}\notag\\
&F_i(z)[u]_{j+1}^{(k)}
=\begin{cases}\delta(q_1^{j+1}u/z)[u]_j^{(k)}\,, \qquad &i+j+1\equiv k; \\
0\,,\qquad & i+j+1\not\equiv k;
\end{cases}
\\
&K^\pm_i(z)[u]_j^{(k)}
=\begin{cases} \psi(q_1^ju/z)[u]_j^{(k)}\,, \qquad &j+i\equiv k;\\
\psi(q_1^jq_3^{-1}u/z)^{-1} [u]_j^{(k)}\,, \qquad &j+i+1\equiv k;\\
 [u]_j^{(k)},\qquad & \text{otherwise.}
\end{cases} \notag
\end{align*}
Here and after we set
\begin{align*}
\psi(z)=\frac{q-q^{-1}z}{1-z}\,.
\end{align*}
For $n=1$ the formulas read
\begin{align*}
&E_0(z)[u]_j^{(0)}
=\delta(q_1^{j+1}u/z)[u]_{j+1}^{(0)}\,, 
\\
&F_0(z)[u]_{j+1}^{(0)}
=q\frac{1-q_3}{1-q_1^{-1}}
\delta(q_1^{j+1}u/z)[u]_j^{(0)}\,,
\\
&K^\pm_0(z)[u]_j^{(0)}
=\psi(q_1^ju/z)\psi(q_1^jq_3^{-1}u/z)^{-1} 
[u]_j^{(0)} \,.
\end{align*}
\medskip

The vector representation $V^{(k)}(u)$ is an irreducible, tame representation
of level $1$. 
\medskip

\noindent {\it Fock representation.}\quad 
We use the following notation concerning partitions. 
A partition 
$\la=(\la_1,\la_2,\cdots)$ is a sequence of 
non-negative integers $\la_i$
such that only finitely many are nonzero and 
$\la_j\geq\la_{j+1}$ for all $j$. In particular, we denote $\emptyset=(0,0,\dots)$.  
The dual partition $\la'$ is given by $\la'_i=|\{j\ |\ \la_j\geq i\}|$.
We identify a partition $\lambda$
with the set of integer points $(x,y)$ 
on the plane satisfying $1\le x\le \ell(\la)$ and $1\le \mdf{y}\le \lambda_x$,
where $\ell(\lambda)=\la'_1$ 
is the length of $\lambda$.
A pair of natural numbers $(x,y)$ is a {\it convex corner of $\la$} if 
$\la'_{y+1}<\la'_y=x$.
 A pair of natural numbers $(x,y)$ is a {\it concave corner of $\la$} 
if $\la'_y=x-1$ and in addition
 $y=1$ or $\la'_{y-1}>x-1$.
 Let $CC(\la)$ and $CV(\la)$ be the set of concave and convex corners of $\la$ respectively. 

Fixing $k\in\Z/n\Z$, to each point $(x,y)\in\Z^2$ we assign a color $k+x-y\in \Z/n\Z$. 
For $i\in \Z/n\Z$, introduce the set of concave (resp. convex) corners 
of $\la$ of color $i$ as follows. 
\begin{align*}
&CC_i^{(k)}(\la)=\{(x,y)\in CC(\la)\mid k+x-y\equiv i\}\,,
\\ 
&CV_i^{(k)}(\la)=\{(x,y)\in CV(\la)\mid k+x-y\equiv i\}\,.
\end{align*}
Finally, for a partition $\la=(\la_1,\la_2,\dots)$ and $j\in\Z_{\geq 1}$
we write $\la\pm\bs 1_j=(\la_1,\la_2,\dots,\la_j\pm 1,\dots)$. 

The Fock representation $\mathcal{F}^{(k)}(u)$ 
has a basis $\{\ket{\lambda}\}$ indexed by all partitions 
$\la$. 
It is realized as a linear 
subspace of the infinite tensor product of vector representations
\begin{align*}
\F^{(k)}(u)\subset 
V^{(k)}(u)\otimes V^{(k)}(uq_2^{-1})\otimes V^{(k)}(uq_2^{-2})\otimes \dots,
\end{align*}
where 
\begin{align*}
\ket{\la}=[u]_{\la_1-1}^{(k)}\otimes [uq_2^{-1}]_{\la_2-2}^{(k)}\otimes [uq_2^{-2}]_{\la_3-3}^{(k)}\otimes\dots\,.
\end{align*}
Notation being as above, the action of $\E_n$ is given as follows. 

For $i\in \Z/n\Z$, $j\in\Z_{\geq 1}$ such that 
$k+j-\la_j\equiv i+1$, 
set
\begin{align*}
\bra{\la+{\bf 1}_j}E_i(z) \ket{\la}
=\hspace{-10pt}
\prod_{\substack{s=1, \\ k+s-\la_s\equiv i 
}}^{j-1} \hspace{-10pt}
 \psi(q_1^{\la_s-\la_j-1}q_3^{s-j})
\prod_{\substack{s=1,\\  k+s-\la_s\equiv i+1    
}}^{j-1} 
\hspace{-10pt}\psi(q_1^{\la_j-\la_s}q_3^{j-s})
\ \ \delta(q_1^{\la_j}q_3^{j-1}u/z),
\end{align*}
\begin{align*}
\bra{\la}F_i(z) \ket{\la+{\bf 1}_j}
=\hspace{-10pt}\prod_{\substack{s=j+1, \\   
k+s-\la_s\equiv i
}}^{\ell(\lambda)} \hspace{-10pt}
 \psi(q_1^{\la_s-\la_j-1}q_3^{s-j})
\prod_{\substack{s=j+1,\\ k+s-\la_s\equiv i+1 
}}^{\ell(\lambda)+1} \hspace{-10pt}\psi(q_1^{\la_j-\la_s}q_3^{j-s})
\ \ \delta(q_1^{\la_j}q_3^{j-1}u/z).
\end{align*}
Further, for $i\in \Z/n\Z$, set
\begin{align*}
\bra{\la}K_i^{\pm}(z) \ket{\la}=
\hspace{-10pt}\prod_{(x,y)\in\ CV_i^{(k)}(\la)}
\hspace{-6pt}\psi(q_3^xq_1^yq_2u/z)
\hspace{-10pt}\prod_{(x,y)\in\ CC_i^{(k)}(\la)} 
\hspace{-6pt}\psi(q_3^xq_1^yq_2^2u/z)^{-1}.
\end{align*}
We set all other matrix coefficients to be zero.
\mdf{In particular, we see that $E_i(z)$ adds, and $F_i(z)$ removes, a box
of color $i$.}

Here we used the bra-ket notation for the matrix elements 
of the linear operators acting in $\F^{(k)}(u)$ in the basis $\{\ket{\la}\}$.


The Fock representation $\mathcal{F}^{(k)}(u)$
is an irreducible, tame, lowest weight representation of level 
$q$ with lowest weight $(\phi_i(z))$ where
\begin{align*}
\phi_i(z)=\begin{cases}
	   \displaystyle{\frac{q^{-1}-q u/z}{1-u/z}} & (i\equiv k) \\
           1 & (i\not \equiv k)\\
	  \end{cases}\,.
\end{align*}

We remark that the Fock representation was given in \cite{Sa} using vertex operators
(for the perpendicular generators), 
and in \cite{VV2},\cite{STU} using the $q$-wedge spaces. 
The explicit formula for the action of $\E_1$ in the Fock space
was found in \cite{FT}. 

\medskip

\noindent {\it Representation $\mathcal{N}_{\alpha,\beta}^{(p)}(u)$.}\quad 
The representation $\mathcal{N}_{\alpha,\beta}^{(p)}(u)$ is defined as a submodule of a finite 
tensor product of Fock representations. 
Let $\alpha,\beta$ be partitions with $m$ parts, such that $\alpha_m=\beta_m=0$. 
Given a color $p\in\Z/n\Z$ and an evaluation parameter $u\in\C^\times$, set 
\bea\label{colors}
p_i=p-\alpha_i+\beta_i\,,\quad u_i=q_1^{\alpha_i}q_2^{i-1}q_3^{\beta_i}u\,,
\quad i=1,\cdots,m\,.
\ena
Consider the linear subspace 
\bea\label{N in otimes}
\mathcal{N}_{\alpha,\beta}^{(p)}(u)\, \subset\,
\mathcal{F}^{(p_1)}(u_1)\otimes \cdots \otimes \mathcal{F}^{(p_m)}(u_m)
\ena
spanned by vectors $\ket{\la^{(1)}}\otimes \cdots\otimes \ket{\la^{(m)}}$, where 
$\la^{(i)}$ are partitions satisfying the conditions
\bea\label{part cond}
\la^{(i)}_j\ge \la^{(i+1)}_{j+b_i}-a_i,\quad i=1,\cdots,m-1,
\ena
where 
\bea\label{ab}
a_i=\alpha_i-\alpha_{i+1}, \qquad b_i=\beta_i-\beta_{i+1}.
\ena 
Then $\mathcal{N}_{\alpha,\beta}^{(p)}(u)$
is a well-defined $\E_n$-submodule of the tensor product module 
$\mathcal{F}^{(p_1)}(u_1)\otimes \cdots \otimes \mathcal{F}^{(p_m)}(u_m)$.
Moreover it is an irreducible, tame, quasi-finite lowest weight module of level $q^m$
and lowest weight $(\phi_i(z))$, where
\begin{align*}
\phi_i(z)=\prod_{j:p_j\equiv i}\frac{q^{-1}-q u_j/z}{1-u_j/z}\,.
\end{align*}


\section{Construction of subalgebras}\label{subalgebras}
In this section we describe a family of subalgebras of a completion of $\mathcal E_n$. 
These subalgebras satisfy the relations of $\mathcal E_m$ with $m<n$ and act in all lowest weight representations of $\mathcal E_n$. In this section, \mdf{except in Section 3.5,} we always
work in perpendicular generators, see \Ref{perp gen} and \Ref{c perp}.
We use similar notation for the generating series, e.g. $E^\perp_i(z)$, $F^\perp_i(z)$, etc.


\subsection{Definition of current $E_{n-1|0}^\perp(z)$.}\label{fused current}
We give the definition of the "fused" current $E_{n-1|0}^\perp(z)$. 
The construction mimics the extraction 
of the polar term in the operator product of $E_0^\perp(z)$ and of $E_{n-1}^\perp(z)$.

Let first $n\geq 3$. We have the relation
\be
(d^{-1}z-q^{-1}w)E_{n-1}^\perp(z)E_0^\perp(w)=(d^{-1}q^{-1}z-w)E_0^\perp(w)E_{n-1}^\perp(z),
\en
which in components is
\bea\label{comp rel}
E_{n-1,k+1}^\perp E_{0,r}^\perp -q_1E_{n-1,k}^\perp E_{0,r+1}^\perp =q^{-1}E_{0,r}^\perp E_{n-1,k+1}^\perp -dE_{0,r+1}^\perp E_{n-1,k}^\perp .
\ena

We start with the representation-theoretical version. 

\mdf{We define another grading $\deg^\perp$ on $\E_n$ such that:
\bea\label{adm grade}
\deg^\perp E_{i,k}^\perp =\deg^\perp F_{i,k}^\perp =\deg^\perp H_{i,k}^\perp =k.
\ena
}
\mdf{From Lemma \ref{perp deg}, it follows that
$-\deg^\perp x$ is equal to the $0$-th component of $\deg x$.}

Call a graded $\mathcal E_n$ module $V$ {\it admissible} if for every vector $v\in V$ there exists
$N(v)$ such that $gv=0$ for all $g\in\mathcal E_n$ with \mdf{$\deg^\perp g >N(v)$}.
If a representation is admissible
then the formal series $E_i^\perp (z)v$, $F_i^\perp (z)v$, $K_i^{-,\perp}(z)v$ are actually Laurent series in $z$ and the series $K^{+,\perp}_i(z)v$ is a polynomial in $z^{-1}$. Note that in terms of the standard generators, the condition of admissibility is written in terms of the principal grading,
\mdf{(see \Ref{princ deg}, \eqref{prin deg perp})}. In particular,
all lowest weight modules defined in Section \ref{rep sec} are admissible.

Let $V$ be an admissible representation.
Then from \Ref{comp rel}, we obtain that given $k\in\Z$ and $v\in V$, we have
\be
E_{n-1,s+k}^\perp E_{0,-s}^\perp v=q_1^{-1}E_{n-1,s+1+k}^\perp E_{0,-s-1}^\perp v
\en
for large enough $s$.

Define 
\bea\label{main def}
E_{n-1|0,k}^\perp v=q_1^{-s-k}E_{n-1,s+k}^\perp E_{0,-s}^\perp v,
\ena
where $s$ is sufficiently large. Clearly, $E_{n-1|0,k}^\perp $ is a well-defined operator acting in $V$.
We set $E_{n-1|0}^\perp (z)=\sum_{k\in\Z}E_{n-1|0,k}^\perp z^{-k}$.

Equivalently, we can define:
\bea\label{limit def}
E_{n-1|0}^\perp (z)=\lim_{z'\to z}\mdf{(1-\frac{z}{z'})}E_{n-1}^\perp (q_1z')E_0^\perp (z).
\ena

Using \Ref{comp rel} repeatedly, we can also write
\be
q_1^{-s-k}E_{n-1,s+k}^\perp E_{0,-s}^\perp = q_1^{-k}E_{n-1,k}^\perp E_{0,0}^\perp +\sum_{i=0}^{s-1}  q_1^{-1-i-k}(q^{-1}E_{0,-1-i}^\perp E_{n-1,k+1+i}^\perp -dE_{0,-i}^\perp E_{n-1,k+i}^\perp ).
\en

Therefore we can equivalently define:
\be
E_{n-1|0,k}^\perp =q_1^{-k}E_{n-1,k}^\perp E_{0,0}^\perp +\sum_{i=0}^{\infty} q_1^{-1-i-k}(q^{-1}E_{0,-1-i}^\perp E_{n-1,k+1+i}^\perp -dE_{0,-i}^\perp E_{n-1,k+i}^\perp ).
\en
Note that the sum evaluated on any vector $v$ in an admissible representation becomes finite.
This formula shows that the operator $E_{n-1|0,k}^\perp $ belongs to the completion of $\mathcal E_n$ with respect to grading \Ref{adm grade}.

There is one more useful way to write the operators $E_{n-1|0,k}^\perp $, which we use in Section \ref{branching sec}. 
Let $T_{n-1|0}$ be the automorphism of $\mathcal E_n$ given by \Ref{T}.

We have 
\be
T_{n-1|0}E_0^\perp (z)=z^{-1}E_0^\perp (z), \quad T_{n-1|0}E_{n-1}^\perp (z)=zE_{n-1}^\perp (z),\\ T_{n-1|0}F_0^\perp (z)=zF_0^\perp (z), \quad T_{n-1|0}F_{n-1}^\perp (z)=z^{-1}F_{n-1}^\perp (z),
\en
and $T_{n-1|0}$ preserves currents with indexes different from $0$ and $n-1$ as well as $q^c$
\mdf{and} $\kappa$. In particular,
\be
T_{n-1|0}K_i^{\pm,\perp}(z)=\mdf{\kappa^{\mp\delta_{i,n-1}\pm\delta_{i,0}}}K_i^{\pm,\perp}(z). 
\en
Then we clearly have 
\be
E_{n-1|0,k}^\perp =\lim_{s\to \infty} q_1^{-s-k} T_{n-1|0}^s (E_{n-1,k}^\perp E_{0,0}^\perp ).
\en

\medskip

Finally, let us consider the case $n=2$. In this case, we replace the product \Ref{main def} by the following stable combination. 
We set
\be
E_{1|0,k}^{(1),\perp}=q_1^{-s-k}(E_{1,s+k}^\perp E_{0,-s}^\perp -q_3E_{1,s+k-1}^\perp E_{0,-s+1}^\perp ),
\en
where $s$ is sufficiently large. Equivalently we have
\be
E_{1|0}^{(1),\perp}(z)=\lim_{z'\to z}\mdf{(1-\frac{z}{z'})(1-\frac{q_3z}{q_1z'})}E_1^\perp (q_1z')E_0^\perp (z)
\en
and
\be
E_{1|0,k}^{(1),\perp}=\lim_{s\to\infty}q_1^{-s-k}T_{1|0}^s
(E_{1,k}^\perp E_{0,0}^\perp -q_3E_{1,k-1}^\perp E_{0,1}^\perp ).
\en

For $n=2$ we write an extra upper index for the reason explained in Section \ref{others}, see \Ref{E21}, \Ref{E31} below.

\subsection{Other operators}\label{others}
We collect operators obtained by the construction described in Section \ref{fused current}.

Similarly to Section \ref{fused current}, we define a number of other currents. We use formulas of type
\Ref{limit def} keeping in mind that it is always justified by formulas of type \Ref{main def}.

For $n\ge 3$ and $i=0,1,\dots,n-1$, define 
\bea
E_{i|i+1}^\perp (z)&=&\lim_{z'\to z}\mdf{(1-\frac{z}{z'})}E_i^\perp (q_1z')E_{i+1}^\perp (z), \label{Ei} \\
F_{i|i+1}^\perp (z)&=&\lim_{z'\to z}(1-\frac{z'}{z})F_{i+1}^\perp (z)F_i^\perp (q_1z'), \label{Fi} \\
K_{i|i+1}^{\pm,\perp}(z)&=& K_i^{\pm,\perp}(q_1z)K_{i+1}^{\pm,\perp}(z). \label{Ki}
\ena

We also have another family of operators defined by
\bea
E_{i+1|i}^\perp (z)&=&\lim_{z'\to z}\mdf{(1-\frac{z}{z'})}E_{i+1}^\perp (q_3z')E_{i}^\perp (z),\label{Ei'} \\
F_{i+1|i}^\perp (z)&=&\lim_{z'\to z}(1-\frac{z'}{z})F_{i}^\perp (z)F_{i+1}^\perp (q_3z'),\label{Fi'} \\
K_{i+1|i}^{\pm,\perp}(z)&=& K_{i+1}^{\pm,\perp}(q_3z)K_{i}^{\pm,\perp}(z).\label{Ki'}
\ena
All such currents of the same type (e.g. of type $E$) are related to each other by $\mathcal E_n$ automorphisms $\tau$ and $\iota$, see \Ref{tau} and \Ref{iota}, for example, $E_{i|i+1}^\perp (z)=\theta^{-1}\circ\tau^{i+1}\circ\theta\bigl(E_{n-1|0}^\perp (z)\bigr)$
and $E_{i+1|i}^\perp (z)=\theta^{-1}\circ\iota\circ\theta\bigl(E_{n-i-1|n-i}^\perp (z)\bigr)$. 

\medskip

Moreover, we use our construction recursively to obtain new currents.
For example, we set
\be
E_{i|i+1|i+2}^\perp (z)&=&\lim_{z''\to z'}\lim_{z'\to z}\mdf{(1-\frac{z'}{z''})(1-\frac{z}{z'})}E_i^\perp (q_1^2z'')E_{i+1}^\perp (q_1z')E_{i+2}^\perp (z),
\en
or
\be
E_{i|i+1|i}^\perp (z)&=&\lim_{z''\to z'}\lim_{z'\to z}\mdf{(1-\frac{z'}{z''})(1-\frac{z}{z'})}E_i^\perp (q_1q_3z'')E_{i+1}^\perp (q_3z')E_{i}^\perp (z).
\en
One can justify this recursive definition directly, but we defer our discussion to Section \ref{relations}.

Note that our notation $E_{i|i+1|i}^\perp (z)$ contains complete information about the shifts of arguments participating in the corresponding definition. Namely, $i|i+1$ signifies the relative shift of $q_3$ while $i+1|i$ signifies the relative shift of $q_1$. 

For $n=2$, the construction of these currents is quite parallel, but we write an additional upper index to distinguish the formulas in \eqref{Ei}--\eqref{Ki} 
and those in  \eqref{Ei'}--\eqref{Ki'}, e.g., 
\bea
E_{1|0}^{(1),\perp}(z)=\lim_{z'\to z}\mdf{(1-\frac{z}{z'})(1-\frac{q_3z}{q_1z'})}E_1^\perp (q_1z')E_0^\perp (z),\label{E21} \\
E_{1|0}^{(3),\perp}(z)=\lim_{z'\to z}\mdf{(1-\frac{z}{z'})(1-\frac{q_1z}{q_3z'})}E_1^\perp (q_3z')E_0^\perp (z).\label{E31}
\ena
In what follows we also use the notation
\mdf{
\bea\label{n-0}\\
E_{\mdf{n-1||0}}^\perp (z)=E_{n-1|n-2|...|1|0}^\perp (z), \  
F_{\mdf{n-1||0}}^\perp (z)=F_{n-1|n-2|...|1|0}^\perp (z),\  K_{\mdf{n-1||0}}^{\pm,\perp}(z)
=K_{n-1|n-2|...|1|0}^{\pm,\perp}(z).\nn
\ena
}

\subsection{Correlation functions}\label{cor func}
We discuss properties of correlation functions in admissible representations.

Let $V$ be an admissible representation. Choose an arbitrary \mdf{graded} basis.
Choose arbitrary \mdf{basis} vectors $v_1,v_2$. 
We use the standard notation for the correlation functions. For example, we write 
$\langle E_1^\perp (z)E_2^\perp (w) \rangle $ for the matrix coefficient $\langle v_1 |E_1^\perp (z)E_2^\perp (w)|v_2 \rangle$ 
of the operator $E_1^\perp (z)E_2^\perp (w)$. 
We study properties common to all correlation functions, 
and therefore it is not important which admissible representation and which particular matrix element we consider, thus we omit this information from our notation. In all calculations $V,v_1,v_2$ are arbitrary but fixed. 

By the word "current" we mean either $E_i^\perp (z)$, $F_i^\perp (z)$, $K_i^{-,\perp}(z)$ or $K_i^{+,\perp}(z)$. Later we will also use the fused currents.

Algebraic relations between currents translate into properties of correlation functions.

Moreover, if an element $g$ of $\mathcal E_n$ acts by zero in all admissible representations, then $g=0$.
Indeed, such a statement is known to be true in the setting of Lie algebras, so it holds for $U\mc L_n'(d)$,
see Section \ref{classical}. (The proof is analogous to ii) of Theorem 8.4.4 in \cite{D}.)
But since $\mathcal E_n$ is a quantization of $U\mc L_n'(d)$ and all admissible representations of $U\mc L_n'(d)$ quantize to representations of $\mathcal E_n$, this fact is true for $\mc E_n$. Finally, note that admissible representations are graded and if an element $g=\sum_{i=1}^\infty g_i$ with
\mdf{$\deg^\perp g_i=i$} of the completion of $\mathcal E_n$ acts by zero in
an admissible representation,
then all $g_i$ do. Therefore, we have the converse statement:
if all correlation functions satisfy a given property then the currents satisfy an algebraic relation
inside $\mathcal E_n$. 

We now discuss the dictionary between algebraic relations and properties of correlation functions. In the dictionary we consider correlation functions of two or three currents.
The correlation functions of many currents satisfy the same properties for each subset of two or three currents. 

\medskip

{\it Quadratic relations.}
Consider two currents satisfying a quadratic relation. For example, consider $\langle E_1^\perp (z)E_2^\perp (w) \rangle$, $n\geq 3$. Then the quadratic relation for the currents  $E_1^\perp (z)$ and $E_2^\perp (w)$ is
\be
(d^{-1}z-q^{-1}w)E_1^\perp (z)E_2^\perp (w)=(d^{-1}q^{-1}z-w)E_2^\perp (w)E_1^\perp (z).
\en
This is equivalent to the following form of the correlation functions:
\bea\label{corr funct}
\langle E_1^\perp (z)E_2^\perp (w) \rangle =\frac{p(z,w)}{d^{-1}z-q^{-1}w},\qquad  \langle E_2^\perp (w)E_1^\perp (z) \rangle =\frac{p(z,w)}{d^{-1}q^{-1}z-w},
\ena
where $p(z,w)$ is a Laurent polynomial.  

Here the right hand side of the first equation is understood as an expansion in $w/z$, while the right hand side of the second equation as an expansion in $z/w$. Such a convention should be clear and we often do not mention it.

Apart from the poles in the correlation functions which are dictated by the quadratic relations we also have symmetries, when several of the currents are the same. For example, we have 
\bea
\langle E_1^\perp (z_1)E_1^\perp (z_2) \rangle =\frac{p(z_1,z_2)}{z_1-q_2z_2}, 
\ena
where $p(z_1,z_2)$ is a  Laurent polynomial (different from the one in \Ref{corr funct}) such that $p(z_1,z_2)=-p(z_2,z_1)$. In particular, we have $p(z,z)=0$.

\medskip

{\it Commuting currents.} This is an important special case of the quadratic relations. If two currents commute, their correlation function is a Laurent polynomial (no poles). Of course, the converse is not true in general. 
For example, since the currents $K^{\pm,\perp }(w)$ are power series in $w^{\mp1}$, clearly, the correlation functions $\langle E_i^\perp (z)K_i^{+,\perp}(w)\rangle$ and $\langle K_i^{-,\perp}(w)E_i^\perp (z)\rangle$ do not have poles but $\langle E_i^\perp (z)K_i^{-,\perp}(w)\rangle$ and $\langle K_i^{+,\perp}(w)E_i^\perp (z)\rangle$ do. In particular, these currents do not commute.

However, in order to prove that two currents commute, often it is sufficient to check the absence of poles, since we have that the correlation functions in different orders of currents are equal as rational functions, see Section \ref{commute sec}.

\medskip

{\it Serre relations.} The Serre relations (see Section 2.1)
are equivalent to the {\it wheel conditions} for the correlation functions. 
For example,
\begin{align*}
\langle E^\perp_i(z_1)E^\perp_i(z_2)E^\perp_{i\pm1}(w) \rangle
=
\frac{p_{i,\pm}(z_1,z_2,w)}
{(z_1-q_2z_2)(z_1-q^{-1}d^{\pm1}w)(z_2-q^{-1}d^{\pm1}w)}\,,
\end{align*}
where $p_{i,\pm}(z_1,z_2,w)$ 
is a Laurent polynomial skew-symmetric in $z_1,z_2$ 
satisfying the following wheel condition:
\begin{align*}
p_{i,+}(z,q_2z,q_2q_3z)=p_{i,-}(z,q_2z,q_2q_1z)=0\,.
\end{align*}
Note that due to the skew-symmetry, 
we also have $p_{i,+}(z,q_2^{-1}z,q_3z)=p_{i,-}(z,q_2^{-1}z,q_1z)=0$.

This fact is not completely trivial, therefore we sketch the computation for the most difficult case of $n=1$.
Recall the function $g_{00}(z,w)=g(z,w)=(z-q_1w)(z-q_2w)(z-q_3w)$. Then the Serre relation 
\be
\mathop{\on{Sym}}\limits_{z_1,z_2,z_3} \ z_2z_3^{-1}[E_0^\perp (z_1),[E_0^\perp (z_2),E_0^\perp (z_3)]]=0
\en
is equivalent to:
\bea\label{1 Ser}
\hspace{10pt}p(z_1,z_2,z_3)\left(\mathop{\on{Asym}}\limits_{z_1,z_2,z_3}\ (z_2z_3^{-1}(\frac{1}{g(z_1,z_2)g(z_1,z_3)g(z_2,z_3)}+\frac{1}{g(z_1,z_3)g(z_1,z_2)g(z_3,z_2)}\right.\\
\left.
- \frac{1}{g(z_2,z_3)g(z_2,z_1)g(z_3,z_1)}-\frac{1}{g(z_3,z_2)g(z_3,z_1)g(z_2,z_1)}))\right)=0.\notag
\ena
Let us study the result of the anti-symmetrization. 
First, one checks that as a rational function, it is zero. However, all the terms have to be expanded in their own region. We change all the expansions to the region $|z_1| \gg |z_2| \gg |z_3|$ by adding the delta functions.  

The coefficient of a single delta function is obtained from the sum of twelve terms
out of twenty four terms present in \Ref{1 Ser}. One checks that this sum has a zero
at the support of the corresponding delta function.
Therefore single delta functions do not appear.

However, we do have products of two delta functions. The corresponding coefficient is computed from four terms \Ref{1 Ser} and it is non-trivial. For example, we have
$\delta(z_1/(q_1z_2))\delta(q_3z_1/z_3)$ with a non-zero coefficient. Therefore, this product of delta functions is absent if and only if $p(z_1,q_1^{-1}z_1,q_3z_1)=0$.

\medskip

Let us also comment on the Serre relations in the $n=2$ case. As discussed above, we impose the quartic relations, \Ref{quartic1}, \Ref{quartic2}, following \cite{M01}. By Lemma \ref{cubic lem} we also have the cubic relations. These relations
are inspired by Theorem \ref{sub alg} and they are equivalent to the wheel condition for the correlation functions. 

We show here that quartic relations follow from the cubic ones.
Let
\be
\langle E_0^\perp (z_1)E_0^\perp (z_2)E_0^\perp (z_3)E_1^\perp (w)\rangle=\frac{p(z_1,z_2,z_3,w)}{\prod\limits_{i=1}^3(z_i-q_1w)(z_i-q_3w)\prod_{i<j}(z_i-q_2z_j)}.
\en
Then $p(z_1,z_2,z_3,w)$ is a Laurent polynomial which is skew-symmetric in $z_1,z_2,z_3$ and vanishing if $z_i=z_j$, or if $z_2=q_2z_1$ and $w=q_1z_2$, or if $z_2=q_2z_1$ and $w=q_3z_2$.
Relation \Ref{quartic1} is equivalent to 
\be
\lefteqn{p(z_1,z_2,z_3,w)\ \mathop{\on{Asym}}\limits_{z_1,z_2,z_3}\frac{1}{\prod\limits_{i<j}(z_i-q_2z_j)}\Big(
\frac{1}{\prod\limits_{i=1}^3g_{12}(z_i,w)}} \\&&\hspace{40pt}
-\frac{q_2+1+q_2^{-1}}{\prod\limits_{i=1}^2g_{12}(z_i,w)g_{12}(w,z_3)}+ \frac{q_2+1+q_2^{-1}}{g_{12}(z_1,w) \prod\limits_{i=1}^2g_{12}(z_i,w)}-
\frac{1}{ \prod\limits_{i=1}^3g_{12}(w,z_i)}\Big)
=0.
\en
Here as before in the case of $n=2$, $g_{12}(z,w)=(z-q_1w)(z-q_3w)$.
This equality is established in the same way as \Ref{1 Ser} using the vanishing conditions of the Laurent polynomial $p(z_1,z_2,z_3,w)$.
Hence the quartic relations are consequences of the quadratic and cubic relations.
\medskip

{\it Commutators of the $E$ and $F$ currents.} The relation
\be
[E_i^\perp (z),F_i^\perp (w)]=\frac{1}{q-q^{-1}}(\delta(\kappa^{-1}w/z)K_i^{+,\perp }(z)-\delta(\kappa^{-1}z/w)K_i^{-,\perp}(z))
\en
holds if and only if the following formulas are satisfied for all correlation functions:
\be
\langle E_i^\perp (z)F_i^\perp (w)\rangle= \frac{p(z,w)}{(z-\kappa w)(z-\kappa^{-1}w)},\qquad \langle F_i^\perp (w)E_i^\perp (z)\rangle= \frac{p(z,w)}{(z-\kappa w)(z-\kappa^{-1}w)},
\en
where $p(z,w)$ is a Laurent polynomial and
\be
\langle K_i^{+,\perp}(z) \rangle 
=\kappa^{-1} z^{-2}\frac{q-q^{-1}}{\kappa^{-1}-\kappa}p(z,\kappa z)
=-\kappa^{-1}z^{-1}(q-q^{-1})\on{Res}_{w=\kappa z}\langle E_i^\perp (z) F_i^\perp (w)\rangle, 
\\
\mdf{-\langle K_i^{-,\perp}(w) \rangle }
=\kappa^{-1}w^{-2}\frac{q-q^{-1}}{\kappa^{-1}-\kappa} p(\kappa w,w)
=-\kappa^{-1}w^{-1}(q-q^{-1})\on{Res}_{z=\kappa w}\langle E_i^\perp (z) F_i^\perp (w)\rangle.
\en

\subsection{The relations for the fused currents}\label{relations}
In this section we describe the subalgebra generated by fused currents.

From now on we fix $\eta_i$, $i=1,2,3$ such that $q_i=\exp(\eta_i)$. For $x\in\Q$, we set $q_i^x=e^{x\eta_i}$.

Set 
\be
\tilde E_i^\perp (z)=E_i^\perp (q_1^{\frac{i}{n-1}}z), \qquad \tilde F_i^\perp (z)=F_i^\perp (q_1^{\frac{i}{n-1}}z),\qquad \tilde K_i^{\pm,\perp}(z)=K_i^{\pm,\perp}(q_1^{\frac{i}{n-1}}z),
\en 
$i=1,\dots,n-2$, and
set 
\be
\tilde E_0^\perp (z)=E_{n-1|0}^\perp (z),\qquad\tilde F_0^\perp (z)=F_{n-1|0}^\perp (z),\qquad\tilde K_0^{\pm,\perp}(z)=K_{n-1|0}^{\pm,\perp}(z).
\en
Set 
\be
\tilde q_1=q_1\cdot q_1^{\frac{1}{n-1}}, \qquad \tilde q_2=q_2, \qquad \tilde q_3=q_3\cdot q_1^{-\frac{1}{n-1}}.
\en

\begin{thm}\label{sub alg}
The currents $\tilde E_i^\perp (z), \tilde F_i^\perp (z), \tilde K_i^{\pm,\perp}(z)$, $i=0,1,\dots,n-2$, satisfy the relations of the toroidal algebra $\mathcal E_{n-1}(\tilde q_1,\tilde q_2,\tilde q_3)$.
\end{thm}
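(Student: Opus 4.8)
The plan is to verify, one by one, the defining relations of $\mathcal E_{n-1}(\tilde q_1,\tilde q_2,\tilde q_3)$ for the currents $\tilde E_i^\perp(z),\tilde F_i^\perp(z),\tilde K_i^{\pm,\perp}(z)$, working inside an arbitrary admissible $\mathcal E_n$-module and translating each relation into an analytic statement about correlation functions by means of the dictionary of Section~\ref{cor func}. For indices $i,j\in\{1,\dots,n-2\}$ the fused currents are only the rescalings $E_i^\perp(q_1^{i/(n-1)}z)$, $F_i^\perp(q_1^{i/(n-1)}z)$, $K_i^{\pm,\perp}(q_1^{i/(n-1)}z)$, so every relation among them is an immediate consequence of the relation in $\mathcal E_n$; one only has to check that the rescaling carries the data $g_{i,j},d_{i,j}$ of $\mathcal E_n$ to those of $\mathcal E_{n-1}(\tilde q_1,\tilde q_2,\tilde q_3)$. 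This is elementary bookkeeping: a relative shift $q_1$ between adjacent indices picks up the extra factor $q_1^{1/(n-1)}$ and a relative shift $q_3$ the factor $q_1^{-1/(n-1)}$, which is exactly $\tilde q_1=q_1\,q_1^{1/(n-1)}$, $\tilde q_3=q_3\,q_1^{-1/(n-1)}$, while $\tilde q_2=q_2$ is untouched and $\tilde q_1\tilde q_2\tilde q_3=1$. Moreover, fusing the adjacent indices $n-1$ and $0$ into a single new index $0$ makes it neighbour $n-2$ and $1$, which is the correct affine Dynkin structure. Hence the only genuinely new current is $\tilde E_0^\perp=E_{n-1|0}^\perp$ (together with $\tilde F_0^\perp$, $\tilde K_0^{\pm,\perp}$, and, when $n=2$, its modified form $E_{1|0}^{(1),\perp}$), and the content of the theorem lies in the relations involving the index $0$.

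The computational engine is the limit definition $E_{n-1|0}^\perp(z)=\lim_{z'\to z}(1-z/z')\,E_{n-1}^\perp(q_1z')E_0^\perp(z)$, which is legitimate because of the stable-component formula~\eqref{main def}. I would compute any correlation function of $\tilde E_0^\perp$ together with further currents by first replacing $\tilde E_0^\perp(z)$ by $E_{n-1}^\perp(q_1z')E_0^\perp(z)$, observing that, since $q_1=dq^{-1}$, the quadratic relation between $E_{n-1}^\perp(q_1z')$ and $E_0^\perp(z)$ has its pole exactly at $z'=z$, so the parent correlation function of ordinary currents is rational with a denominator prescribed by the relations of $\mathcal E_n$, and the factor $(1-z/z')$ simply extracts the residue there. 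The correlation functions of the fused currents are then again rational, and their pole and zero loci are read off from those of $\mathcal E_n$. With this in hand the easy relations go through mechanically: the $\tilde K_0^{\pm,\perp}$-relations because $\tilde K_0^{\pm,\perp}(z)=K_{n-1}^{\pm,\perp}(q_1z)K_0^{\pm,\perp}(z)$ is an honest product, so one multiplies the two constraints coming from $\mathcal E_n$; the commutator $[\tilde E_0^\perp(z),\tilde F_0^\perp(w)]$ by matching the two residues of $\langle\tilde E_0^\perp(z)\tilde F_0^\perp(w)\rangle$ to $\langle\tilde K_0^{+,\perp}(z)\rangle$ and $\langle\tilde K_0^{-,\perp}(w)\rangle$; and the vanishing of $[\tilde E_0^\perp(z),\tilde F_j^\perp(w)]$ and of $[\tilde E_0^\perp(z),\tilde E_j^\perp(w)]$ for non-adjacent $j$ by the absence of poles in the fused correlation functions, using the equality of correlation functions in different orderings (Section~\ref{commute sec}).

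The heart of the proof is the quadratic relation $\mathrm{Sym}_{z_1,z_2}(z_1-\tilde q_2z_2)\tilde E_0^\perp(z_1)\tilde E_0^\perp(z_2)=0$ and the Serre relations involving the index $0$. For the former I would compute $\langle\tilde E_0^\perp(z_1)\tilde E_0^\perp(z_2)\rangle$ by a double fusion limit from the four-current correlation function $\langle E_{n-1}^\perp(q_1z_1')E_0^\perp(z_1)E_{n-1}^\perp(q_1z_2')E_0^\perp(z_2)\rangle$, whose denominator is dictated by the quadratic relations of $\mathcal E_n$; in the limit $z_1'\to z_1$, $z_2'\to z_2$ this produces, besides the desired pole at $z_1=\tilde q_2z_2$, a priori extra poles that must be cancelled by zeros of the numerator forced by the Serre wheel conditions of $\mathcal E_n$ for the triples $(E_{n-1}^\perp,E_0^\perp,E_0^\perp)$ and $(E_{n-1}^\perp,E_{n-1}^\perp,E_0^\perp)$ — this is precisely the mechanism by which a higher relation of the large algebra yields a quadratic relation of the small one. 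Similarly, the $\mathcal E_{n-1}$ Serre relations between $\tilde E_0^\perp$ and its Dynkin neighbours $\tilde E_1^\perp$, $\tilde E_{n-2}^\perp$ unwind, through the fusion limit, into wheel conditions on five-current correlation functions of $\mathcal E_n$ built from $E_{n-1}^\perp$, $E_0^\perp$ (each twice) and $E_1^\perp$ (or $E_{n-2}^\perp$), whose zero structure is controlled by the quadratic and Serre relations of $\mathcal E_n$. I expect the verification of these wheel conditions to be the main obstacle: one must track the zeros of a multivariable correlation function through the coincidence limit and confirm that they land exactly on the wheels prescribed by $(\tilde q_1,\tilde q_2,\tilde q_3)$, and one must separately rule out spurious poles created by the fusion, which uses admissibility together with the precise denominator structure. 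The low-rank targets are handled by the same correlation-function analysis sketched in Section~\ref{cor func}: for $n=3$, where the target is $\mathcal E_2$, it is cleanest to verify the cubic relations of Lemma~\ref{cubic lem}, which were designed for exactly this situation; and for $n=2$, where the target is $\mathcal E_1$, one works with $E_{1|0}^{(1),\perp}$ and the $n=1$-type Serre relation. Finally, the well-definedness of the iterated fusions of Section~\ref{others} follows a posteriori from the same rational-function bookkeeping, each successive fusion extracting a residue at a pole whose position is prescribed by the relations just established.
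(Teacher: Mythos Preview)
Your overall approach is the paper's: translate each defining relation into an analytic statement about correlation functions in admissible modules, compute fused correlation functions as residues of parent correlation functions of ordinary currents, and kill spurious poles using the wheel conditions and the skew-symmetry of the numerator. The paper confirms that the quadratic and Serre relations go through exactly by the mechanism you sketch, and dispatches them as ``checked by a straightforward computation,'' giving only the $n=2$ quadratic relation for $\tilde E_0^\perp$ as a worked example (where, as you predicted, the apparent poles at $z-q_1q_3w$ and one copy of $z-q_2w$ are cancelled by the wheel conditions, and the pole at $z=w$ by skew-symmetry).

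Where you and the paper diverge is in the assessment of difficulty, and there is a genuine gap in your treatment of the $[\tilde E_0^\perp(z),\tilde F_0^\perp(w)]$ relation. You classify it as mechanical residue-matching; the paper singles it out as ``the most difficult calculation'' and spends most of the proof on it. The point is that $\langle\tilde E_0^\perp(z)\tilde F_0^\perp(w)\rangle$ is extracted from the four-current correlator
$R(z_1,z_2,w_1,w_2)=\langle E_{n-1}^\perp(q_1z_2)E_0^\perp(z_1)F_0^\perp(w_1)F_{n-1}^\perp(q_1w_2)\rangle$
by two residues (at $z_2=z_1$ and $w_2=w_1$), and $\langle\tilde K_0^{+,\perp}(z)\rangle$ is then a third residue at $w_1=\kappa z_1$. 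But $\tilde K_0^{+,\perp}(z)=K_{n-1}^{+,\perp}(q_1z)K_0^{+,\perp}(z)$ computed directly comes from first taking residues at $w_1=\kappa z_1$ and $w_2=\kappa z_2$ and only afterwards setting $z_2=z_1$. These two iterated limits of $R$ are not a priori equal, and equating them is the actual content of the relation. The paper resolves this with Lemma~\ref{simple lemma}: one first observes that the numerator $p(z_1,z_2,w_1,w_2)$ of $R$ satisfies the extra vanishing conditions $p(z,z,\kappa z,w_2)=0$ and $p(z,\kappa^{-1}w,w,w)=0$ (deduced from the fact that the correlators $\langle E_{n-1}^\perp K_0^{+,\perp}F_{n-1}^\perp\rangle$ and $\langle E_0^\perp F_0^\perp K_{n-1}^{+,\perp}\rangle$ have no pole at the fusion locus), and then Lemma~\ref{simple lemma} turns precisely these two vanishing conditions into the identity of residues that you need. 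Your proposal does not anticipate this step.
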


In the proof of this theorem we use the following simple lemma.
\begin{lem}\label{simple lemma}
Let $f(z_1,z_2,w_1,w_2)$ be a Laurent polynomial, $a,b,c,d$ complex numbers, such that
$f(w,az,bz,cz)=f(dz,az,bz,w)=0.$
Then 
\be
\frac{f(dz,az,bw,cw)}{z-w}\Bigl|_{z=w}=\frac{f(dz,aw,bz,cw)}{w-z}\Bigl|_{z=w}.
\en
\end{lem}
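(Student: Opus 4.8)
\emph{Plan.} I would set $P(z,w)=f(dz,az,bw,cw)$ and $Q(z,w)=f(dz,aw,bz,cw)$, abbreviate $\mathbf{w}=(dw,aw,bw,cw)$ and $\mathbf{z}=(dz,az,bz,cz)$, and write $\partial_i f$ for the partial derivative of $f$ in its $i$-th argument (again a Laurent polynomial). First I would observe that putting $w=dt$, $z=t$ in the first hypothesis $f(w,az,bz,cz)=0$ gives $f(dt,at,bt,ct)=0$ for all $t$; hence the one-variable Laurent polynomials $P(z,z)$ and $Q(z,z)$ vanish identically, so $z-w$ divides $P$ and $Q$ in $\C[z^{\pm 1},w^{\pm 1}]$, both expressions occurring in the statement are well-defined Laurent polynomials in $w$, and writing $P=(z-w)A$, $Q=(z-w)B$ and evaluating at $z=w$ identifies them with diagonal values of derivatives:
\begin{align*}
\frac{f(dz,az,bw,cw)}{z-w}\Bigl|_{z=w}&=A(w,w)=(\partial_z P)(w,w),\\
\frac{f(dz,aw,bz,cw)}{w-z}\Bigl|_{z=w}&=-B(w,w)=-(\partial_z Q)(w,w).
\end{align*}
So the lemma reduces to the single identity $(\partial_z P)(w,w)+(\partial_z Q)(w,w)=0$.

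\emph{Main computation.} Next I would expand both derivatives by the chain rule, obtaining $(\partial_z P)(w,w)=d\,(\partial_1 f)(\mathbf{w})+a\,(\partial_2 f)(\mathbf{w})$ and $(\partial_z Q)(w,w)=d\,(\partial_1 f)(\mathbf{w})+b\,(\partial_3 f)(\mathbf{w})$, so what must be shown is
\begin{align*}
2d\,(\partial_1 f)(\mathbf{w})+a\,(\partial_2 f)(\mathbf{w})+b\,(\partial_3 f)(\mathbf{w})=0.
\end{align*}
Here the two hypotheses enter in complementary ways. Differentiating $f(w,az,bz,cz)=0$ with respect to $w$ gives $(\partial_1 f)(w,az,bz,cz)=0$, and specialising $w=dz$ shows $(\partial_1 f)(\mathbf{z})=0$ for all $z$, hence $(\partial_1 f)(\mathbf{w})=0$. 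Differentiating $f(dz,az,bz,w)=0$ with respect to $z$ gives $d\,(\partial_1 f)(dz,az,bz,w)+a\,(\partial_2 f)(dz,az,bz,w)+b\,(\partial_3 f)(dz,az,bz,w)=0$; specialising $w=cz$ and then setting $z=w$ yields $d\,(\partial_1 f)(\mathbf{w})+a\,(\partial_2 f)(\mathbf{w})+b\,(\partial_3 f)(\mathbf{w})=0$. Adding $d\,(\partial_1 f)(\mathbf{w})=0$ to this last relation produces exactly the required vanishing, which finishes the proof.

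\emph{Anticipated difficulty.} I do not expect a genuine obstacle here: the statement is in essence an exercise in the chain rule together with the divisibility remark. The only points needing care are mechanical — keeping straight which slot-derivatives of $f$ occur in each chain-rule expansion, getting the sign right in passing between the two opposite denominators $z-w$ and $w-z$ in the first display, and noticing that one differentiation of the first hypothesis is precisely what is needed to cancel the extra $d\,(\partial_1 f)(\mathbf{w})$ term left over by the relation coming from the second hypothesis.
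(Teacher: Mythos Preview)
Your proof is correct and uses the same underlying idea as the paper's: both sides are identified with partial derivatives via the chain rule, and the hypotheses are differentiated to produce the needed cancellations.

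The only difference is organisational. The paper takes $\partial_z$ for the first quotient and $\partial_w$ for the second, so that each side becomes $a\,\partial_2 f(\mathbf{w})$ plus one extra term; the extras are $d\,\partial_1 f(\mathbf{w})$ and $c\,\partial_4 f(\mathbf{w})$, which vanish by differentiating the first hypothesis in its free variable $w$ and the second in its free variable $w$, respectively. Thus both sides are shown to equal the common expression $\frac{\partial}{\partial y}f(dz,ay,bw,cw)\big|_{y=w,\,z=w}$. You instead take $\partial_z$ for both quotients, add them, and then need the three-term relation from differentiating the second hypothesis in $z$ together with $\partial_1 f(\mathbf{w})=0$. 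Both routes are equally valid; the paper's is a line shorter because its two uses of the hypotheses are exactly parallel.
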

\begin{proof} We have
\be
\frac{f(dz,az,bw,cw)}{z-w}\Bigl|_{z=w}
=\frac{\partial}{\partial z}f(dz,az,bw,cw)\Bigl|_{z=w}=
\frac{\partial}{\partial y}f(dz,ay,bw,cw)\Bigl|_{y=w}.
\en
On the other hand,
\be
\frac{f(dz,aw,bz,cw)}{w-z}\Bigl|_{z=w}
=\frac{\partial}{\partial w}f(dz,aw,bz,cw)\Bigl|_{z=w}=
\frac{\partial}{\partial y}f(dz,ay,bw,cw)\Bigl|_{y=w}.
\en
\end{proof}

\noindent
{\it Proof of Theorem \ref{sub alg}.}
We check the relations for the correlation functions. 
By the construction, the correlation functions of the fused currents are extracted from those of the standard currents in the way similar to obtaining $\langle K_i^{\pm,\perp}(z)\rangle $ from
$\langle E_i^\perp (z)F_i^\perp (w)\rangle$. For example, if
\be
\langle E_{n-1}^\perp (q_1z_1)E_0^\perp (z_2)\rangle =\frac{p(z_1,z_2)}{z_1-z_2}
\en
then
\be
\langle E_{n-1|0}^\perp (z)\rangle =-z^{-1}p(z,z).
\en
It enables us to check its properties. 

All quadratic relations and Serre relations are checked by a straightforward computation. The cases of $n=3$ and $n=2$ are slightly different but not much more difficult. 
For example, let $n=2$, and let us check the relation
\mdf{$\tilde g(z,w)E_{1|0}^{(1),\perp}(z) E_{1|0}^{(1),\perp}(w)
=\tilde g(w,z) E_{1|0}^{(1),\perp}(w) E_{1|0}^{(1),\perp}(z)$, where
\begin{align*}
\tilde g(z,w)=(z-\tilde q_1w)(z-\tilde q_2w)(z-\tilde q_3w)=(z-q_1^2w)(z-q_2w)(z-q_3q_1^{-1}w).
\end{align*}
}
We have
\be
\lefteqn{\langle E_1^\perp (q_1z')E_0^\perp (z)E_1^\perp (q_1w')E_0^\perp (w)\rangle = \frac{z'w'p(q_1z',z,q_1w',w)}{(z'-q_2w')(z-q_2w)}} \\
&&\times \frac{1}{(z'-w)(z'-q_3q_1^{-1}w)(z-q_1^2w')(z-q_1q_3w')(z-z')(q_1z'-q_3z)(w'-w)(q_1w'-q_3w)}.
\en
Then
\be
\langle E_{1|0}^{(1),\perp}(z) E_{1|0}^{(1),\perp}(w) \rangle = \frac{p(q_1z,z,q_1w,w)}{(z-q_2w)^2(z-w)(z-q_3q_1^{-1}w)(z-q_1^2w)(z-q_1q_3w)}.
\en
The factor $(z-q_1q_3w)$ and one factor of \mdf{$(z-q_2w)$} cancel due to the wheel conditions for the Laurent polynomial $p(z',z,w',w)$. Finally the pole $z-w$ is absent due to the skew-symmetry property of $p(z',z,w',w)$. 

The most difficult calculation is the $EF$ relation for the fused current. Here are some details in the case $n\geq 3$.
Consider
\be
\lefteqn{R(z_1,z_2,w_1,w_2):=\langle E_{n-1}^\perp (q_1z_2)E_0^\perp (z_1)F_0^\perp (w_1)F_{n-1}^\perp (q_1w_2)\rangle} \\ &&=
\frac{p(z_1,z_2,w_1,w_2)}{(z_1-z_2)(w_1-w_2)(z_1-\kappa^{-1}w_1)(z_1-\kappa w_1)(z_2-\kappa^{-1}w_2)(z_2-\kappa w_2)}.
\en
Then we have
\be
\langle E_{n-1}^\perp (q_1z_2)K_0^{+,\perp}(z_1)F_{n-1}^\perp (q_1w_2)\rangle=
\frac{\kappa^{-1}z_1^{-2}(q-q^{-1})p(z_1,z_2,\kappa z_1,w_2)}
{(z_1-z_2)(\kappa z_1-w_2)(\kappa^{-1}-\kappa)(z_2-\kappa^{-1}w_2)(z_2-\kappa w_2)}.
\en

But this correlation function does not have a pole at $z_1=z_2$, therefore
the Laurent polynomial $p(z_1,z_2,w_1,w_2)$ satisfies
\bea\label{zero 1}
p(z,z,\kappa z,w_2)=0.
\ena
Similarly, considering the correlation function $\langle E_0^\perp (z_1)F_0^\perp (w_1)K^{+,\perp}_{n-1}(q_1z_2)\rangle$, we obtain
\bea\label{zero 2}
p(z,\kappa^{-1} w,w,w)=0.
\ena
\mdf{From Lemma  \ref{simple lemma}, and the conditions} \Ref{zero 1}, \Ref{zero 2} we obtain that
\be
\frac{p(z_1,z_1,w_1,w_1)}{z_1-\kappa^{-1}w_1}\Bigl|_{w_1=\kappa z_1}
=\frac{p(z_1,z_2,\kappa z_1,\kappa z_2)}{z_2-z_1}\Bigl|_{z_2=z_1}.
\en
Using this identity, it is straightforward to check that
\be
\lefteqn{\frac{q-q^{-1}}{\kappa^2\mdf{z_1^3}}\on{Res}_{w_1=\kappa z_1}\on{Res}_{z_2=z_1}\on{Res}_{w_2=w_1}R(z_1,z_2,w_1,\mdf{w_2})}\\&&
=\mdf{\Big\{}\frac{q-q^{-1}}{\kappa z_2}\on{Res}_{w_2=\kappa z_2}
\frac{q-q^{-1}}{\kappa z_1}\on{Res}_{w_1=\kappa z_1}
\frac{q^{-1}(\kappa z_1-w_2)}{\mdf{\kappa z_1-q_2^{-1}w_2}}R(z_1,z_2,w_1,w_2)\mdf{\Big\}}\Big|_{z_2=z_1}.
\en
Similarly one obtains an equation involving residues at $z_1=\kappa w_1$ and $z_2=\kappa w_2$. 

These two equations are equivalent to the needed relation
\be
\lefteqn{[E_{n-1|0}^\perp (z),F_{n-1|0}^\perp (w)]}\\&&=\frac{1}{q-q^{-1}}(\delta(\kappa^{-1}w/z)K_{n-1}^{+,\perp}(q_1 z)K_0^{+,\perp}(z)-
\delta(\kappa^{-1}z/w)K_{n-1}^{-,\perp}(q_1w)K_0^{-,\perp}(w)).\qed
\en

We denote the subalgebra of $\mathcal E_n$ described in the theorem by $\mathcal E_{n-1}^{n-1|0}$.

Note that Theorem \ref{sub alg} only shows that $\mathcal E_{n-1}^{n-1|0}$ is a factor of toroidal algebra 
$\mathcal E_{n-1}$ with parameters $\tilde q_1,\tilde q_2,\tilde q_3$. However, using the classical limit, see Section \ref{classical}, we obtain that in fact the algebra $\mathcal E_{n-1}^{n-1|0}$ has the same size as $\mathcal E_{n-1}$ and therefore is isomorphic to $\mathcal E_{n-1}(\tilde q_1,\tilde q_2,\tilde q_3)$.
Note that while $\mathcal E_{n-1}^{n-1|0}$ is a subalgebra of a completion of $\mc E_n$, its classical 
limit is a subalgebra of uncompleted classical limit of $\mc E_n$, see Section \ref{classical}.

\medskip

Note also that if $V$ is an admissible representation for $\mc E_n$ then $V$ is an admissible representation of
$\mathcal E_{n-1}^{n-1|0}$. Therefore, 
Theorem \ref{sub alg} justifies the recursive use of the construction of the fused currents,
see Section \ref{fused current}.

\medskip
Let $k\in\{1,\dots, n-1\}$.

Using the theorem recursively, we obtain subalgebras $\mathcal E_k^{k|k+1|\dots|n-1|0}$ generated by currents  $\tilde E_i^\perp (z)=E_i^\perp \bigl(q_1^{\frac{n-k}{k}i}z\bigr)$, 
$\tilde F_i^\perp (z)=F_i^\perp \bigl(q_1^{\frac{n-k}{k}i}z\bigr)$, 
$\tilde K_i^{\pm,\perp}(z)=K_i^{\pm,\perp}\bigl(q_1^{\frac{n-k}{k}i}z\bigr)$, $i=1,\dots,k-1$, and $\tilde E_0^\perp (z)=E_{k|k+1|\dots|n-1|0}^\perp (z)$, $\tilde F_0^\perp (z)=F_{k|k+1|\dots|n-1|0}^\perp (z)$, $\tilde K_0^{\pm,\perp}(z)=K_{k|k+1|\dots|n-1|0}^{\pm,\perp}(z)$. 

The subalgebra $\mathcal E_k^{k|k+1|\dots|n-1|0}$ is isomorphic to 
$\mathcal E_{k}(\tilde q_1,\tilde q_2,\tilde q_3)$ with $\tilde q_i$ given by
\be
\tilde q_1=q_1\cdot q_1^{\frac{n-k}{k}}, \qquad \tilde q_2=q_2, \qquad \tilde q_3=q_3\cdot q_1^{-\frac{n-k}{k}}.
\en

In the same way, we obtain subalgebras $\mathcal E_k^{n-k|\dots|1|0}$ which are generated by currents $\tilde E_i^\perp (z)=E_i^\perp \bigl(q_3^{\frac{n-k}{k}i}z\bigr)$, 
$\tilde F_i^\perp (z)=F_i^\perp \bigl(q_3^{\frac{n-k}{k}i}z\bigr)$, 
$\tilde K_i^{\pm,\perp}(z)=K_i^{\pm,\perp}\bigl(q_3^{\frac{n-k}{k}i}z\bigr)$, $i=n-k+1,\dots,n-1$, and $\tilde E_0^\perp (z)=E_{n-k|\dots|1|0}^\perp (z)$, $\tilde F_0^\perp (z)=F_{n-k|\dots|1|0}^\perp (z)$, $\tilde K_0^{\pm,\perp}(z)=K_{n-k|\dots|1|0}^{\pm,\perp}(z)$. 
The subalgebra $\mathcal E_k^{n-k|\dots|1|0}$ is isomorphic to 
$\mathcal E_{k}(\tilde q_1,\tilde q_2,\tilde q_3)$ with $\tilde q_i$ given by
\be
\tilde q_1=q_1\cdot q_3^{-\frac{n-k}{k}}, \qquad \tilde q_2=q_2, \qquad \tilde q_3=q_3\cdot q_3^{\frac{n-k}{k}}.
\en
\mdf{We abbreviate $\mathcal E_1^{n-1|\dots|1|0}$ to $\mathcal E_1^{\mdf{n-1||0}}$.}

In Section \ref{classical} below we explain that,  
in the classical limit, the embedding of the subalgebra
$\mathcal E_k^{k|k+1|\dots|n-1|0}$ into the completion of  $\mc E_n$  
corresponds to the embedding of submatrices into the upper-left corner. 
Similarly, the embedding $\mathcal E_k^{n-k|\dots|1|0}$ to the completion of $\mc E_n$ corresponds to the embedding of submatrices into the lower-right corner.

\subsection{Computation of $\tilde H_{i,1}$}\label{Hi}
The algebra $\mc E_{n-1}^{n-1|0}$, see Theorem \ref{sub alg}, 
is defined in terms of the perpendicular generators. It is not easy 
to write the standard 
(non-perpendicular) generators of $\mc E_{n-1}^{n-1|0}$ in terms of standard generators of $\mc E_n$ in general. 
In this section we compute such a formula for $\tilde H_{i,1}$. This is used in Section \ref{branching sec}.

\begin{lem}\label{H1 lem}
For $i=0,\dots,n-2$ we have
\begin{align*}
&\tilde{H}_{i,1}=(-q)q_1^{-\frac{i}{n-1}} \lim_{s\to\infty} q_1^{-s}
T_{n-1|0}^s\left(H_{i,1}+\delta_{i,0}q_1^{-1}H_{n-1,1}\right)\,.
\end{align*}
\end{lem}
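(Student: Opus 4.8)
The plan is to work with the formula
\[
E_{n-1|0,k}^\perp =\lim_{s\to\infty}q_1^{-s-k}T_{n-1|0}^s\!\left(E_{n-1,k}^\perp E_{0,0}^\perp\right)
\]
and its analogue for $F$, and then apply the automorphism $\theta$ to convert the perpendicular generators of $\mathcal E_{n-1}^{n-1|0}$ into standard ones. First I would recall that $\tilde H_{i,1}=\theta(\widetilde{H^\perp}_{i,1})$, where $\widetilde{H^\perp}_{i,1}$ is the Drinfeld generator built from the fused currents via the analogue of formula \eqref{thetaH1} or \eqref{thetaH01} applied inside the algebra $\mathcal E_{n-1}(\tilde q_1,\tilde q_2,\tilde q_3)$. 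For $i\neq 0$ this involves $\tilde E_i^\perp(z)=E_i^\perp(q_1^{i/(n-1)}z)$ and the other ordinary currents, while the $i=0$ case and the cases where the Serre-type nested commutator meets the fused current $E_{n-1|0}^\perp(z)$ are the ones that produce the $\delta_{i,0}$ correction term.

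The key computational step is to evaluate the nested $q$-commutator expression for $\widetilde{H^\perp}_{i,1}$ (the $\mathcal E_{n-1}$-version of \eqref{thetaH1}) on the fused currents and show that, because $T_{n-1|0}$ acts by scalar multiplication on all the relevant currents ($E_j^\perp(z)\mapsto E_j^\perp(z)$ for $j\neq 0,n-1$, and $E_0^\perp(z)\mapsto z^{-1}E_0^\perp(z)$, $E_{n-1}^\perp(z)\mapsto zE_{n-1}^\perp(z)$), the operator $T_{n-1|0}$ commutes past the whole nested commutator up to an overall power of $z$ and a power of $q_1$. Concretely, I would commute $T_{n-1|0}^s$ through the definition: the fused current $\tilde E_0^\perp(z)$ is itself a $T_{n-1|0}^s$-limit of $E_{n-1,k}^\perp E_{0,0}^\perp$, so inside the nested commutator all the ordinary currents are $T_{n-1|0}$-invariant and only the one appearance of the fused current carries the $T_{n-1|0}^s$. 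Collecting the $q_1^{-s}$ normalizations and the $q_1^{i/(n-1)}$ shifts from the definitions of $\tilde E_i^\perp$ gives the prefactor $(-q)q_1^{-i/(n-1)}$; the sign and the $-q$ come from the $(-d)^{\pm i}$ and $q^{\mp 2}$-bracket normalizations in \eqref{thetaH1}–\eqref{thetaH01} specialized to the $\tilde q_i$ parameters. The extra $\delta_{i,0}q_1^{-1}H_{n-1,1}$ arises because when $i=0$ the relevant nested commutator formula \eqref{thetaH01} for the fused algebra produces a term that, after fusing $E_{n-1}^\perp$ and $E_0^\perp$, splits into a contribution proportional to $H_{0,1}$ plus one proportional to $H_{n-1,1}$ with the shift $q_1^{-1}$ coming from the $q_1$-shift in the fusion $E_{n-1}^\perp(q_1z')E_0^\perp(z)$.

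I would organize this as: (1) write down $\widetilde{H^\perp}_{i,1}$ explicitly from the $\mathcal E_{n-1}$-versions of \eqref{thetaH1} and \eqref{thetaH01}; (2) substitute the definitions of $\tilde E_i^\perp$ and $\tilde E_0^\perp$; (3) pull the $T_{n-1|0}^s$-limit out to the front using that all the other currents are $T_{n-1|0}$-fixed and that $T_{n-1|0}$ is an algebra automorphism, so it distributes over the nested commutator; (4) match the powers of $q_1$ and the signs, using $\tilde q_1=q_1 q_1^{1/(n-1)}$, $\tilde q_3=q_3 q_1^{-1/(n-1)}$; (5) apply $\theta$ termwise, using $\theta(H^\perp_{i,1})=H_{i,1}$-type identities adapted to $\mathcal E_{n-1}^{n-1|0}\subset \mathcal E_n$, i.e. that $\theta$ intertwines the perpendicular and standard Drinfeld generators of the subalgebra. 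The main obstacle I anticipate is step (4)–(5): carefully tracking how the scalar $q_1$-factors produced by $T_{n-1|0}$ acting on $E_0^\perp$ and $E_{n-1}^\perp$ interact with the $\frac{i}{n-1}$-rescalings and with the specialization of $\theta$ to the fused parameters $\tilde q_i$, and verifying that the $i=0$ Serre-type expression genuinely yields the single extra term $\delta_{i,0}q_1^{-1}H_{n-1,1}$ and no further corrections — this requires a careful bookkeeping of the commutator $[H_{n-1,1}, E_{n-1|0}^\perp(z)]$ versus $[H_{0,1}, E_{n-1|0}^\perp(z)]$, which is best done by first establishing how each ordinary $H_{j,1}$ acts on the fused current $E_{n-1|0}^\perp(z)$ directly from \eqref{main def}.
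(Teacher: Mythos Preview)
Your approach matches the paper's for the case $1\le i\le n-2$: write $\tilde H_{i,1}$ as the $\mathcal E_{n-1}$-analogue of the nested commutator in \eqref{thetaH1} (applied after $\theta^{-1}$, so that $H_{i,1}$ is a nested $q$-commutator of $E^\perp_{j,0}$'s), substitute $\tilde E^\perp_{j,0}=E^\perp_{j,0}$ for $1\le j\le n-2$ and the $T_{n-1|0}^s$-limit formula for $\tilde E^\perp_{0,0}$, pull the limit out, and recognize the resulting nested commutator as $(-d)^i H_{i,1}$. One clarification: your step~(5) is superfluous. You never actually apply $\theta$; rather, formula \eqref{thetaH1} with $\theta^{-1}$ applied to both sides already expresses the \emph{standard} generator $H_{i,1}$ of $\mathcal E_n$ as a nested commutator of the $E^\perp_{j,0}$'s, so once you land on that commutator you are done. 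There is no ``$\theta$ intertwines the generators of the subalgebra with those of $\mathcal E_n$'' step.

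The genuine gap is the case $i=0$. Your sketch says the extra $q_1^{-1}H_{n-1,1}$ ``arises'' from the fusion and proposes to verify it by computing how each $H_{j,1}$ acts on the fused current. That is not how the paper proceeds, and it is not clear your route closes. The paper instead works backwards from the target: it rewrites $H_{n-1,1}$ using the quadratic relation $[E^\perp_{0,0},E^\perp_{1,0}]_{q^{-1}}=-d[E^\perp_{1,1},E^\perp_{0,-1}]_{q^{-1}}$ so that both $H_{0,1}$ and $H_{n-1,1}$ are nested commutators involving the same block $X=[\cdots[E^\perp_{1,1},E^\perp_{2,0}]_{q^{-1}},\cdots,E^\perp_{n-2,0}]_{q^{-1}}$, then expands $(-d)^{n-1}(H_{0,1}+q_1^{-1}H_{n-1,1})$ into three explicit terms in $X$, $E^\perp_{n-1,0}$, $E^\perp_{0,-1}$. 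Only after this algebraic preparation does the $T_{n-1|0}^s$-limit become transparent, collapsing to $(-d)^{-n+1}[X,E^\perp_{n-1|0,-1}]_{q^{-2}}$, which is precisely $\tilde H_{0,1}$ up to the stated prefactor. Without this rewriting you would be trying to match a single nested commutator (for $\tilde H_{0,1}$) against the limit of a sum of two differently-structured nested commutators (for $H_{0,1}$ and $H_{n-1,1}$), and your proposed bookkeeping via the adjoint action of $H_{j,1}$ on $E^\perp_{n-1|0}(z)$ does not obviously produce that identification.
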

\begin{proof}
By the definition of the symbol $\perp$
we have \mdf{$\tilde{H}_{i,1}=\tilde{\theta}\bigl(\tilde{H}^{\perp}_{i,1}\bigr)$.}

First assume $n\geq 3$.
Set $\tilde{d}=d q_1^{\frac{1}{n-1}}$. 
Then, for $1\le i\le n-2$, we have from \eqref{thetaH1}
\mdf{
\begin{align*}
\tilde{H}_{i,1}
&=(-\tilde{d})^{-i}
[[\cdots [ [\cdots [\tilde{E}^{\perp}_{0,0},\tilde{E}^\perp_{n-2,0}]_{q^{-1}},
\cdots ,\tilde{E}^\perp_{i+1,0}]_{q^{-1}},\tilde{E}^\perp_{1,0}]_{q^{-1}},
\cdots,\tilde{E}^\perp_{i-1,0}]_{q^{-1}},\tilde{E}^\perp_{i,0}]_{q^{-2}}\,.
\end{align*}
}
Substituting $\tilde{E}^\perp_{j,0}=E^\perp_{j,0}$ ($1\le j\le n-2$) and
\begin{align*}
\tilde{E}^\perp_{0,0}=(-q)\lim_{s\to\infty}q_1^{-s}T_{n-1|0}^s[E^\perp_{0,0},E^\perp_{n-1,0}]_{q^{-1}}\,,
\end{align*}
we find 
\begin{align*}
\tilde{H}_{i,1}= (-\tilde{d})^{-i}(-q)\lim_{s\to\infty}q_1^{-s}T_{n-1|0}^s
\bigl((-d)^iH_{i,1}\bigr)\,
\end{align*}
which gives the desired result. 

Consider the case $i=0$. Using the quadratic relation 
$[E^\perp_{0,0},E^\perp_{1,0}]_{q^{-1}}=-d[E^\perp_{1,1},E^\perp_{0,-1}]_{q^{-1}}$
we rewrite $H_{n-1,1}$ as follows.
\begin{align*}
H_{n-1,1}=(-d)^{-n+2}[[[\cdots[E^\perp_{1,1},E^\perp_{2,0}]_{q^{-1}},\cdots,
E^\perp_{n-2,0}]_{q^{-1}},
E^\perp_{0,-1}]_{q^{-1}},E^\perp_{n-1,0}]_{q^{-2}}\,. 
\end{align*}
Setting $X=[[E^\perp_{1,1},E^\perp_{2,0}]_{q^{-1}},\cdots,E^\perp_{n-2,0}]_{q^{-1}}$ and 
using \eqref{thetaH01} we obtain 
\begin{align*}
&(-d)^{n-1}\bigl(H_{0,1}+q_1^{-1}H_{n-1,1}\bigr) 
=[[X,E^\perp_{n-1,0}]_{q^{-1}},E^\perp_{0,-1}]_{q^{-2}}
-q [[X,E^\perp_{0,-1}]_{q^{-1}},E^\perp_{n-1,0}]_{q^{-2}}
\\
&\qquad\qquad\quad\mdf{=X[E^\perp_{n-1,0},E^\perp_{0,-1}]_q-q^{-2}[E^\perp_{n-1,0},E^\perp_{0,-1}]_{q^{-1}}X
-(1-q^{-2})E^\perp_{0,-1}XE^\perp_{n-1,0}.}
\end{align*}
\mdf{
It follows that
\begin{align*}
\lim_{s\rightarrow\infty}q_1^{-s}T^s_{n-1|0}(H_{0,1}+q_1^{-1}H_{n-1,1})
=(-d)^{-n+1}[X,E^\perp_{n-1|0,-1}]_{q^{-2}}.
\end{align*}
We obtain the statement by noting that
$\tilde{E}^\perp_{1,1}=q_1^{-\frac{1}{n-1}}E^\perp_{1,1}$.}

The case $n=2$ can be checked directly, by noting 
that $\tilde{H}_{0,1}=\tilde{E}^\perp_{0,0}=
E^\perp_{1|0,0}$.
\end{proof}

\subsection{Commuting subalgebras}\label{commute sec}
We show that the constructed "upper left corner" subalgebras  commute with "lower right corner" subalgebras.
\begin{thm}\label{commute} 
For each $k$, the $\mathcal E_n$ subalgebras $\mathcal E_{n-k}^{k|\dots|1|0}$ 
and $\mathcal E_k^{k|k+1|\dots|n-1|0}$ commute.
\end{thm}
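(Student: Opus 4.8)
The plan is to follow the same correlation-function strategy used in the proof of Theorem~\ref{sub alg}. Fix an arbitrary admissible $\mc E_n$-module $V$ and arbitrary graded basis vectors $v_1,v_2$. The two subalgebras in question are generated (in perpendicular generators) by the fused currents $E^\perp_{k|k+1|\dots|n-1|0}(z)$, $F^\perp_{k|k+1|\dots|n-1|0}(z)$, $K^{\pm,\perp}_{k|k+1|\dots|n-1|0}(z)$ together with $E^\perp_i(z),F^\perp_i(z),K^{\pm,\perp}_i(z)$ for $1\le i\le k-1$ (the ``upper left corner'' $\mc E_k^{k|\dots|n-1|0}$), and by $E^\perp_{k|\dots|1|0}(z)$, $F^\perp_{k|\dots|1|0}(z)$, $K^{\pm,\perp}_{k|\dots|1|0}(z)$ together with $E^\perp_i(z),F^\perp_i(z),K^{\pm,\perp}_i(z)$ for $k+1\le i\le n-1$ (the ``lower right corner'' $\mc E_{n-k}^{k|\dots|1|0}$). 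I would first reduce commutativity to a short list of mixed correlation-function identities: it suffices to show that each generating current of the first algebra commutes with each generating current of the second, and by the symmetry automorphisms $\tau$, $\iota$, $\theta$ relating the various fused currents (as in Section~\ref{others}), it is enough to treat a few representative pairs — e.g.\ $E^\perp_i(z)$ (for $1\le i\le k-1$) against $E^\perp_j(w)$ (for $k+1\le j\le n-1$), which is immediate from the Serre relation $[E^\perp_i(z),E^\perp_j(w)]=0$ since $i\not\equiv j,j\pm1$; and then the genuinely new pairs involving at least one long fused current, such as $E^\perp_{k|k+1|\dots|n-1|0}(z)$ against $E^\perp_{k|\dots|1|0}(w)$, against $E^\perp_j(w)$ for $k+1\le j\le n-1$, and the analogous $E$--$F$ and $F$--$F$ pairs.

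For a pair of currents one of which is an ordinary generator $X^\perp_j(w)$ with $j$ outside the ``support'' of the long fused current, I would use the defining iterated-limit formula (cf.\ \eqref{limit def} and its analogues): $E^\perp_{k|k+1|\dots|n-1|0}(z)$ is built from $E^\perp_k,E^\perp_{k+1},\dots,E^\perp_{n-1},E^\perp_0$ with appropriately shifted arguments, and $X^\perp_j(w)$ with $k+1\le j\le n-1$ commutes, up to the $g_{i,j}$-quadratic relations, with each constituent whose index differs from $j,j\pm1$; the only constituents it interacts with are $E^\perp_{j-1},E^\perp_j,E^\perp_{j+1}$, and one computes that the quadratic $g$-factors produced by moving $X^\perp_j(w)$ past the chain telescope, so that in the $z'\to z$ coincidence limit defining the fused current all the would-be poles cancel and the mixed correlation function is a Laurent polynomial symmetric under exchanging the two orders — which, by the dictionary of Section~\ref{cor func} and the equality-as-rational-functions remark in Section~\ref{commute sec}, forces commutativity. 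The more delicate case is two long fused currents $E^\perp_{k|\dots|n-1|0}(z)$ and $E^\perp_{k|\dots|1|0}(w)$: here I would write out the correlation function $\langle E^\perp_k(\dots z)\cdots E^\perp_0(z)\,E^\perp_k(\dots w)\cdots E^\perp_0(w)\rangle$ as $p/\prod(\text{linear factors})$ with $p$ a Laurent polynomial, record the wheel/vanishing conditions on $p$ coming from the Serre relations among the constituents, and check that after taking the two coincidence limits (one for each fused current) the surviving denominator factors all cancel against zeros of $p$; the key cancellations are of the same flavour as the ``$(z-q_1q_3w)$ and one $(z-q_2w)$ cancel due to the wheel conditions'' step in the proof of Theorem~\ref{sub alg}, and the residual potential pole at $z=w$ vanishes by the skew-symmetry of $p$ in the repeated constituent indices. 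The $E$--$F$ mixed relation is handled as in Theorem~\ref{sub alg}, by extracting residues to pass from $\langle E^\perp\cdots F^\perp\cdots\rangle$ to $\langle E^\perp\cdots K^{+,\perp}\cdots\rangle$ and using absence of spurious poles; since the $E$-indices of the first algebra and the $F$-indices of the second are disjoint and non-adjacent except possibly at the shared color $0$, and the $K^+_i(z)$ of the second algebra have no poles against the $E_j(z)$ of the first when $i\not\equiv j,j\pm1$, the relevant residues simply vanish.

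The main obstacle I expect is bookkeeping at the ``junction'' — the colors $k-1,k,k+1$ and the color $0$ — where the two corner subalgebras are closest to overlapping: the long fused current $E^\perp_{k|\dots|n-1|0}(z)$ contains a factor with color $k$ (namely $E^\perp_k$), and the other long current $E^\perp_{k|\dots|1|0}(w)$ also contains $E^\perp_k$, as well as $E^\perp_{k-1}$; similarly both chains pass through color $0$. So one cannot argue purely by ``indices are non-adjacent''; one must show that the interaction of the overlapping constituents, which is governed by the genuine quadratic and wheel relations of $\mc E_n$, degenerates to triviality precisely because of the extra linear factors inserted in the fused-current limits (the $(1-z/z')$, $(1-q_3z/q_1z')$, etc., prefactors in \eqref{E21}, \eqref{E31} and \eqref{n-0}). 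Concretely, I would set up an induction on $k$: the case $k=1$ is essentially the case handled directly near Theorem~\ref{sub alg} (the ``$\mc E_1^{n-1||0}$ commutes with $\mc E_{n-1}^{1|0}$'' statement), and the inductive step peels off one more color, using Theorem~\ref{sub alg} to re-express $\mc E_k^{k|\dots|n-1|0}$ as the $(n'-1|0)$-construction inside $\mc E_{k+1}^{k+1|\dots|n-1|0}\cong\mc E_{k+1}(\tilde q_i)$ and reducing to the already-established commutativity one level down, together with the observation that $T_{k|\dots}$-type automorphisms used in the fused-current definitions fix the generators of the opposite corner. A clean alternative, which I would fall back on if the junction combinatorics become unwieldy, is to prove the statement first in the classical (Lie-algebra) limit $q_2\to1$ of Section~\ref{classical}, where the two corner subalgebras literally become $\mathbb{M}_{n-k}$- and $\mathbb{M}_k$-valued current algebras sitting in disjoint (commuting) off-diagonal blocks of $\mathbb{M}_n\otimes\C[Z^{\pm1},D^{\pm1}]$ and hence manifestly commute, and then lift: since $\mc E_n$ is a flat quantization of $U\mc L'_n(d)$ and the relevant commutators are homogeneous elements of the completion, a commutator that vanishes classically and vanishes in all admissible representations (which is what the correlation-function computation above verifies directly) must vanish in $\mc E_n$ by the argument recalled at the start of Section~\ref{cor func}.
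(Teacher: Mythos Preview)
Your main approach --- reduce to pairs of generating currents and verify commutativity by showing that the mixed correlation functions have no surviving poles, using the wheel conditions and the residue-to-$K^{+,\perp}$ argument exactly as in Theorem~\ref{sub alg} --- is the paper's approach. The paper carries out two representative computations (an ordinary $E^\perp_1(z)$ against the fused $E^\perp_{k|\dots|1|0}(w)$, and $F^\perp_1(z)$ against the same fused current), working precisely at the ``junction'' you worry about, and shows that the wheel conditions and the vanishing of the residue correlator kill the potential poles directly; your induction on $k$ and classical-limit fallback are not needed.
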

\begin{proof} The theorem is proved by the same techniques as Theorem \ref{sub alg}.
 
For example, let us check the commutativity of $E_1^\perp (z)\in \mathcal E_k^{k|k+1|\dots|n-1|0}$ with 
$E_{k|\dots|1|0}^\perp (w)\in\mathcal E_{n-k}^{k|\dots|1|0}$. We consider the correlation function
\be
\lefteqn{\langle E_1^\perp (z)E_k^\perp (q_3^{k}w_k)E_{k-1}^\perp (q_3^{k-1}\mdf{w_{k-1}})
\dots E_0^\perp (w)\rangle}\\&&
=\frac{p(z,w_k,\dots,w_1,w)}{(z-q_2q_3w_1)(z-q_1q_3^2w_2)(z-q_1^{-1}w)(w_k-w_{k-1})\dots(w_1-w)}.
\en
We need to show that the poles at $z=q_2q_3w_1$, $z=q_1q_3^2w_2$ and $z=q_1^{-1}w$ disappear when we multiply
by $(w_k-w_{k-1})\dots(w_1-w)$ and set $w_k=w_{k-1}=\dots=w_1=w$.
But this follows from the wheel conditions for the Laurent polynomial $p(z,w_k,\dots,w_1,w)$.

Let us check the commutativity of $F_1^\perp (z)\in \mathcal E_k^{k|k+1|\dots|n-1|0}$ with 
$E_{k|\dots|1|0}^\perp (w)\in\mathcal E_{n-k}^{k|\dots|1|0}$. 
We consider the correlation function
\be
\lefteqn{\langle F_1^\perp (z)E_k^\perp (q_3^{k}w_k)E_{k-1}^\perp (q_3^{k-1}w_1)\dots E_0^\perp (w)\rangle}\\&&
=\frac{p(z,w_k,\dots,w_1,w)}{(z-\kappa^{-1} q_3w_1)(z-\kappa q_3w_1)(w_k-w_{k-1})\dots(w_1-w)}.
\en
We need to show that the poles at $z=\kappa^{-1}q_3w_1$, $z=\kappa q_3w_1$ disappear when we multiply
by $(w_k-w_{k-1})\dots(w_1-w)$ and set $w_k=w_{k-1}=\dots=w_1=w$. We have
\be
\lefteqn{\on{Res}_{z=q_3\kappa w_1}\langle F_1^\perp (z)E_k^\perp (q_3^{k}w_k)\dots E_2^\perp (q_3^2w_2)E_1^\perp (w_1q_3)E_0^\perp (w)\rangle}\\&&
=-\frac{q_3\kappa w_1^{-1}}{q-q^{-1}}
\langle E_k^\perp (q_3^{k}w_k)\dots E_2^\perp (q_3^2w_2) K_1^{+,\perp}(q_3w_1)E_0^\perp (w)\rangle\\
&&=-q_3^{-1}d^{-1}\frac{q_3\kappa w_1}{q-q^{-1}}\frac{q_1q_3w_1-w}{w_1-w}
\langle E_k^\perp(q_3^{k}w_k)
\dots E_2^\perp (q_3^2w_2) E_0^\perp (w) K_1^{+,\perp}(q_3w_1)\rangle .
\en
In the last expression there is no pole at $w_2=w_1$. It implies that we have the identity
$p(\kappa q_3w_1,w_k,\dots,w_1,w_1,w)=0$ and the pole $z=\kappa q_3w_1$ disappears.

We omit further details.
\end{proof}

\subsection{Classical limit.}\label{classical}
In this subsection we explain  the meaning of the fused currents 
in the classical limit. 

The quantum toroidal $\mathfrak{gl}_n$ algebra $\mathcal{E}_n=\mathcal{E}_n(q_1,q_2,q_3)$
contains two parameters $q,d$. By the classical limit we mean $q\to 1$. 
The algebra $\mathcal{E}_n(q_1,q_2,q_3)$ in the limit is known to have the following 
description.  

Consider the algebra $\mathcal{A}_n(d)=\mathbb{M}_n\otimes\C[Z^{\pm1},D^{\pm1}]$,  
where $\mathbb{M}_n$ stands for the algebra of $n\times n$ matrices,  
and 
$\C[Z^{\pm 1},D^{\pm1}]$ is the algebra generated by symbols $Z,D$ satisfying $DZ=d^{-n}ZD$. 
We regard  $\mathcal{A}_n(d)$ as a Lie algebra by commutators.
Let $\mathcal{L}_n(d)=\mathcal{A}_n(d)\oplus\C c_1\oplus \C c_2$ be its two-dimensional central extension,
where the Lie bracket is given by
\begin{align*}
[M_1\otimes Z^{r_1}D^{s_1}, M_2\otimes Z^{r_2}D^{s_2}] 
&=\mdf{\left(d^{-nr_2s_1}M_1M_2-d^{-nr_1s_2}M_2M_1\right)}\otimes Z^{r_1+r_2}D^{s_1+s_2}
\\
&
+\delta_{r_1+r_2,0}\delta_{s_1+s_2,0}\, d^{-nr_2s_1}\mathrm{tr}\bigl(M_1M_2\bigr)\cdot
(r_1c_1+s_1c_2)\,, 
\end{align*}
for $M_i\in \mathbb{M}_n$, $r_i,s_i\in\Z$, $i=1,2$. 
Let further $\mathcal{L}_n'(d)$ be the Lie subalgebra of $\mathcal{L}_n(d)$ spanned by $c_1,c_2$ and 
elements $\sum M_{r,s}Z^rD^s\in \mathcal{A}_n(d)$ such that $\mathrm{tr}(M_{0,0})=0$.
The classical limit of $\mathcal{E}_n(q_1,q_2,q_3)$ is 
the universal enveloping algebra $U \mathcal{L}_n'(d)$. 

To see this explicitly, set $K_i^\perp=q^{H_{i,0}^\perp }$, $\kappa=q^{-c_1}$,  $c_2=\sum_{i=0}^{n-1}H_{i,0}^\perp$. 
It is then straightforward to check that the limit $q\to 1$ of 
the defining relations 
for the generators $E_{i,k}^\perp ,F_{i,k}^\perp ,H_{i,k}^\perp $ of $\mathcal{E}_n(q_1,q_2,q_3)$
are satisfied by the following elements of $\mathcal{L}_n'(d)$: 
\begin{align}
&\bar{E}_{i,k}^\perp =
\begin{cases}
E_{i,i+1}\otimes Z^k\, d^{-ik}& (1\le i\le n-1)\,, \\
E_{n,1}\otimes D Z^k\,  & (i=0)\,,\\
\end{cases}
\label{Ebar}\\
&\bar{F}_{i,k}^\perp =
\begin{cases}
E_{i+1,i}\otimes Z^k\, d^{-ik}& (1\le i\le n-1)\,, \\
E_{1,n}\otimes  Z^k D^{-1}\, & (i=0)\,,\\
\end{cases}
\nn\\
&\bar{H}_{i,k}^\perp =
\begin{cases}
\bigl(E_{i,i}-E_{i+1,i+1}\bigr)\otimes Z^k\, d^{-ik}& (1\le i\le n-1)\,, \\
\bigl(d^{-nk}E_{n,n}-E_{1,1}\bigr)\otimes  Z^k \,+c_2\delta_{k,0} & (i=0)\,.\\
\end{cases}
\nn
\end{align}
Here $E_{i,j}\in\mathbb{M}_n$ are the matrix units. 
As it is noted in \cite{M99}, \mdf{the} automorphism $\theta\in \mathrm{Aut}\,\mathcal{E}_n$ reduces 
in the classical limit to  
the Lie algebra automorphism $\bar{\theta}\,\in\mathrm{Aut}\,\mathcal{L}_n'(d)$ given by the rule
\begin{align*}
Z\mapsto D,\quad D \mapsto Z^{-1},\quad c_1\mapsto c_2,\quad c_2\mapsto -c_1,
\end{align*}
and $M\mapsto M$ for $M\in\mathbb{M}_n$.

Let us examine the classical limit 
of the fused currents. For simplicity we consider the case 
$n\ge 3$. 
Recall that the Fourier components of the current $E_{n-1|0}^\perp(z)$ are defined to be
\begin{align*}
E_{n-1|0,r}^\perp =\lim_{s\to \infty} q_1^{-r-s}E_{n-1,r+s}^\perp E_{0,-s}^\perp 
=\lim_{s\to \infty} q_1^{-r-s}[E_{n-1,r+s}^\perp ,E_{0,-s}^\perp ]\,,
\end{align*}
where the second equality is due to the meaning of the completion. 
In view of \eqref{Ebar}, the classical limit of this expression is 
\begin{align*}
\bar{E}_{n-1|0,r}^\perp =d^{-r-s}[\bar{E}_{n-1,r+s}^\perp ,\bar{E}_{0,-s}^\perp]=E_{n-1,1}\otimes D Z^r\,.
\end{align*}
This holds true for all $s$, without taking the limit $s\to\infty$ nor 
introducing the completion. 
Similarly the classical limit of $F_{n-1|0,r}^\perp
$ is 
\begin{align*}
\bar{F}_{n-1|0,r}^\perp =d^{s}[\bar{F}_{0,r+s}^\perp ,\bar{F}_{n-1,-s}^\perp ]=E_{1,n-1}\otimes Z^r D^{-1}\,.
\end{align*}
These elements along with the other generators $\bar{E}_{i,r}^\perp d^{-ir/(n-1)}$,  $\bar{F}_{i,r}^\perp d^{-ir/(n-1)}$
for $1\le i\le n-2$ generate a subalgebra of $\mathcal{L}'_{n}(d)$ isomorphic to
$\mathcal{L}'_{n-1}(\tilde{d})$, where $\tilde{d}=d^{n/(n-1)}$
 (note that $D Z= \tilde{d}^{-n+1}Z D$). 
This is nothing but the one induced from the upper left corner embedding of matrix algebras
\begin{align*}
 \mathbb{M}_{n-1}\hookrightarrow \mathbb{M}_{n},\quad M'\mapsto \Bigl(\begin{matrix}
						     M' & 0\\
                                                     0 & 0 \\
						    \end{matrix}\Bigr)\,.
\end{align*}
In a similar manner the 
classical counterparts of $E_{\mdf{n-1||0},r}^\perp $, $F_{\mdf{n-1||0},r}^\perp$
generate a subalgebra $\mathcal{L}'_{1}(d^n)$ commuting with $\mathcal{L}'_{n-1}(\tilde{d})$. 
The former corresponds to the bottom right corner embedding 
\begin{align*}
 \mathbb{M}_{1}\hookrightarrow \mathbb{M}_{n},\quad M''\mapsto \Bigl(\begin{matrix}
						     0 & 0\\
                                                     0 & M'' \\
						    \end{matrix}\Bigr)\,.
\end{align*}
\section{Branching rules}\label{branching sec}
In this section we study the restriction of various $\mathcal E_n$ modules to the subalgebra
$\mc E_{n-1}^{n-1|0}\otimes \mc E_1^{\mdf{n-1||0}}$.


The logic of the computation is the same in all cases, but we start with Fock spaces, and specifically with $n=2$ where the situation is the easiest to describe.
 
\subsection{Fock modules for $\mathcal E_2$}\label{fock branch sec}
In this section we study decompositions of the modules of level $q$ for $\mathcal E_2$. 

 Consider the module $\mathcal F^{(0)}(u)$, see Section \ref{rep sec}. This module has a basis labeled by partitions. In addition, it is convenient to represent this module by the following familiar picture, see Figure 1. 

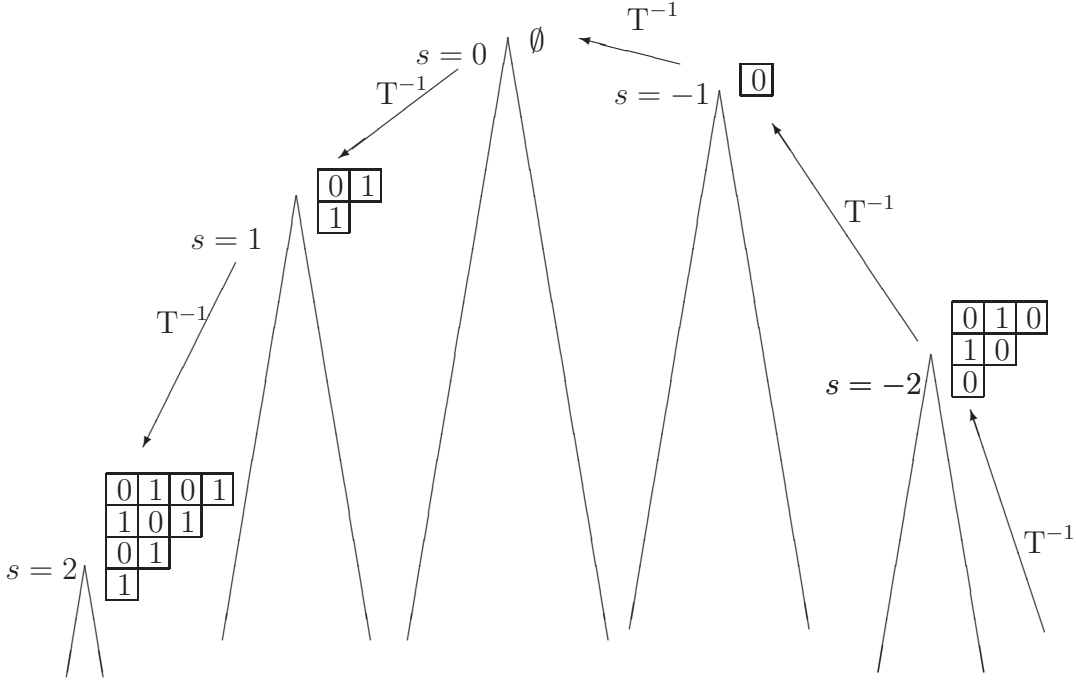
\begin{figure}\label{fock pic}
\begin{picture}(150,300)(20,00)\label{fig}

\put(70,280){\line(1,-6){38}}
\put(70,280){\line(-1,-6){38}}
\put (78,275){$\emptyset$}
\put (35,270){$s=0$}
\put(51,268){\vector(-4,-3){45}}
\put(20,255){$\on{T}^{-1}$}

\put(150,260){\line(1,-6){34}}
\put(150,260){\line(-1,-6){34}}
\put (110,255){$s=-1$}
\put(135,270){\vector(-4,1){38}}
\put(115,283){$\on{T}^{-1}$}
\put(158,270){\line(0,-1){12}}
\put(170,270){\line(0,-1){12}}
\put(158,270){\line(1,0){12}}
\put(158,258){\line(1,0){12}}
\put(162,260){$0$}

\put(-10,220){\line(1,-6){28}}
\put(-10,220){\line(-1,-6){28}}
\put(-50,200){$s=1$}
\put(-33,195){\vector(-1,-2){35}}
\put(-63,168){$\on{T}^{-1}$}
\put(-2,230){\line(0,-1){24}}
\put(10,230){\line(0,-1){24}}
\put(22,230){\line(0,-1){12}}
\put(-2,230){\line(1,0){24}}
\put(-2,218){\line(1,0){24}}
\put(-2,206){\line(1,0){12}}
\put(2,220){$0$}
\put(14,220){$1$}
\put(2,208){$1$}

\put(230,160){\line(1,-6){20}}
\put(230,160){\line(-1,-6){20}}
\put (190,145){$s=-2$}
\put(225,165){\vector(-2,3){55}}
\put(197,210){$\on{T}^{-1}$}

\put(238,180){\line(0,-1){36}}
\put(250,180){\line(0,-1){36}}
\put(262,180){\line(0,-1){24}}
\put(274,180){\line(0,-1){12}}
\put(238,180){\line(1,0){36}}
\put(238,168){\line(1,0){36}}
\put(238,156){\line(1,0){24}}
\put(238,144){\line(1,0){12}}
\put(242,170){$0$}
\put(254,170){$1$}
\put(266,170){$0$}
\put(242,158){$1$}
\put(242,146){$0$}
\put(254,158){$0$}

\put(-90,80){\line(1,-6){7}}
\put(-90,80){\line(-1,-6){7}}
\put (-120,75){$s=2$}
\put(-82,115){\line(0,-1){48}}
\put(-70,115){\line(0,-1){48}}
\put(-58,115){\line(0,-1){36}}
\put(-46,115){\line(0,-1){24}}
\put(-34,115){\line(0,-1){12}}
\put(-82,115){\line(1,0){48}}
\put(-82,103){\line(1,0){48}}
\put(-82,91){\line(1,0){36}}
\put(-82,79){\line(1,0){24}}
\put(-82,67){\line(1,0){12}}
\put(-78,105){$0$}
\put(-66,105){$1$}
\put(-54,105){$0$}
\put(-42,105){$1$}
\put(-78,93){$1$}
\put(-66,93){$0$}
\put(-54,93){$1$}
\put(-78,81){$0$}
\put(-66,81){$1$}
\put(-78,69){$1$}

\put (190,145){$s=-2$}
\put(273,55){\vector(-1,3){28}}
\put(265,85){$\on{T}^{-1}$}
\end{picture}
\caption{The $\mathcal E_2$ module $\mathcal F^{(0)}(u)$}
\end{figure}
On this picture the module $\mathcal F^{(0)}(u)$ looks \mdf{similar to that of} the vacuum
$\widehat{\mathfrak{sl}}_2$ integrable module of \mdf{(additive)}
level one, \mdf{but actually it is not the same.
It is similar} simply because the Fock module restricted to the horizontal algebra
$U_q^{hor}\widehat{\mathfrak{sl}}_2$ is a level \mdf{$q$  module (in the sense  $\kappa^{-1}=q$)}. However, the reader should be warned that our space is in fact the vacuum
$U_q\widehat{\mathfrak{gl}}_2$ module. In other words, we have a Heisenberg current commuting
with the $U_q^{hor}\widehat{\mathfrak{sl}}_2$, see Section \ref{embedding}, 
and our module is the tensor product of the Fock space of the Heisenberg algebra with the vacuum
$U_q^{hor}\widehat{\mathfrak{sl}}_2$ integrable module of level \mdf{$q$}. Thus the module
$\mathcal F^{(0)}(u)$ is the vacuum $U_q^{hor}\widehat{\mathfrak{gl}}_2$ module.

We have the usual $\mathfrak{sl}_2$ weight decomposition given by values of $K_1K_0^{-1}$.
\mdf{We called this weight ``cweight" (see \eqref{weight})}.

The \mdf{cweight} of a partition is given by
$\sharp\{\on{boxes\ of\ color\ 1}\}-\sharp\{\on{boxes\ of\ color\ 0}\}$
in the corresponding colored Young diagram. On Figure 1, the \mdf{cweight} increases from the right to
the left and it is denoted by $s$. The cones which look downward picture vectors of
the same \mdf{cweight}.

We also have the principal degree given by $\on{pdeg} E_i(z)=1$, $\on{pdeg} F_i(z)=-1$. It counts the
total number of boxes and in Figure 1 the principal degree increases from the top to the bottom.

The action of the $i$-th generator of the Heisenberg current increases the \mdf{principal}
degree by $2i$ and does not change the \mdf{cweight}.

\mdf{
We have the action of the shift element $T$, see \Ref{shift element},
on the Fock space as shown in Figure 1. Precisely,
we have  $T^{-1}=s_1s_0$ where the $s_i$ are the Lusztig simple reflections.}

\medskip

Our first observation is the following combinatorial "tensor product" decomposition of the sector with large \mdf{cweight} $s$. Let $\La^s=(2s,2s-1,\dots,2,1)$ and $\La^{-s}=(2s-1,2s-2,\dots,2,1)$
for $s>0$ and let $\La^0$ be the empty partition. Then
$\ket{\La^s}$ is the vector of the lowest degree of \mdf{cweight} $s$.
The degree of $\ket{\La^s}$ is $s(2s+1)$. Fix two partitions $\la$ and $\mu$ with, say,
$k$ parts  and let \mdf{$|s|$} be larger than $k$. Let $\La_{\la,\mu}^s$ be the unique partition
of degree $s(2s+1)+2|\la|+2|\mu|$ and \mdf{cweight} $s$ such that for $i=1,\dots,k$ we have
\be
(\La_{\la,\mu}^s)_i=\La^s_i+2\la_i, \\
(\La_{\la,\mu}^s)_i'=(\La^s)_i'+2\mu_i.
\en
Informally speaking, $\La_{\la,\mu}^s$ is obtained from $\La^s$ by attaching the partitions of $\la$ and $\mu$ made out of dominoes to the top and the bottom respectively, see Figure 2.

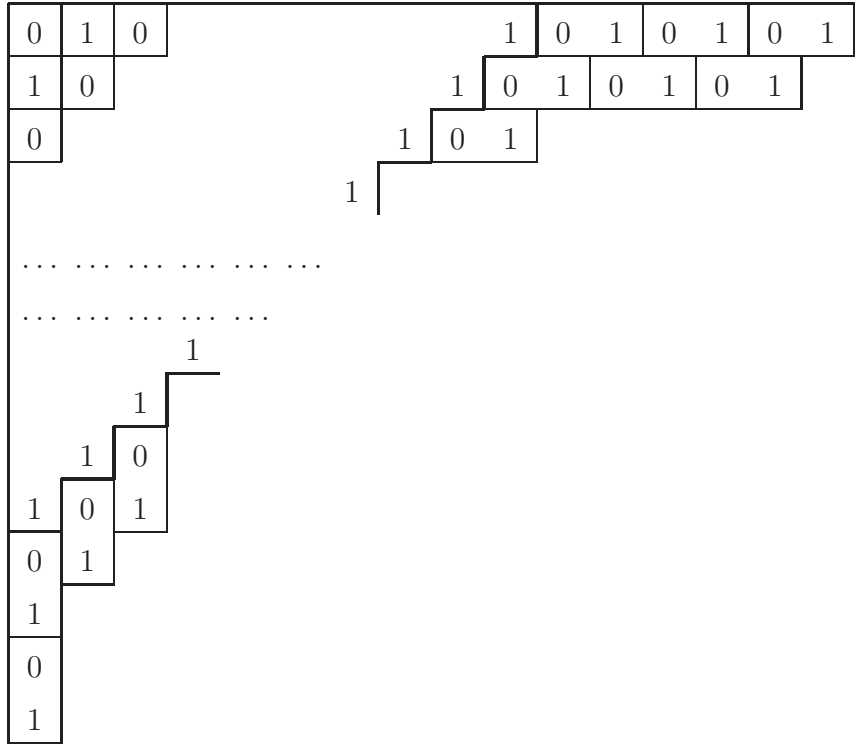
\begin{figure}
\begin{picture}(150,300)(150,00)

\put(270,280){\line(1,0){120}}
\put(270,260){\line(1,0){120}}
\put(250,240){\line(1,0){120}}
\put(230,220){\line(1,0){40}}

\put(270,240){\line(0,-1){20}}

\put(290,260){\line(0,-1){20}}
\put(310,280){\line(0,-1){20}}
\put(330,260){\line(0,-1){20}}
\put(350,280){\line(0,-1){20}}
\put(370,260){\line(0,-1){20}}
\put(390,280){\line(0,-1){20}}

\put(277,265){$0$}
\put(297,265){$1$}
\put(317,265){$0$}
\put(337,265){$1$}
\put(357,265){$0$}
\put(377,265){$1$}

\put(257,245){$0$}
\put(277,245){$1$}
\put(297,245){$0$}
\put(317,245){$1$}
\put(337,245){$0$}
\put(357,245){$1$}

\put(237,225){$0$}
\put(257,225){$1$}

\put(257,265){$1$}
\put(237,245){$1$}
\put(217,225){$1$}
\put(197,205){$1$}

\put(70,80){\line(0,-1){80}}
\put(90,80){\line(0,-1){80}}
\put(110,100){\line(0,-1){40}}
\put(130,120){\line(0,-1){40}}
\put(70,0){\line(1,0){20}}
\put(70,40){\line(1,0){20}}
\put(90,60){\line(1,0){20}}
\put(110,80){\line(1,0){20}}
\put(77,5){$1$}
\put(77,25){$0$}
\put(77,45){$1$}
\put(77,65){$0$}
\put(77,85){$1$}
\put(97,65){$1$}
\put(97,85){$0$}
\put(97,105){$1$}
\put(117,85){$1$}
\put(117,105){$0$}
\put(117,125){$1$}
\put(137,145){$1$}

\put(70,260){\line(1,0){60}}
\put(70,240){\line(1,0){40}}
\put(70,220){\line(1,0){20}}

\put(77,265){$0$}
\put(117,265){$0$}
\put(77,245){$1$}
\put(97,265){$1$}
\put(97,245){$0$}
\put(77,225){$0$}

\put(90,280){\line(0,-1){60}}
\put(110,280){\line(0,-1){40}}
\put(130,280){\line(0,-1){20}}
\thicklines

\put(70,280){\line(1,0){200}}
\put(70,280){\line(0,-1){200}}
\put(270,280){\line(0,-1){20}}
\put(270,260){\line(-1,0){20}}
\put(250,260){\line(0,-1){20}}
\put(250,240){\line(-1,0){20}}
\put(230,240){\line(0,-1){20}}
\put(230,220){\line(-1,0){20}}
\put(210,220){\line(0,-1){20}}
\multiput(75,180)(20,0){6}{$\dots$}
\multiput(75,160)(20,0){5}{$\dots$}

\put(70,80){\line(1,0){20}}
\put(90,80){\line(0,1){20}}

\put(90,100){\line(1,0){20}}
\put(110,100){\line(0,1){20}}
\put(110,120){\line(1,0){20}}
\put(130,120){\line(0,1){20}}
\put(130,140){\line(1,0){20}}
\end{picture}
\caption{The partition $\La_{\la,\mu}^s$ with $\la=(3,3,1)$ and $\mu=(2,1,1)$.}
\end{figure}
Denote by $S^s$ the subspace of $\mathcal F^{(0)}(u)$ of \mdf{cweight} $s$.
Denote by $S^s_{\leq 2k}$ the subspace of $S^s$ consisting of vectors which have degree
at most $s(2s+1)+2k$.
We have the following purely combinatorial lemma.
\begin{lem}\label{la mu}
If $2s>k$, the vectors $\ket{\La^s_{\mdf{\la,\mu}}}$ with $|\mu|+|\la|\leq k$ form a basis of $S^s_{\leq 2k}$.
\end{lem}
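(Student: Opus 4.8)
The plan is to recast the statement as a purely combinatorial bijection via the classical theory of $2$-cores and $2$-quotients. The first observation is that for $n=2$ the box $(x,y)$ has color $x-y\bmod 2$, which is its content $y-x$ modulo $2$; so the color is constant along each diagonal, and every domino (rim hook of size $2$) contains one box of each color. Hence $\mathrm{cweight}$ is invariant under adding or removing dominoes, so $\mathrm{cweight}(\nu)$ depends only on the $2$-core of $\nu$. The $2$-cores are precisely the staircases $\delta_r=(r,r-1,\dots,1)$, and a short count of row lengths modulo $2$ gives $\mathrm{cweight}(\delta_{2s})=s$; thus $\mathrm{cweight}(\nu)=s$ if and only if the $2$-core of $\nu$ equals $\La^s=\delta_{2s}$. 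By the $2$-core/$2$-quotient bijection, the set $\{\nu\mid\mathrm{cweight}(\nu)=s\}$ is in bijection with pairs of partitions $(\al,\beta)$, with $|\nu|=|\La^s|+2(|\al|+|\beta|)=s(2s+1)+2(|\al|+|\beta|)$; equivalently, $\sum_\nu q^{|\nu|}z^{\mathrm{cweight}(\nu)}=\big(\sum_{t\in\Z}q^{t(2t+1)}z^t\big)\prod_{i\ge1}(1-q^{2i})^{-2}$. Since $S^s_{\le 2k}$ is spanned by the $\ket\nu$ with $\mathrm{cweight}(\nu)=s$ and $|\nu|\le s(2s+1)+2k$, this shows $\dim S^s_{\le 2k}=\sum_{m=0}^{k}\#\{(\al,\beta)\mid|\al|+|\beta|=m\}$, and it remains to see that $(\la,\mu)\mapsto\La^s_{\la,\mu}$ is a bijection from $\{(\la,\mu)\mid|\la|+|\mu|\le k\}$ onto the set of these $\nu$.

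I would establish this directly from the construction of Figure~2. Attaching $\la$ to the top means replacing each box $(i,j)$ of $\la$ by the horizontal domino occupying columns $\La^s_i+2j-1,\La^s_i+2j$ of row $i$, and attaching $\mu$ to the bottom means, since $\La^s$ is self-conjugate, replacing each box $(i,j)$ of $\mu$ by the vertical domino occupying rows $\La^s_j+2i-1,\La^s_j+2i$ of column $j$. The key point, which is where the hypothesis relating $k$ and $s$ is used, is that the rows $1,\dots,\ell(\la)$ modified by the first operation and the columns $1,\dots,\ell(\mu)$ modified by the second lie in disjoint regions of the diagram and do not meet the first $k$ columns, resp.\ rows, of $\La^s$: a box lying in both arms would satisfy $i+j>2s+1$, whereas $i\le\ell(\la)$, $j\le\ell(\mu)$ and $\ell(\la)+\ell(\mu)\le|\la|+|\mu|\le k$. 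Granting this, $\La^s_{\la,\mu}$ is a genuine Young diagram; every added domino contributes two boxes and none is added twice, so $|\La^s_{\la,\mu}|=s(2s+1)+2(|\la|+|\mu|)$; every added domino contributes one box of each color, so $\mathrm{cweight}(\La^s_{\la,\mu})=\mathrm{cweight}(\La^s)=s$; the equalities $(\La^s_{\la,\mu})_i=\La^s_i+2\la_i$ and $(\La^s_{\la,\mu})'_i=(\La^s)'_i+2\mu_i$ for $i\le k$ hold by construction, and $\la,\mu$ are read off by $\la_i=(\nu_i-\La^s_i)/2$, $\mu_j=(\nu'_j-\La^s_j)/2$, so the map is injective. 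Computing the $2$-quotient of $\La^s_{\la,\mu}$ from the beta-numbers of $\La^s$ shows it equals $(\la,\mu)$, the two arms prescribing the two components independently; this reconfirms the above and also shows that $\La^s_{\la,\mu}$ is indeed the \emph{unique} partition with the properties used to define it.

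To conclude: for each $m\le k$ the map restricts to an injection of $\{(\la,\mu)\mid|\la|+|\mu|=m\}$ into the finite set $\{\nu\mid\mathrm{cweight}(\nu)=s,\ |\nu|=s(2s+1)+2m\}$, and by the first paragraph the two sets have the same cardinality, so this restriction is a bijection; summing over $m=0,\dots,k$ we obtain the desired bijection, so the vectors $\ket{\La^s_{\la,\mu}}$ with $|\la|+|\mu|\le k$ are exactly the standard basis vectors of $S^s_{\le 2k}$, hence a basis. The main obstacle I anticipate is the middle step: arranging the two ``domino arms'' and checking that the smallness of $k$ relative to $s$ keeps them from overlapping each other or disturbing the first $k$ rows and columns of the staircase --- once this bookkeeping is done, the counting argument and the $2$-quotient identification are routine.
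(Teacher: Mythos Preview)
Your proof is correct and takes a genuinely different route from the paper's.
The paper argues directly: for a partition $\nu$ with $\mathrm{cweight}(\nu)=s$, a row-by-row colour count (odd rows contribute $\le 0$, even rows contribute $\le 1$) forces $\nu\supseteq\La^s$; the at most $2k$ excess boxes then come in equal numbers of each colour and, by inspection, can only be arranged as the horizontal $\la$-arm and vertical $\mu$-leg of Figure~2.

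You instead invoke the classical $2$-core/$2$-quotient bijection: the cweight depends only on the $2$-core, the $2$-cores are the staircases, and the staircase with cweight $s$ is $\La^s=\delta_{2s}$; hence $\{\nu:\mathrm{cweight}(\nu)=s,\ |\nu|\le s(2s+1)+2k\}$ is in bijection with $\{(\alpha,\beta):|\alpha|+|\beta|\le k\}$. You then check that, under the hypothesis $2s>k$, the explicit arm/leg construction $(\la,\mu)\mapsto\La^s_{\la,\mu}$ is well defined (the two arms occupy disjoint row- and column-ranges because $\ell(\la)+\ell(\mu)\le k<2s$), injective, and lands in the right set, so is a bijection by counting.

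What each approach buys: the paper's argument is entirely self-contained and uses nothing beyond the colouring, but leaves the final ``only way to do it'' step as an exercise.  Your approach imports standard machinery but makes the structure transparent --- in particular, your remark that the $2$-quotient of $\La^s_{\la,\mu}$ is $(\la,\mu)$ itself (up to ordering conventions) identifies the paper's ad hoc construction with a canonical combinatorial object, and explains without further work why the resulting basis is labelled by \emph{pairs} of partitions, which is exactly the $\mathcal{E}_1\otimes\mathcal{E}_1$ picture the section is aiming for.  One small point: you state but do not carry out the beta-number computation of the $2$-quotient; since the counting argument already closes the proof, this is optional, but it would be worth a line to say so.
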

\begin{proof} 
Partitions of vectors in $S^s_{\leq 2k}$ have $s$ more boxes of color $1$ than color $0$. Each odd row contains at least as many boxes of color $0$ as color $1$. Each even row contains at most one more box of color $1$ than color $0$. It follows that every such partition contains $\La^s$. Hence any such partition is obtained from $\La^s$  by adding $r$ boxes of color $1$ and $r$ boxes of color $0$ where $r\leq k$. It is easy to see that the only way to do it is as described in the lemma.
\end{proof}

Another important statement is the following lemma. 

\begin{lem}\label{stab lem}
Let $\la,\mu$ be partitions and let \mdf{$2s> |\la|+|\mu|$}. Then $T^{-1} \ket{\La^s_{\mdf{\la,\mu}}}=a_{s,\la,\mu}\ket{\La^{s+1}_{\mdf{\la,\mu}}}$, where $a_{s,\la,\mu}$ is a non-zero constant. 
\end{lem}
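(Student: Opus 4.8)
## Proof proposal for Lemma~\ref{stab lem}

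The plan is to combine the combinatorial description from Lemma~\ref{la mu} with the known action of the Lusztig reflections on weight spaces, and then pin down the coefficient $a_{s,\la,\mu}$ via a degree/weight argument together with nonvanishing of matrix coefficients of $s_1,s_0$ on the integrable $U_q^{hor}(\widehat{\mathfrak{sl}}_2)$ module. First I would recall that $T^{-1}=s_1 s_0$ (noted already, via Lemma~\ref{Weyl-translation} for $n=2$), so $T^{-1}$ is the composition of two Lusztig braid group operators acting on $\mathcal F^{(0)}(u)$ viewed as an integrable $U_q^{hor}(\widehat{\mathfrak{sl}}_2)$-module. Since $T$ restricted to the Heisenberg subalgebra $\mathfrak a^{hor}$ is the identity, $T^{-1}$ commutes with the Heisenberg current; hence it suffices to understand $T^{-1}$ on the $\widehat{\mathfrak{sl}}_2$-primary vectors and propagate by the Heisenberg action. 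This reduces matters to a statement purely about the integrable $\widehat{\mathfrak{sl}}_2$ vacuum module of level $1$.

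Next I would track degrees and cweights. The Lusztig reflection $s_i$ sends a weight vector of cweight $\nu$ to one of cweight $r_i(\nu)$ where $r_i$ is the $i$-th simple reflection of the affine Weyl group; composing $s_1 s_0$ realizes the affine translation by the positive root direction, so it shifts the (finite part of the) cweight $s\mapsto s+1$, and correspondingly shifts the principal degree by the predictable amount. Concretely, $\on{pdeg}\ket{\La^{s}_{\la,\mu}}=s(2s+1)+2|\la|+2|\mu|$, and $\on{pdeg}\ket{\La^{s+1}_{\la,\mu}}=(s+1)(2s+3)+2|\la|+2|\mu|$; one checks that $T_{n-1|0}$-type automorphisms (equivalently $s_1s_0$) shift principal degree by exactly the difference $(s+1)(2s+3)-s(2s+1)=4s+3$, independent of $\la,\mu$. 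Therefore $T^{-1}\ket{\La^s_{\la,\mu}}$ lies in $S^{s+1}_{\le 2(|\la|+|\mu|)}$, and by Lemma~\ref{la mu} (applicable since $2(s+1)>|\la|+|\mu|$) this space has the basis $\{\ket{\La^{s+1}_{\la',\mu'}}: |\la'|+|\mu'|\le |\la|+|\mu|\}$. The lowest-degree vector among these of cweight $s+1$ and principal degree exactly $(s+1)(2s+3)+2|\la|+2|\mu|$ is the single vector $\ket{\La^{s+1}_{\la,\mu}}$, because the degree forces $|\la'|+|\mu'|=|\la|+|\mu|$, and among all $(\la',\mu')$ with that fixed total size the diagram $\La^{s+1}_{\la',\mu'}$ has the prescribed degree only for... here I must be slightly more careful: degree alone only fixes $|\la'|+|\mu'|$, not $(\la',\mu')$. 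So the cleaner route is to argue that $T^{-1}$ restricted to $S^s_{\le 2k}$ is injective (being invertible on the whole module) and maps into $S^{s+1}_{\le 2k}$, and that on the one-dimensional bottom graded piece ($\la=\mu=\emptyset$) it multiplies $\ket{\La^s}$ by a nonzero scalar; then one induces on $|\la|+|\mu|$ using the Heisenberg action, which intertwines $T^{-1}$ and is homogeneous in principal degree.

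The cleanest implementation: argue that $\ket{\La^s_{\la,\mu}}$ can be obtained from $\ket{\La^s}$ by applying a fixed element $P_{\la,\mu}$ of the Heisenberg subalgebra $\mathfrak a^{hor}$ (this is essentially the boson--fermion type statement for the $U_q\widehat{\mathfrak{gl}}_2$ vacuum module, and the two commuting partition-labels $\la,\mu$ correspond to the two ``edges'' of the diagram attached via dominoes, i.e.\ to two independent sets of Heisenberg modes), possibly after composing with the fact that the joint spectrum of the Cartan is simple so each cweight-$s$, degree-$(s(2s+1)+2|\la|+2|\mu|)$ vector is determined up to scalar. Then since $T^{-1}$ commutes with $\mathfrak a^{hor}$, we get $T^{-1}\ket{\La^s_{\la,\mu}}=P_{\la,\mu}T^{-1}\ket{\La^s}=a_{s}\,P_{\la,\mu}\ket{\La^{s+1}}=a_{s}c_{s+1,\la,\mu}\ket{\La^{s+1}_{\la,\mu}}$ with all constants nonzero, giving the claim with $a_{s,\la,\mu}=a_s c_{s+1,\la,\mu}/c_{s,\la,\mu}$ once one also writes $\ket{\La^s_{\la,\mu}}=c_{s,\la,\mu}P_{\la,\mu}\ket{\La^s}$. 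The base case $T^{-1}\ket{\La^s}=a_s\ket{\La^{s+1}}$ with $a_s\neq 0$ is the statement that the Lusztig operators $s_1s_0$ carry the extremal vector of cweight $s$ to a nonzero multiple of the extremal vector of cweight $s+1$ in an integrable highest/lowest weight module --- this is standard (extremal vectors generate one-dimensional weight spaces and braid operators permute them bijectively).

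The main obstacle, I expect, is the identification $\ket{\La^s_{\la,\mu}}=c_{s,\la,\mu}P_{\la,\mu}\ket{\La^s}$ with $c_{s,\la,\mu}\neq 0$: i.e.\ showing that the ``domino'' vectors really are obtained from the extremal vector $\ket{\La^s}$ by a single Heisenberg element that is \emph{independent of $s$} (for $s$ large). This requires knowing that in the stable range the $U_q\widehat{\mathfrak{gl}}_2$ vacuum module looks, near cweight $s$, like a free module over the Heisenberg algebra with the extremal vector as cyclic vector, and that the two partition labels decouple into two commuting families of bosonic modes. One can sidestep part of this by instead proving injectivity and degree-preservation of $T^{-1}$ on $S^s_{\le 2k}$ abstractly (from invertibility of $T$ and Lemma~\ref{perp deg}/\eqref{shift element}) and then checking the scalar is nonzero on each basis vector by a direct computation of one matrix coefficient of $s_1s_0$ in the Fock realization of Section~\ref{rep sec}; this avoids the full boson--fermion correspondence at the cost of a short explicit calculation, which is the route I would actually take if the clean structural argument proves hard to make rigorous.
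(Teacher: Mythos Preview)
Your proposal is considerably more complicated than what the paper actually does, and the main route you outline has a genuine gap. The paper's proof is a two-line combinatorial observation: for $s\ge 0$ the partition $\La^s_{\la,\mu}$ has \emph{no removable box of color $0$}, so $F_{0,0}\ket{\La^s_{\la,\mu}}=0$ and $\ket{\La^s_{\la,\mu}}$ is a lowest weight vector for the finite $U_q\mathfrak{sl}_2$ generated by $E_{0,0},F_{0,0}$. Hence $s_0\ket{\La^s_{\la,\mu}}$ is a nonzero multiple of the highest weight vector of that $\mathfrak{sl}_2$-string, which combinatorially is $\ket{\La^{-s-1}_{\la,\mu}}$ (one simply fills in all addable $0$-boxes). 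Then one repeats with $s_1$. The point you missed is that the ``extremal'' argument you invoke only for the base case $\ket{\La^s}$ in fact applies directly to \emph{every} $\ket{\La^s_{\la,\mu}}$; there is no need for any induction, Heisenberg intertwining, or matrix-coefficient computation.

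The Heisenberg route you propose does not work as stated: $\mathfrak a^{hor}$ is a \emph{single} Heisenberg current, so the space $P_{\la,\mu}\ket{\La^s}$ with $P_{\la,\mu}\in U(\mathfrak a^{hor})$ has graded dimension given by a single partition count, not a pair $(\la,\mu)$. One of your two ``commuting families of bosonic modes'' would have to come from inside $U_q^{hor}\widehat{\mathfrak{sl}}_2$, and there is no reason such operators commute with $T^{-1}=s_1s_0$. Your fallback suggestion (injectivity plus a single nonzero matrix coefficient) also does not close the gap, since it does not explain why $T^{-1}\ket{\La^s_{\la,\mu}}$ lands on $\ket{\La^{s+1}_{\la,\mu}}$ rather than on some nontrivial linear combination of all $\ket{\La^{s+1}_{\la',\mu'}}$ with $|\la'|+|\mu'|=|\la|+|\mu|$; you yourself noticed that degree and cweight alone only fix the sum $|\la'|+|\mu'|$. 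The combinatorial fact that each $\ket{\La^s_{\la,\mu}}$ spans a one-dimensional extremal weight space for the finite $\mathfrak{sl}_2$'s is exactly what resolves this, and it is the whole content of the paper's proof.
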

\begin{proof} 
For $s\geq 0$, one cannot remove boxes of color $0$ from 
$\ket{\La^s_{\mdf{\la,\mu}}}$. It is therefore 
a lowest weight vector with respect to $U_q\mathfrak{sl}_2$
generated by $E_{0,0},F_{0,0}$. Then 
$s_0\ket{\La^s_{\mdf{\la,\mu}}}$ is a non-zero constant multiple of the corresponding highest weight vector, that is 
$\ket{\La^{-s-1}_{\mdf{\la,\mu}}}$. Similarly 
$s_1\ket{\La^{\mdf{-s-1}}_{\mdf{\la,\mu}}}$ is a non-zero constant multiple of
$\ket{\La^{\mdf{s+1}}_{\mdf{\la,\mu}}}$. 
The lemma follows.
\end{proof}
{
For $s\in\Z$, and partitions $\la,\mu$, we choose any integer $r=r(s,\la,\mu)$ such that $2(r+s)> |\la|+|\mu|$ and 
define vectors $v^s_{\la,\mu}$ by the formula
\be
v^s_{\la,\mu}= T^{r} \ket{\La^{r+s}_{\la,\mu}}.
\en
Note that different choices of $r$ change vectors $v^s_{\la,\mu}$ by non-zero scalars.

\begin{cor}\label{stable basis}
The vectors $v^s_{\la,\mu}$ form a basis of the space $S^s$.
\end{cor}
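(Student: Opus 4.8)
The plan is to reduce the Corollary to the two combinatorial lemmas above by using the operator $T$ to transport the whole cweight sector $S^{s}$ to a sector $S^{t}$ with $t$ large and positive, where Lemma~\ref{la mu} describes the low‑degree part completely. Throughout, $T$ denotes the invertible operator by which the braid group element $(s_{n-1}\cdots s_1s_0)^{n-1}=s_1s_0$ (see Lemma~\ref{Weyl-translation} and the caption of Figure~1) acts on the integrable $U^{hor}_q(\widehat{\mathfrak{gl}}_2)$‑module $\mathcal F^{(0)}(u)$; it is a bijection, as is $T^{-1}=s_1s_0$.

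First I would record three preliminary facts. (i) By iterating Lemma~\ref{stab lem}, for any partitions $\la,\mu$ and any integer $r$ with $2(r+s)>|\la|+|\mu|$ (which already forces $r+s\ge 1$) the vector $T^{r}\ket{\La^{r+s}_{\la,\mu}}$ is a nonzero scalar multiple of $v^{s}_{\la,\mu}$; this is the remark made just after the statement. (ii) Since $T^{-1}$ permutes the $K_1K_0^{-1}$‑eigenspaces of $\mathcal F^{(0)}(u)$ and, by Lemma~\ref{stab lem} with $\la=\mu=\emptyset$, sends a nonzero vector of $S^{t}$ into $S^{t+1}$, invertibility gives $T^{-1}\colon S^{t}\isoto S^{t+1}$ for every $t$, hence $T^{r}\colon S^{r+s}\isoto S^{s}$. (iii) Because $T^{-1}=s_1s_0$ acts on the integrable module $\mathcal F^{(0)}(u)$ as a lift of an affine Weyl translation, it shifts the principal degree inside a fixed cweight sector $S^{t}$ rigidly, i.e.\ by a constant depending only on $t$; matching this against $T^{-1}\ket{\La^{t}}=a_{t,\emptyset,\emptyset}\ket{\La^{t+1}}$ and using that $\deg\ket{\La^{t}}$ is the minimal degree in $S^{t}$ for $t\ge1$ (Lemma~\ref{la mu} with $k=0$), the shift equals $\deg\ket{\La^{t+1}}-\deg\ket{\La^{t}}$. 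Consequently $T^{-1}$ carries the filtration piece $S^{t}_{\le 2\ell}$ into $S^{t+1}_{\le 2\ell}$ for all $t\ge1$, $\ell\ge0$.

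For linear independence: given a finite relation $\sum_{(\la,\mu)\in F}c_{\la,\mu}v^{s}_{\la,\mu}=0$, pick $r$ with $2(r+s)>\max_{F}(|\la|+|\mu|)$; by (i) and the invertibility of $T$, applying $T^{-r}$ turns this into $\sum_{(\la,\mu)\in F}c'_{\la,\mu}\ket{\La^{r+s}_{\la,\mu}}=0$ with each $c'_{\la,\mu}$ a nonzero multiple of $c_{\la,\mu}$, and Lemma~\ref{la mu} forces all $c'_{\la,\mu}=0$. For spanning it suffices to take $w\in S^{s}$ homogeneous of principal degree $\delta$. Fix $r_0$ with $r_0+s\ge1$ and $\ell$ with $T^{-r_0}w\in S^{r_0+s}_{\le 2\ell}$ (possible, since $T^{-r_0}w$ has bounded degree and $S^{r_0+s}$ has minimal degree $\deg\ket{\La^{r_0+s}}$); by (iii), $T^{-r}w\in S^{r+s}_{\le 2\ell}$ for all $r\ge r_0$. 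Choosing $r\ge r_0$ with also $2(r+s)>\ell$, Lemma~\ref{la mu} gives $T^{-r}w=\sum_{|\la|+|\mu|\le\ell}c_{\la,\mu}\ket{\La^{r+s}_{\la,\mu}}$, and applying $T^{r}$ together with (i) exhibits $w$ as a finite combination of the $v^{s}_{\la,\mu}$. This proves the Corollary.

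The main obstacle is step (iii): one must know that $T^{-1}$ moves the principal degree inside each sector $S^{t}$ by a single constant, so that the finite‑dimensional pieces $S^{t}_{\le 2\ell}$ are compatibly matched along the chain $S^{s}\to S^{r+s}$ as $r\to\infty$; without this the degree of $T^{-r}w$ could in principle outgrow the range $2(r+s)$ in which Lemma~\ref{la mu} applies. This is precisely the point where the identification of $T^{-1}$ with an affine Weyl translation on the integrable horizontal module is needed, and it is the only input beyond the two combinatorial lemmas.
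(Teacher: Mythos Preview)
Your argument is correct and is exactly what lies behind the paper's one-line proof, which simply says the corollary follows from Lemma~\ref{stab lem} and Lemma~\ref{la mu}. You have correctly isolated and justified the one point those two lemmas leave implicit, namely that the braid operator $T^{-1}=s_1s_0$ shifts the principal degree by a constant on each cweight sector (because the $K_i$-eigenvalues are fixed there), which is precisely what is needed to keep $T^{-r}w$ inside the range where Lemma~\ref{la mu} applies.
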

\begin{proof}
The corollary follows from Lemma \ref{stab lem} and Lemma \ref{la mu}.
\end{proof}

Recall that we have the subalgebra $\mathcal E_1^{(1)}\otimes \mathcal E_1^{(3)}\subset \mathcal E_2$.
The subalgebra $\mathcal E_1^{(1)}$ is generated by currents $E_{1|0}^{(1),\perp}(z)$, $F_{1|0}^{(1),\perp}(z)$, $K_{1|0}^{\pm(1),\perp}(z)$ and 
the subalgebra $\mathcal E_1^{(3)}$ is generated by currents $E_{1|0}^{(3),\perp}(z)$, $F_{1|0}^{(3),\perp}(z)$, $K_{1|0}^{\pm(3),\perp}(z)$. Now we are in a position to establish the decomposition of the $\mathcal E_2$-module $\mc F^{(0)}$. 

We write $\mc F^{(2;0)}(u)$ for the $\mathcal E_2$ Fock module $\mc F^{(0)}(u)$, we also write $\mc F^{(1)}(u)$ (resp.
$\mc F^{(3)}(u)$) for the $\mathcal E_1^{(1)}$ (resp. $\mathcal E_1^{(3)}$) Fock modules.

\begin{thm}\label{Fock 2 thm}
We have an isomorphism of $\mathcal E_1^{(1)}\otimes \mathcal E_1^{(3)}$ modules
\bea\label{Fock 2 decomp}
\mathcal F^{(2;0)}(u)=\mathop{\oplus}\limits_{s\in\Z}\ x^{s(2s+1)}z^s\ \mathcal F^{(1)}(-qq_1^{2s}u)\boxtimes \mathcal F^{(3)}(-qq_3^{2s}u).
\ena
In particular, this isomorphism identifies the space $\mc F^{(1)}(-qq_1^{2s}u)\boxtimes \mathcal F^{(3)}(-qq_3^{2s}u)$ with the subspace of $\mathcal F^{(2;0)}(u)$ of \mdf{cweight} $s$.

Here the factor $x^{s(2s+1)}z^s$ signifies the \mdf{cweight} and degree of the top vector of
the subspace $\mc F^{(1)}(-qq_1^{2s}u)\boxtimes \mathcal F^{(3)}(-qq_3^{2s}u)$ in $\mc F^{(2;0)}(u)$. 
\end{thm}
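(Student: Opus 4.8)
The plan is to cut $\mathcal{F}^{(2;0)}(u)$ into cweight eigenspaces and match them one by one. Write $\mathcal{F}^{(2;0)}(u)=\bigoplus_{s\in\Z}S^s$ for the decomposition by the cweight of \eqref{weight} (the eigenvalue of $K_1K_0^{-1}$). Every generating series of $\mathcal{E}_1^{(1)}\otimes\mathcal{E}_1^{(3)}$, namely $E_{1|0}^{(i),\perp}(z)$, $F_{1|0}^{(i),\perp}(z)$, $K_{1|0}^{\pm(i),\perp}(z)$ for $i=1,3$, is assembled (see Section \ref{fused current}) from one current of colour $1$ and one of colour $0$, which carry opposite $\mathfrak{sl}_2$-weights $\pm\alpha_1$; hence $\mathcal{E}_1^{(1)}\otimes\mathcal{E}_1^{(3)}$ preserves the cweight, so each $S^s$ is a submodule. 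By Theorem \ref{sub alg} and Theorem \ref{commute} the two tensor factors act on $S^s$ as commuting copies of $\mathcal{E}_1$ (with the parameters recorded after Theorem \ref{sub alg}). It thus suffices to produce, for each $s$, an isomorphism of $\mathcal{E}_1^{(1)}\otimes\mathcal{E}_1^{(3)}$-modules $S^s\cong\mathcal{F}^{(1)}(-qq_1^{2s}u)\boxtimes\mathcal{F}^{(3)}(-qq_3^{2s}u)$ under which the lowest vector $\ket{\Lambda^s}$ of $S^s$ (which is $v^s_{\emptyset,\emptyset}$ up to a scalar, of principal degree $s(2s+1)$) corresponds to the lowest weight vector; summing over $s$ and reading the prefactor $x^{s(2s+1)}z^s$ off as the degree and cweight of $\ket{\Lambda^s}$ then yields \eqref{Fock 2 decomp}.

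The first substantial step is to check that $\ket{\Lambda^s}$ is a lowest weight vector for $\mathcal{E}_1^{(1)}\otimes\mathcal{E}_1^{(3)}$ with exactly the lowest weight of $\mathcal{F}^{(1)}(-qq_1^{2s}u)\boxtimes\mathcal{F}^{(3)}(-qq_3^{2s}u)$. That it is annihilated by the fused lowering currents follows from the colour pattern on the staircase $\Lambda^s$: each fused $F$-current is a regularised composition of an $F^\perp$ of colour $1$ followed by one of colour $0$, and on $\Lambda^s$ either no colour-$0$ box is ever exposed once a colour-$1$ box has been removed (case $s\ge 0$), or no colour-$1$ box can be removed at all (case $s<0$); this is the stronger form of the observation already used in the proof of Lemma \ref{stab lem}. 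Its weight is then computed by applying the fused Cartan currents $K_{1|0}^{\pm(i),\perp}(z)$, which by the product formula (cf.\ \eqref{Ki}, \eqref{E21}, and the $EF$-relation in the proof of Theorem \ref{sub alg}) are products $K_1^{\pm,\perp}(q_1z)K_0^{\pm,\perp}(z)$ up to the appropriate $n=2$ corrections; evaluating these on $\ket{\Lambda^s}$ by the explicit Fock formulas of Section \ref{rep sec} (a product of $\psi$-factors over the concave and convex corners of the staircase) and translating back to the standard generators via $\theta$, with the normalisation of Lemma \ref{H1 lem}, produces precisely the lowest weight $\tfrac{q^{-1}-qv/z}{1-v/z}$ of $\mathcal{F}^{(1)}(v)$ with $v=-qq_1^{2s}u$ for the $\mathcal{E}_1^{(1)}$-factor and the lowest weight of $\mathcal{F}^{(3)}(-qq_3^{2s}u)$ for the $\mathcal{E}_1^{(3)}$-factor.

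With the lowest weight pinned down, $\mathcal{F}^{(1)}(-qq_1^{2s}u)$ and $\mathcal{F}^{(3)}(-qq_3^{2s}u)$ are irreducible quasi-finite lowest weight $\mathcal{E}_1$-modules, so their box product is the unique irreducible lowest weight $\mathcal{E}_1^{(1)}\otimes\mathcal{E}_1^{(3)}$-module of that weight. Consequently the submodule $U\cdot\ket{\Lambda^s}\subseteq S^s$ generated by $\ket{\Lambda^s}$ (where $U=\mathcal{E}_1^{(1)}\otimes\mathcal{E}_1^{(3)}$), being a lowest weight module of the same weight, surjects onto $\mathcal{F}^{(1)}(-qq_1^{2s}u)\boxtimes\mathcal{F}^{(3)}(-qq_3^{2s}u)$ compatibly with the principal grading. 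Now compare graded dimensions: Corollary \ref{stable basis} (together with Lemma \ref{stab lem} and Lemma \ref{la mu}, which make $v^s_{\la,\mu}$ well defined and a basis, of principal degree $s(2s+1)+2|\la|+2|\mu|$) gives $\dim S^s_N=\#\{(\la,\mu):s(2s+1)+2|\la|+2|\mu|=N\}$, and this equals the graded dimension of $\mathcal{F}^{(1)}(\cdot)\boxtimes\mathcal{F}^{(3)}(\cdot)$ when its natural basis $\ket{\la}\otimes\ket{\mu}$ is placed in degree $s(2s+1)+2|\la|+2|\mu|$ (the embedding of each $\mathcal{E}_1$-factor doubles principal degrees). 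Hence in every degree $\dim\bigl(\mathcal{F}^{(1)}\boxtimes\mathcal{F}^{(3)}\bigr)_N\le\dim\bigl(U\cdot\ket{\Lambda^s}\bigr)_N\le\dim S^s_N=\dim\bigl(\mathcal{F}^{(1)}\boxtimes\mathcal{F}^{(3)}\bigr)_N$, so all inequalities are equalities, $U\cdot\ket{\Lambda^s}=S^s$, and the surjection is an isomorphism.

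The main obstacle is the weight computation in the second step: verifying that the fused Cartan currents act on the staircase vector $\ket{\Lambda^s}$ with precisely the eigenvalues of the two claimed Fock lowest weights, with the correct evaluation parameters $-qq_1^{2s}u$ and $-qq_3^{2s}u$. This is where the combinatorics of the corners of $\Lambda^s$ (whose longest row and column have length $2s$, producing the $q_1^{2s}$, $q_3^{2s}$) must be reconciled with the passage between the $\perp$-generators and the standard ones (the automorphism $\theta$ and the $(-q)q_1^{-i/(n-1)}$-type normalisations of Lemma \ref{H1 lem}, which account for the scalar $-q$); this same computation, carried out for the two factors separately, is also what determines which of the partition labels $\la,\mu$ is moved by $\mathcal{E}_1^{(1)}$ and which by $\mathcal{E}_1^{(3)}$. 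Everything else — the cweight grading, the lowest weight property of $\ket{\Lambda^s}$, irreducibility of the box product, and the dimension count from Corollary \ref{stable basis} — is routine given the results already established.
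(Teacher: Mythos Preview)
Your overall skeleton—split by cweight, exhibit $\ket{\Lambda^s}$ as a lowest weight vector, pin down its lowest weight, and finish with the character comparison from Corollary~\ref{stable basis}—is exactly the paper's. The problem is in how you carry out the two middle steps: you conflate the perpendicular generators of $\mathcal{E}_2$ with the standard ones, and this is precisely the obstruction the paper singles out and works around.

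The subalgebras $\mathcal{E}_1^{(1)}$, $\mathcal{E}_1^{(3)}$ are built from the \emph{perpendicular} currents $E_i^\perp=\theta^{-1}(E_i)$, $F_i^\perp=\theta^{-1}(F_i)$, $K_i^{\pm,\perp}=\theta^{-1}(K_i^\pm)$, while the explicit Fock formulas of Section~\ref{rep sec} describe the \emph{standard} currents. It is the standard $F_i$ that removes a box of colour $i$; the perpendicular $F_i^\perp$ is a complicated element of $\mathcal{E}_2$ with no such combinatorial interpretation. So your colour-pattern argument does not show $F_{1|0}^{(i),\perp}\ket{\Lambda^s}=0$; and even if it did, that is not the relevant condition—the lowest weight property is $\tilde{F}_0(z)\ket{\Lambda^s}=0$ for the \emph{standard} $\tilde{F}_0$ of the subalgebra. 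Likewise, you cannot ``evaluate $K_{1|0}^{\pm(i),\perp}(z)$ on $\ket{\Lambda^s}$ by the explicit Fock formulas'': those formulas compute $K_i^\pm$, not $K_i^{\pm,\perp}$, and indeed $\ket{\Lambda^s}$ has no reason to be an eigenvector of the perpendicular Cartan at all (the $\tilde{H}^\perp_{0,k}$ do not preserve principal degree).

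The paper's fix is short but essential. For the lowest-weight-vector property it argues purely by degree: every standard $\tilde{F}_{0,k}$ of the subalgebra has principal degree $-2$ inside $\mathcal{E}_2$ (cf.\ Lemma~\ref{graded embedding}), and $\ket{\Lambda^s}$ spans the minimal-degree line of $S^s$, so it is killed. For the weight, Lemma~\ref{H1 lem} is not a normalisation but the whole mechanism: it expresses the standard $\tilde{H}_{0,1}$ of each $\mathcal{E}_1^{(i)}$ as $(-q)\lim_{s\to\infty}q_i^{-2s}T^s(q_i^{-1}H_{1,1}+H_{0,1})$, a limit of \emph{standard} $\mathcal{E}_2$-Cartan elements conjugated by the shift $T$. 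Since $T^{-s}$ carries $\ket{\Lambda^s}$ into the stable staircase regime (Lemma~\ref{stab lem}), the eigenvalue is then read off from the ordinary Fock formulas, giving $H_{0,1}^{(1)}\ket{\Lambda^s}=q^2uq_1^{2s}\ket{\Lambda^s}$ and $H_{0,1}^{(3)}\ket{\Lambda^s}=q^2uq_3^{2s}\ket{\Lambda^s}$, which identifies the evaluation parameters $-qq_1^{2s}u$, $-qq_3^{2s}u$. After that your dimension count finishes the proof exactly as you wrote.
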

\begin{proof}
The algebras $\mc E^{(1)}_1$ and $\mc E^{(3)}_1$ are defined in terms of the perpendicular generators of $\mc E_2$  and the action of $\mc E_2$ in $\mathcal F^{(2;0)}(u)$ is given in terms of usual generators. Therefore, in general, it is not easy to compute the action of algebras $\mc E^{(1)}_1$ and $\mc E^{(3)}_1$ in  $\mathcal F^{(2;0)}(u)$.
However, at least we have the following formula, see Lemma \ref{H1 lem}, which turns out to be sufficient for our purposes.
\bea
H_{0,1}^{(1)}=(-q)\lim_{s\to \infty} q_1^{-2s}T^s(q_1^{-1}H_{1,1}+H_{0,1}),\label{H011}\\
H_{0,1}^{(3)}=(-q)\lim_{s\to \infty} q_3^{-2s}T^s(q_3^{-1}H_{1,1}+H_{0,1}).\label{H012}
\ena
It follows that we can compute 
\bea\label{comp H01}
\bra{v}H_{0,1}^{(1)}\ket{v}=\mdf{-q}
\lim_{s\to \infty} \bra{\mdf{T^{-s}}v}q_1^{-2s}(q_1^{-1}H_{1,1}+H_{0,1})\ket{\mdf{T^{-s}}v},\\
\bra{v}H_{0,1}^{(3)}\ket{v}=\mdf{-q}
\lim_{s\to \infty} \bra{\mdf{T^{-s}}v}q_3^{-2s}(q_3^{-1}H_{1,1}+H_{0,1})\ket{\mdf{T^{-s}}v}.\nn
\ena
\newpage
We use these formulas to establish
\bea\label{hw}
H_{0,1}^{(1)}\ket{\La^s}=q^2uq_1^{2s}\ket{\La^s},\qquad
H_{0,1}^{(3)}\ket{\La^s}=q^2uq_3^{2s}\ket{\La^s}.
\ena
Now note, that for all $s\in \Z$ the vectors $\ket{\La^s}$ are lowest weight vectors with respect to 
$\mc E^{(1)}_1$ and $\mc E^{(3)}_1$ for the degree reasons. Since the levels of $\mc E^{(1)}_1$ and $\mc E^{(3)}_1$
coincide with the level of $\mc E_2$, the vector $\ket{\La^s}$ generates a level $q$ module for both of these algebras. Such a module necessarily contains a Fock module. The evaluation parameter of the Fock module is now obtained from \Ref{hw}. 

It follows that $\mc F^{(2;0)}(u)$ contains the right hand side of \Ref{Fock 2 decomp}. Then the equality follows from Lemma \ref{stable basis}.
\end{proof}
\begin{rem} {\rm
Using \Ref{comp H01}, we can compute the action of operators $H_{0,1}^{(1)}$ and $H_{0,1}^{(3)}$ on basis $v^s_{\la,\mu}$. Moreover, in fact, the spectrum of the operator $H_{0,1}$ is simple in the $\mc E_1$ Fock module. It allows us to identify (up to a constant) the vector $v^s_{\la,\mu}$ with the vector $\ket{\la}\boxtimes\ket{\mu}$.}
\end{rem}

\subsection{Fock modules for $\mathcal E_n$}\label{fock n branch sec}
In this section we generalize the results of Section \ref{fock branch sec} for all Fock modules.

\mdf{Fix $p\in\{0,\dots,n-1\}$, and
consider the $\mathcal E_n$ Fock module $\mathcal F^{(p)}(u)$.}
Then we have a picture, similar to Figure 1, where the lattice of roots is now $\Z_{n-1}$.
The top vectors (or extremal vectors) are obtained by the action of the braid group
on the $\ket{\emptyset}$.

Denote the simple roots of $\mathfrak{sl}_n$ 
by $\alpha_j$, $j=1,\cdots,n-1$.
Let $\eta,\ \eta^{(p)},\ \eta_i$ be the following $\mathfrak{sl}_n$ roots:
\be
\eta=\sum_{j=1}^{n-1}j\al_j,\qquad \eta^{(p)}=\sum_{j=1}^{p-1}j\al_j+p\sum_{j=p}^{n-1}\al_j,\qquad
\eta_i=\sum_{j=1}^{i}(i-j+1)\al_{n-j}.
\en
Here $i=0,\dots,n-2$.

Given an $\mathfrak{sl}_n$ root $\gamma$, there is unique $s,a_1,\dots,a_{n-2}\in\Z$, $i\in\{0,1,\dots,n-2\}$,
such that
\bea\label{all weights}
\gamma=\eta^{(p)}+\eta_i+s\eta+\sum_{j=1}^{n-2}a_j\al_j.
\ena

For $s\equiv i+p\ (\on{mod} \ n-1)$ denote $v^{s,i}$ the extremal vector of \mdf{cweight} 
\be
w(s,i,p):=\eta^{(p)}+\eta_i+\frac{s-i-p}{n-1}\eta.
\en
\begin{lem} For $ns\geq i+p$, we have $v^{s,i}=\ket{\La^{s,i}}$, where the partition
$\La^{s,i}$ has $ns-p$ non-trivial parts given by
\be
(\La^{s,i})_j=\frac{\mdf{ns-i-p}}{n-1}-\left[\frac{j-i-1}{n-1}\right],
\en
see Figure 3. 
\end{lem}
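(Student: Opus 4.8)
The strategy is to recognise $\ket{\La^{s,i}}$ as an extremal weight vector of the integrable horizontal $U^{hor}_q(\widehat{\mathfrak{gl}}_n)$-module $\mathcal F^{(p)}(u)$ and then to check that its classical weight equals $w(s,i,p)$; since an extremal weight occurs with multiplicity one, this forces $\ket{\La^{s,i}}$ to be a scalar multiple of $v^{s,i}$, as claimed.

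First I would unpack the combinatorics of $\La^{s,i}$. Since $s\equiv i+p\pmod{n-1}$, the number $M:=\frac{ns-i-p}{n-1}$ is a non-negative integer, and $[\frac{j-i-1}{n-1}]$ is constant on each block of $n-1$ consecutive rows; hence $\La^{s,i}$ consists of $i$ rows of length $M+1$ followed by $M$ blocks of $n-1$ equal rows, of lengths $M,M-1,\dots,1$. One checks directly that this is a genuine partition with exactly $ns-p$ positive parts (the last part equals $1$ and the $(ns-p+1)$-st vanishes). Next I would list the addable and removable corners of $\La^{s,i}$ and compute their colours modulo $n$, recalling that the box $(x,y)$ has colour $p+x-y$. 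The shift exponents telescope block by block, and the outcome is: every removable corner has colour $\equiv p+i-M-1$, while every addable corner has colour $\equiv p+i-M$, with the single exception --- present only when $i\ge 1$ --- of the corner extending the first row, whose colour is $\equiv p-M-1$. The constraint $0\le i\le n-2$ makes these residues pairwise distinct mod $n$.

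Extremality is then immediate. Since $E_i(z)$ adds and $F_i(z)$ removes a box of colour $i$, the horizontal generator $E_{j,0}$ (resp.\ $F_{j,0}$) annihilates $\ket\la$ exactly when $\la$ has no addable (resp.\ removable) corner of colour $j$. By the previous step, for every colour $j$ at least one of these holds: if $j$ is the removable colour then $E_{j,0}\ket{\La^{s,i}}=0$; if $j\in\{p+i-M,\,p-M-1\}$ then $F_{j,0}\ket{\La^{s,i}}=0$; for every other $j$ both vanish. Thus $\ket{\La^{s,i}}$ is a highest- or lowest-weight vector for every $\widehat{\mathfrak{sl}}_n$-triple $\mathfrak{sl}_2^{(j)}$ --- equivalently, $\La^{s,i}$ is an $n$-core, so $\ket{\La^{s,i}}$ sits in the Heisenberg-vacuum line and is therefore an extremal weight vector of $\mathcal F^{(p)}(u)$. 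Finally I would compute $\mathrm{cweight}(\La^{s,i})=\sum_{j=1}^{n-1}(d_j-d_0)\alpha_j$, with $d_j$ the number of boxes of colour $j$, by a block-by-block count (the $i$ rows of length $M+1$, and the blocks of lengths $M,M-1,\dots,1$), and check that the result equals $\eta^{(p)}+\eta_i+\frac{s-i-p}{n-1}\eta=w(s,i,p)$. Multiplicity one of extremal weights then gives $\ket{\La^{s,i}}=v^{s,i}$.

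The main obstacle I anticipate is purely organisational: getting the corner colours right in all boundary cases ($i=0$ versus $i\ge 1$, the bottom corner of the Young diagram, and the degenerate range $ns=i+p$, where $M=0$), and then running the block-by-block weight count so that it lands on $w(s,i,p)$ exactly. An alternative that bypasses the explicit weight computation is to mimic the stability argument of Lemma~\ref{stab lem}: exhibit a Lusztig braid element carrying $\ket{\La^{s,i}}$ to $\ket{\La^{s',i'}}$ for the successor indices, so that the family $\{\ket{\La^{s,i}} : ns\ge i+p\}$ is generated from $\ket\emptyset$ by the braid group and hence coincides with the extremal vectors $v^{s,i}$ by construction; but this still requires pinning down the base case and the index shift, so I would expect the direct route to be shorter.
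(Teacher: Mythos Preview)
Your approach is correct and genuinely different from the paper's. The paper does not verify extremality directly, nor does it compute the cweight box-by-box. Instead it computes the total number of boxes of $\La^{s,i}$,
\[
\rho(s,i,p)=\frac{(ns+n-i-p-1)(ns+i-p)}{2(n-1)},
\]
and then checks the identity
\[
\rho(s,i,p)=\frac{n}{2}\Bigl((w(s,i,p)-\bs\La_p,\,w(s,i,p)-\bs\La_p)-(\bs\La_p,\bs\La_p)\Bigr)+\bigl(w(s,i,p),\textstyle\sum_{l}\bs\La_l\bigr),
\]
which is the standard formula for the principal degree of an extremal vector of the level-one $U_q^{hor}\widehat{\gl}_n$-module at weight $w(s,i,p)$. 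The conclusion is then read off from this degree match.

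What each route buys: your argument is more self-contained and transparent---you exhibit $\La^{s,i}$ as an $n$-core by listing the corner colours, so extremality is immediate and no affine norm formula is needed; the price is the case analysis you already anticipate (the top corner when $i\ge1$, the bottom corner, and the degenerate case $M=0$). The paper's argument trades that case analysis for a single quadratic identity in $s,i,p$, but it leans on the reader to accept (or supply) the cweight identification and the extremal-degree formula; in particular it does not spell out that $\ket{\La^{s,i}}$ actually sits at cweight $w(s,i,p)$. Your block-by-block cweight computation makes that step explicit, so your version is in fact a bit more complete.
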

\begin{proof}
The principal degree (or total number of boxes) of $\La^{s,i}$ is easily computed and is given by
\be
\rho(s,i,p)=\frac{(ns+n-i-p-1)(ns+i-p)}{2(n-1)}.
\en
Then one checks by a straightforward computation that
\be
\rho(s,i,p)=\frac{n}{2}
\bigl( (w(s,i,p)-\bs \La_p,w(s,i,p)-\bs \La_p)-(\bs \La_p,\bs \La_p)\bigr)+
(w(s,i,p),\sum_{l=0}^{n-1}{\bs\La}_l)
\en
Here $\bs\La_p$ denotes the $p$-th fundamental $\widehat{\mathfrak{sl}}_n$ weight. The lemma follows.
\end{proof}

\mdf{
\begin{lem}\label{lowest degree}
The vector $\ket{\La^{s,i}}$ has the smallest principal degree among
all vectors of \mdf{cweights} \Ref{all weights} with the given $s,i$ and various $a_i$.
\end{lem}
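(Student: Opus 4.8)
The plan is to first determine, for an arbitrary cweight, the smallest principal degree occurring in that weight subspace, and then to minimise this over the coset of cweights in question. For the first step I would use that, restricted to the horizontal subalgebra $U^{hor}_q(\widehat{\mathfrak{gl}}_n)$, the Fock module $\mathcal F^{(p)}(u)$ is the tensor product of the basic level‑one integrable $U^{hor}_q(\widehat{\mathfrak{sl}}_n)$–module $L(\bs\La_p)$ with the Fock space of the commuting Heisenberg algebra $\mathfrak{a}^{hor}$; the extremal vectors $\ket{\La^{s,i}}$, being images of $\ket{\emptyset}$ under the braid group action (Lemma \ref{Weyl-translation}), all lie in the first tensor factor. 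In that factor, for a fixed classical weight the extremal weight vector has the smallest $L_0$–eigenvalue and hence the smallest principal degree, while excitations by $\mathfrak{a}^{hor}$ only raise the degree; so the minimal principal degree in a cweight subspace is attained by the extremal vector of that cweight. (Equivalently, and more combinatorially: removing an $n$–rim hook from a Young diagram deletes exactly one box of each of the $n$ colours, so a partition has the same cweight as its $n$–core and at least as many boxes, with equality only for the $n$–core itself; $\La^{s,i}$ is an $n$–core, since its explicit row lengths yield no hook length divisible by $n$.) Applying the computation of the preceding lemma to an arbitrary extremal weight — every element of the root lattice is the cweight of a unique extremal vector of $\mathcal F^{(p)}(u)$ — shows that this minimal principal degree is the positive definite quadratic $\rho(\bs\mu)=\tfrac n2\bigl((\bs\mu-\bs\La_p,\bs\mu-\bs\La_p)-(\bs\La_p,\bs\La_p)\bigr)+(\bs\mu,\sum_{l}\bs\La_l)$, whose value at $\bs\mu=w(s,i,p)$ equals $\rho(s,i,p)=|\La^{s,i}|$.

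It then remains to show that $\rho$ restricted to the coset of cweights described by \eqref{all weights} with fixed $s$ and $i$, namely $w_0+\sum_{j=1}^{n-2}\Z\al_j$ with $w_0=w(s,i,p)$, attains its minimum at $\bs\mu=w_0$. Expanding $\rho\bigl(w_0+\sum_j a_j\al_j\bigr)-\rho(w_0)=\tfrac n2\bigl\|\sum_j a_j\al_j\bigr\|^2+\sum_j D_j a_j$, the crucial point is that the linear coefficients simplify: using $(\bs\La_p,\al_j)=\delta_{p,j}$, $(\sum_l\bs\La_l,\al_j)=1$, $(\eta,\al_j)=0$ for $1\le j\le n-2$, and $(\eta^{(p)}+\eta_i,\al_j)=\delta_{p,j}-\delta_{j,\,n-1-i}$, the $\delta_{p,j}$–terms cancel and one gets $D_j=1-n\,\delta_{j,\,n-1-i}$. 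Hence the lemma reduces to the elementary inequality $\tfrac n2\bigl\|\sum_{j=1}^{n-2}a_j\al_j\bigr\|^2+\sum_j a_j-n\,a_{n-1-i}\ \ge\ 0$ for all $(a_j)\in\Z^{n-2}$ (the last term omitted when $i=0$). Since the $\al_j$ with $1\le j\le n-2$ span a root subsystem of type $A_{n-2}$, I would establish this by completing the square: the real minimiser of this quadratic has, in the standard coordinates of $A_{n-2}$, spread (maximum coordinate minus minimum coordinate) strictly less than $1$, hence lies in the interior of the Voronoi cell of the origin, so $0$ is the unique minimiser over $\Z^{n-2}$; a short case distinction on where the extreme coordinates occur bounds that spread below $1$.

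I expect the main obstacle to be making the first step airtight, namely pinning the conventions so that "the extremal vector of a given cweight" really is the minimiser of the principal degree and that its degree is exactly $\rho$; the $n$–core formulation is the cleanest way to secure this, since it rests only on the standard bijection between $n$–cores and the root lattice and the standard quadratic formula for the size of an $n$–core. The remaining inequality is then a routine computation, whose only noteworthy feature is the cancellation producing $D_j=1-n\,\delta_{j,\,n-1-i}$ — which is precisely the reason the particular cweight $w_0=\eta^{(p)}+\eta_i+\tfrac{s-i-p}{n-1}\eta$, with $\eta^{(p)}$ and $\eta_i$ adjusted to $p$, sits at the bottom of its coset.
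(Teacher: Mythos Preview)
Your proof is correct but takes a genuinely different route from the paper. The paper's argument is a two-line representation-theoretic observation: the Fock module $\mathcal F^{(p)}(u)$ is irreducible, and $F_{j,k}\ket{\La^{s,i}}=0$ for every $j\in\{0,\ldots,n-2\}$ (combinatorially, every removable box of the staircase $\La^{s,i}$ has colour $n-1$); from these two facts the minimality in the coset follows directly, without ever writing down $\rho$ or touching the lattice geometry. You instead compute the minimum explicitly, first reducing via the $n$-core bijection (or equivalently the level-one extremal weight picture) to the quadratic $\rho(\bs\mu)$ on the root lattice, and then minimising it over the coset by completing the square. The paper's approach is much shorter and uses nothing beyond the module structure, whereas yours is more self-contained and has the virtue of explaining \emph{why} the particular representative $w(s,i,p)=\eta^{(p)}+\eta_i+\tfrac{s-i-p}{n-1}\eta$ is the one that sits at the bottom: the cancellation producing $D_j=1-n\,\delta_{j,\,n-1-i}$ is precisely the computational shadow of the paper's observation that only colour-$(n-1)$ boxes are removable from $\La^{s,i}$.
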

\begin{proof}
Note that the Fock module $\F^{(p)}(u)$ is irreducible, and
 $F_{i,k}\ket{\La^{s,i}}=0$ for $i=0,\dots,n-2$. The statement of the lemma follows.
\end{proof}
}
\medskip

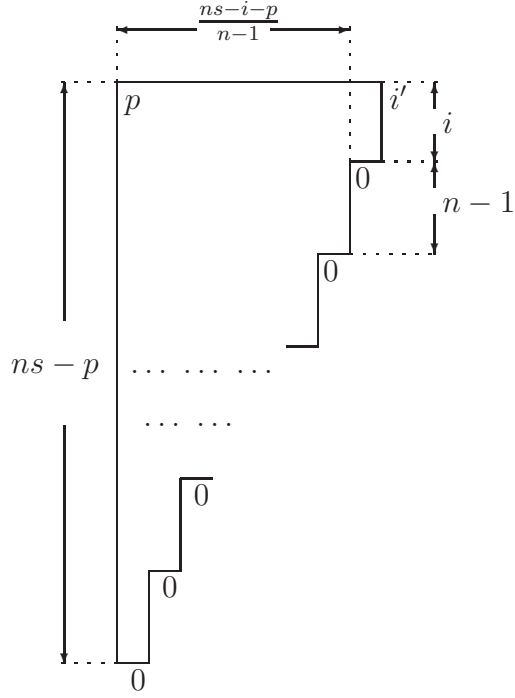
\begin{figure}\label{fock n pic}
\begin{picture}(150,270)(20, 30)

\put(108,280){\vector(1,0){30}}
\put(80,280){\vector(-1,0){30}}
\put(80,280){$\frac{ns-i-p}{n-1}$}

\put(30,170){\vector(0,1){90}}
\put(30,130){\vector(0,-1){90}}
\put(10,150){$ns-p$}

\put(170,250){\vector(0,1){10}}
\put(170,240){\vector(0,-1){10}}
\put(173,241){$i$}

\put(170,220){\vector(0,1){10}}
\put(170,210){\vector(0,-1){15}}
\put(173,211){$n-1$}

\put(50,260){\line(1,0){100}}
\put(50,260){\line(0,-1){220}}
\put(150,260){\line(0,-1){30}}
\put(150,230){\line(-1,0){12}}
\put(138,230){\line(0,-1){35}}
\put(138,195){\line(-1,0){12}}
\put(126,195){\line(0,-1){35}}
\put(126,160){\line(-1,0){12}}

\multiput(55,150)(20,0){3}{$\dots$}
\multiput(60,130)(20,0){2}{$\dots$}

\put(50,40){\line(1,0){12}}
\put(62,40){\line(0,1){35}}
\put(62,75){\line(1,0){12}}
\put(74,75){\line(0,1){35}}
\put(74,110){\line(1,0){12}}

\put(53,250){$p$}
\put(140,220){$0$}
\put(128,185){$0$}
\put(55,30){$0$}
\put(67,65){$0$}
\put(79,100){$0$}

\put(153,250){$i'$}
\multiput(138,220)(0,5){13}{\line(0,1){1}}
\multiput(150,260)(5,0){5}{\line(1,0){1}}
\multiput(150,230)(5,0){5}{\line(1,0){1}}
\multiput(138,195)(5,0){7}{\line(1,0){1}}

\multiput(50,260)(0,5){5}{\line(0,1){1}}
\multiput(50,260)(-5,0){5}{\line(-1,0){1}}
\multiput(50,40)(-5,0){5}{\line(-1,0){1}}

\end{picture}
\caption{The staircase partition $\La^{s,i}$. Here the color $i'$ is $n-i-1$.}
\end{figure}
Let $T$ be the automorphism of $\mc E_n$ given by $T=T_{n-1|0}^n$, see \Ref{T}. After restriction to the horizontal subalgebra $U^{hor}_q\bigl(\widehat{\mathfrak{sl}}_n\bigr)$, $T$ becomes the translation operator in the braid group. in terms of the Lusztig simple reflections we have, see Lemma \ref{Weyl-translation},
\be
T^{-1}=(s_{n-1}\dots s_2 s_1 s_0)^{n-1}.
\en
Note also that $T$ act as identity on the Heisenberg algebra $\mathfrak{a}^{hor}$.
Hence the operator $T^{-1}$ acts on $\mathcal F^{(p)}(u)$ and changes \mdf{cweight}
by $\eta$. In particular, we have
\be
T^{-1} v^{s,i}=a_{n,s,i} v^{s+n-1,i},
\en
for some non-zero constants $a_{n,s,i}$.

We recall that we have the subalgebra $\mathcal E_{n-1}^{\mdf{n-1|0}}\otimes \mathcal E_1^{\mdf{n-1||0}}\subset \mathcal E_n$.

The subalgebra $\mathcal E_{n-1}^{n-1|0}$ is generated by currents $E_{n-1|0}^\perp(z)$, $F_{n-1|0}^\perp(z)$,$K_{n-1|0}^{\pm,\perp}(z)$ and  $E_i^\perp\bigl(q_1^{\frac{i}{n-1}}z\bigr)$, $F_i^\perp\bigl(q_1^{\frac{i}{n-1}}z\bigr)$, $K_i^{\pm,\perp}\bigl(q_1^{\frac{i}{n-1}}z\bigr)$ with $i=1,\dots, n-2$.

The subalgebra $\mathcal E_1^{\mdf{n-1||0}}$ is generated by currents
\mdf{
$E^\perp_{n-1||0}(z)$,$F^\perp_{n-1||0}(z)$,$K^{\pm,\perp}_{n-1||0}(z)$,
}
see \Ref{n-0}.

By Theorem \ref{commute} the subalgebras $\mathcal E_{n-1}^{n-1|0}$ and $\mathcal E_1^{\mdf{n-1||0}}$ commute inside $\mc E_n$.

\mdf{
The following lemma \textcolor{jimbo}{follows} 
from the construction of the subalgebras and Lemma \ref{perp deg}.
\begin{lem}\label{graded embedding}
Let $\deg^{(n-1)}$ and $\deg^{(n)}$ denote the degree in $\E_{n-1}$ and $\E_n$, respectively.
The embedding of $\E_{n-1}$ is graded. Namely, if $x\in\E_{n-1}$ is a graded element such that
$\deg^{(n-1)}x=(\ell,\ell+d_1,\ldots,\ell+d_{n-2},k)$, then the embedded element,
which we denote also by $x$, is graded and $\deg^{(n)}x=(\ell,\ell+d_1,\ldots,\ell+d_{n-2},\ell,k)$.
Similarly, if $x\in\E_1$ is such that $\deg^{(1)}=(\ell,k)$ then $\deg^{(n)}=(\ell,\ell,\ldots,\ell,k)$.
\end{lem}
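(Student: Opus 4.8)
The plan is to reduce the statement to a check on a set of generators of the two subalgebras, using that $\deg^{(n)}$ and $\deg^{(n-1)}$ are algebra gradings. The only input needed is Lemma \ref{perp deg}: passing from a standard homogeneous element of $\E_n$ to its perpendicular partner $x\mapsto\theta^{-1}(x)$ preserves all differences of the $\Z^n$-components of the degree and interchanges the $0$-th $\Z^n$-component with minus the $\Z$-component; in particular the $0$-th $\Z^n$-component of $\deg x$ is $-\deg^\perp x$. The same holds verbatim in $\E_{n-1}$ and in $\E_1$. First I would make sense of the gradings on the relevant completions: an element of the completion is declared $\deg^{(n)}$-homogeneous of a given degree when it is a convergent sum (or limit) of genuine elements of $\E_n$ all having that common degree; with this convention $\deg^{(n)}$ remains multiplicative, and it suffices to produce such a degree, matching the asserted formula, for each of the currents generating $\E_{n-1}^{n-1|0}$ and $\E_1^{n-1||0}$.

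For colors $i=1,\dots,n-2$ the generators of $\E_{n-1}^{n-1|0}$ are the currents $\tilde E_i^\perp(z)=E_i^\perp(q_1^{i/(n-1)}z)$, $\tilde F_i^\perp(z)=F_i^\perp(q_1^{i/(n-1)}z)$, $\tilde K_i^{\pm,\perp}(z)=K_i^{\pm,\perp}(q_1^{i/(n-1)}z)$, which differ from the perpendicular $\E_n$-currents of the same color only by rescaling the spectral parameter; their $\deg^{(n)}$ is therefore read off from Lemma \ref{perp deg} applied in $\E_n$. Comparing with the $\deg^{(n-1)}$ of $E_{i,k}^\perp,F_{i,k}^\perp,H_{i,r}^\perp,K_i^{\pm}$ computed from Lemma \ref{perp deg} applied in $\E_{n-1}$, one finds in each case that the $(n-1)$-st $\Z^n$-component of $\deg^{(n)}$ coincides with the $0$-th one — because the standard generators $E_{i,k},F_{i,k},H_{i,r}$ with $1\le i\le n-2$ have equal $0$-th and $(n-1)$-st $\Z^n$-components (both $0$) and $\theta^{-1}$ preserves component differences — which is precisely the shape $(\ell,\ell+d_1,\dots,\ell+d_{n-2},\ell,k)$ asserted in the lemma.

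For the fused current I would use the expansion from Section \ref{fused current},
\[
E_{n-1|0,k}^\perp=q_1^{-k}E_{n-1,k}^\perp E_{0,0}^\perp+\sum_{i\ge 0}q_1^{-1-i-k}\bigl(q^{-1}E_{0,-1-i}^\perp E_{n-1,k+1+i}^\perp-dE_{0,-i}^\perp E_{n-1,k+i}^\perp\bigr),
\]
and likewise for $F_{n-1|0,k}^\perp$: computing $\deg^{(n)}E_{n-1,a}^\perp$ and $\deg^{(n)}E_{0,b}^\perp$ from Lemma \ref{perp deg}, every summand above turns out to be homogeneous of one and the same $\deg^{(n)}$, namely the degree predicted by the lemma for the $\E_{n-1}$-element $E_{0,k}^\perp$; hence the completion-convergent sum is homogeneous of that degree. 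The current $K_{n-1|0}^{\pm,\perp}(z)=K_{n-1}^{\pm,\perp}(q_1z)K_0^{\pm,\perp}(z)$ is an honest product, so the claim for $\tilde K_0^{\pm,\perp}(z)$ and $\tilde H_{0,r}^\perp$ follows from multiplicativity of $\deg^{(n)}$ together with the already-computed degrees of $H_{n-1,r}^\perp,H_{0,r}^\perp$. The multi-fused currents $E_{n-1||0}^\perp(z),F_{n-1||0}^\perp(z),K_{n-1||0}^{\pm,\perp}(z)$ generating $\E_1^{n-1||0}$ are, modulo the spectral shifts of Section \ref{others}, convergent sums of products of the perpendicular $\E_n$-modes of all colors $0,1,\dots,n-1$; a one-line count with Lemma \ref{perp deg} (using that the $1_j$ add up to $(1,\dots,1)$) shows that every such product has $\deg^{(n)}$ with all $n$ of its $\Z^n$-components equal and $\Z$-component $1$, which is exactly $\deg^{(1)}E_{0,k}^\perp$ transported as in the statement. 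Since a generating set has been treated and $\deg^{(n)}$ is multiplicative, the lemma follows.

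The work here is routine substitution into Lemma \ref{perp deg}; the one point that deserves attention rather than being a genuine obstacle is the passage to the completion — one has to check that each completed element entering the construction of the (possibly iterated) fused currents lies in a single $\deg^{(n)}$-graded component, which holds because it is assembled as a limit or infinite sum of genuine elements all carrying the same degree, after which multiplicativity of $\deg^{(n)}$ propagates the conclusion through products.
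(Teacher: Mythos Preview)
Your proposal is correct and is exactly the argument the paper has in mind: the paper's proof is the single sentence ``follows from the construction of the subalgebras and Lemma \ref{perp deg},'' and you have written out what that means on generators, including the completion bookkeeping for the fused currents. There is nothing to add.
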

}

Now we are in a position to describe the decomposition of the $\mathcal E_n$-module $\mc F^{(p)}(u)$. 

We write $\mc F^{(n;p)}(u)$ for the $\mathcal E_n$ Fock module, similarly we write $\mc F^{(n-1;k)}(u)$ (resp.
$\mc F^{(1)}(u)$) for the $\mathcal E_{n-1}^{n-1|0}$ (resp. $\mathcal E_1^{\mdf{n-1||0}}$) Fock modules.

\begin{thm}\label{Fock n thm}
We have an isomorphism of $\mathcal E_{n-1}^{\mdf{n-1|0}}\otimes \mathcal E_1^{\mdf{n-1||0}}$ modules
\be
\mathcal F^{(n;p)}(u)=\mathop{\oplus}\limits_{i=0}^{n-2}\mathop{\oplus}\limits_{\buildrel {s\in\Z, }\over {s\equiv p+i (\on{mod}n-1)}}
\ x^{\rho(s,i,p)}z^{w(s,i,p)}\ \mathcal F^{(n-1;n-i-1)}(-qq_1^{\frac{ns-p}{n-1}}u)\boxtimes \mathcal F^{(1)}(-qq_3^{ns-p}u).\hspace{-25pt}
\en
\end{thm}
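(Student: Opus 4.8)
The plan is to mimic the proof of Theorem \ref{Fock 2 thm} exactly, replacing $\mathcal E_2$ by $\mathcal E_n$ and the two commuting $\mathcal E_1$'s by the commuting pair $\mathcal E_{n-1}^{n-1|0}$ and $\mathcal E_1^{n-1||0}$ of Theorem \ref{commute}. First I would set up the combinatorial skeleton: for each residue class $i$ and each $s\equiv p+i\ (\mathrm{mod}\ n-1)$ with $ns\geq i+p$, the extremal vector $v^{s,i}=\ket{\La^{s,i}}$ is a staircase partition (Figure 3), and by Lemma \ref{lowest degree} it has the minimal principal degree among all vectors of its cweight. Just as $\ket{\La^s}$ was simultaneously lowest weight for $\mathcal E_1^{(1)}$ and $\mathcal E_1^{(3)}$ in Theorem \ref{Fock 2 thm}, here $\ket{\La^{s,i}}$ is simultaneously lowest weight for $\mathcal E_{n-1}^{n-1|0}$ and $\mathcal E_1^{n-1||0}$: by Lemma \ref{graded embedding} the $\perp$-generators of the two subalgebras are graded with the degrees stated there, so $F_j^\perp(q_1^{i/(n-1)}z)$ of $\mathcal E_{n-1}^{n-1|0}$ (for $j=1,\dots,n-2$), $F_{n-1|0}^\perp(z)$, and $F_{n-1||0}^\perp(z)$ all lower the principal degree, hence annihilate the minimal-degree vector $\ket{\La^{s,i}}$.

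Next I would pin down the lowest weights. Since $q^c$ acts by $1$ and $\kappa^{-1}$ acts by the level $q$ of $\mathcal F^{(n;p)}(u)$, and since (by the explicit formulas for the $\perp$-generators of the subalgebras together with $\theta$) the central element $\kappa^{-1}$ of each subalgebra acts again by $q$, the cyclic module generated by $\ket{\La^{s,i}}$ under $\mathcal E_{n-1}^{n-1|0}$ (resp. $\mathcal E_1^{n-1||0}$) is a level-$q$ lowest weight module, hence contains a copy of a Fock module $\mathcal F^{(n-1;\bullet)}$ (resp. $\mathcal F^{(1)}$). To identify the color and evaluation parameter I would compute the action of the Cartan currents on $\ket{\La^{s,i}}$. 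The color $n-i-1$ of the $\mathcal E_{n-1}$-Fock module is read off from which simple root directions are ``active'' at the corner of the staircase (the color $i'$ in Figure 3), exactly as the color $k$ of $\mathcal F^{(k)}(u)$ is determined by its lowest weight $\phi_j(z)$ in Section \ref{rep sec}. For the evaluation parameter I would use the key formula from Lemma \ref{H1 lem}, in the form
\begin{align*}
\tilde H_{0,1}=(-q)\lim_{s\to\infty}q_1^{-s}T_{n-1|0}^s\bigl(H_{0,1}+q_1^{-1}H_{n-1,1}\bigr),
\end{align*}
and its $\mathcal E_1^{n-1||0}$-analogue, rewriting $\bra{v}\tilde H_{0,1}\ket{v}=(-q)\lim_{s\to\infty}q_1^{-s}\bra{T^{-s}v}(H_{0,1}+q_1^{-1}H_{n-1,1})\ket{T^{-s}v}$ as in \eqref{comp H01}; applying this to $v=v^{s,i}$ and using $T^{-1}v^{s,i}=a_{n,s,i}\,v^{s+n-1,i}$ together with the explicit Cartan eigenvalues $\bra{\La^{s',i}}K^\pm_j(z)\ket{\La^{s',i}}$ from the Fock formulas, the limit is computed to give the eigenvalue $q^2\cdot q_1^{(ns-p)/(n-1)}u$ up to the normalization; reading the pole then gives the evaluation parameter $-qq_1^{(ns-p)/(n-1)}u$ for the $\mathcal E_{n-1}$-factor and $-qq_3^{ns-p}u$ for the $\mathcal E_1$-factor, matching the statement. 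The powers $x^{\rho(s,i,p)}z^{w(s,i,p)}$ record the principal degree and cweight of $\ket{\La^{s,i}}$, which are exactly $\rho(s,i,p)$ and $w(s,i,p)$.

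Finally I would assemble the direct sum. Because $\mathcal E_{n-1}^{n-1|0}$ and $\mathcal E_1^{n-1||0}$ commute (Theorem \ref{commute}), each $v^{s,i}$ generates a submodule isomorphic to $\mathcal F^{(n-1;n-i-1)}(-qq_1^{(ns-p)/(n-1)}u)\boxtimes\mathcal F^{(1)}(-qq_3^{ns-p}u)$, so $\mathcal F^{(n;p)}(u)$ contains the right-hand side. Equality is then a dimension count in each bigraded piece: fixing cweight and bounding the principal degree, Lemma \ref{la mu} (or rather its $\mathcal E_n$ generalization, proved by the same ``can't remove color-$0$ boxes'' argument — every partition of the appropriate cweight and bounded degree is obtained from the staircase $\La^{s,i}$ by gluing the constituent partitions $\la,\mu$ out of the appropriate ``bricks'' to the two ends) shows the graded characters match, and Lemma \ref{stab lem} together with Corollary \ref{stable basis}, suitably generalized via $T^{-1}v^{s,i}=a_{n,s,i}v^{s+n-1,i}$, shows the vectors $v^s_{\la,\mu}:=T^r\ket{\La^{r+n-1,\dots}_{\la,\mu}}$ form a basis of each cweight subspace. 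I expect the main obstacle to be the evaluation-parameter computation: controlling the $s\to\infty$ limit in the formula from Lemma \ref{H1 lem} requires knowing the precise Cartan eigenvalues on the whole tower $\{\ket{\La^{s',i}}\}$ and checking the limit stabilizes to the claimed value — this is where the combinatorics of the staircase partitions and the $q$-shifts in the Fock formulas must be reconciled carefully, and the bookkeeping of the fractional powers $q_1^{1/(n-1)}$ in $\tilde q_1,\tilde q_3$ is delicate. The combinatorial Lemma \ref{la mu}-analogue is also more intricate than in the $n=2$ case because there are now $n-1$ extremal towers interleaving, but it is still elementary.
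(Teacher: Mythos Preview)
Your proposal is correct and follows essentially the same route as the paper: both arguments reduce to showing $\ket{\La^{s,i}}$ is lowest weight for each subalgebra via Lemmas \ref{lowest degree} and \ref{graded embedding}, identify the evaluation parameters via Lemma \ref{H1 lem}, and finish with a combinatorial character count. The only place the paper adds information beyond your outline is the precise shape of the ``bricks'' for the generalized $\La^{s,i}_{\la,\mu}$: each box of $\mu$ becomes a vertical $n$-strip, while $\la$ is $(n-1)$-colored with top-left box $n-i-1$ and each color-$0$ box of $\la$ is replaced by a horizontal $(0,n-1)$ domino; this is exactly the refinement you anticipated needing for the Lemma \ref{la mu} analogue.
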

\begin{proof}
The theorem is proved similarly to Theorem \ref{Fock 2 thm}.
\mdf{
The first step is to
show that the vector $\La^{s,i}$ is the lowest weight vectors for both of the actions of
$\E_{n-1}$ and $\E_1$. In the proof we use Lemma \ref{lowest degree} and Lemma
\ref{graded embedding}.
}

Let us indicate the combinatorial picture. Similarly to $n=2$ case, we define the basis $\ket{\La^{s,i}_{\la,\mu}}$ and the partition $\La^{s,i}_{\la,\mu}$ is obtained
from $\La^{s,i}$ by adding legs in the shape of $\mu$, and arms in the shape of $\la$. Moreover, each box of 
$\mu$ is replaced with vertical strip of $n$ boxes colored $0,1,2,\dots, n-1$ (from top to bottom). The partition $\la$ is colored by $n-1$ colors, with the top left box being $n-i-1$. Then $0$ boxes of $\la$ are replaced in $\La^{s,i}_{\la,\mu}$ by horizontal dominoes colored $0,n-1$ from left to right. The other boxes of $\la$ go to the boxes of the same color in $\La^{s,i}_{\la,\mu}$.

Then we use Lemma \ref{H1 lem} in place of \Ref{H011}, \mdf{\Ref{H012}}.

We leave the rest of the details to the reader.
\end{proof}

\subsection{Generic tensor products of Fock modules}\label{products of focks branch sec}
Our next goal is to establish the decomposition of the module $\mc F^{(n,p_1)}(u_1)\otimes \mc F^{(n,p_2)}(u_2)\otimes\dots\otimes \mc F^{(n,p_k)}(u_k)$ with generic evaluation parameters $u_1,\dots,u_k$ as
$\mathcal E_{n-1}^{\mdf{n-1|0}}\otimes \mathcal E_1^{\mdf{n-1||0}}$ module.

We prepare the following lemma.

\begin{lem}\label{h1 determines tensor}
Let $W=\mc F^{(p_1)}(u_1)\otimes \mc F^{(p_2)}(u_2)\otimes\dots\otimes \mc F^{(p_k)}(u_k)$ be a tensor product of $\mc E_n$-Fock modules. Assume that $u_1,\dots,u_k,q_1,q_2$ are algebraically independent over $\Q$, in particular, that $W$ is irreducible.

Let $V$ be an $\mc E_n$ module such that 

\begin{itemize}
\item V is an irreducible lowest weight $\mc E_n$-module $V$ with lowest weight vector $v$.

\item $W$ and $V$ have the same graded character in the principal gradation.
Let $w_1,\dots,w_k$ be a basis of \mdf{the} subspace of $V$ of vectors of \mdf{the principal}
degree one. \mdf{We choose a basis}
consisting of eigenvectors of operators $K_i$ and $H_{i,1}$, $i=0,\dots,n-1$.

\item The eigenvalues of $K_i$, $H_{i,1}$, $i=0,\dots,n-1$, on vectors $v,w_1,\dots,w_k$ coincide with the corresponding eigenvalues of vectors $\ket{\emptyset}^{\otimes k}$, $\ket{\emptyset}^{\otimes {i-1}}\otimes \ket{\{1\}}\otimes \ket{\emptyset}^{\otimes {k-i}}$, $i=1,\dots,k$,
 in $W$.  
 \end{itemize}

Then $\mc E_n$-modules $V$ and $W$ are isomorphic. 
\end{lem}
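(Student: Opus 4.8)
The plan is to show that the data listed in the hypotheses pin down the isomorphism type of an irreducible lowest weight $\mc E_n$-module, given that we already know it coincides in graded character with the (irreducible) tensor product $W$. First I would recall that $W=\mc F^{(p_1)}(u_1)\otimes\cdots\otimes\mc F^{(p_k)}(u_k)$ is irreducible under the genericity assumption on $u_1,\dots,u_k,q_1,q_2$, and that it is a lowest weight module generated by the vector $\ket\emptyset^{\otimes k}$ with lowest weight $\bigl(\phi_i(z)\bigr)$ determined by the $u_j$ and $p_j$ via the coproduct formulas in Section \ref{rep sec}; in particular $W$ is quasi-finite and graded by principal degree. Since an irreducible quasi-finite lowest weight module is determined up to isomorphism by its lowest weight, it suffices to prove that the lowest weight of $V$ equals that of $W$.

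The key point is that the lowest weight $\boldsymbol{\phi}(z)=\bigl(\phi_i(z)\bigr)_{i\in\Z/n\Z}$ of an irreducible quasi-finite lowest weight module is \emph{already determined} by finitely many numbers: the eigenvalues $K_i\mapsto\phi_i(0)$ together with the eigenvalues of $H_{i,1}$ (which give the first Taylor coefficients of $\log\phi_i$) — provided we also know the degree of the rational function $\phi_i(z)$, i.e.\ the number of Fock factors of each color. So the strategy is: (i) from the $K_i$-eigenvalue on $v$ read off $\phi_i(0)$ and, since $V$ and $W$ have the same graded character, deduce that $\phi_i(z)$ is rational of the same degree as for $W$ (the level $q^k$ and the colors $p_1,\dots,p_k$ are recovered from the character and the $K_i$-weight of $v$); (ii) from the $H_{i,1}$-eigenvalues on $v$ read off the next coefficient of each $\phi_i(z)$; (iii) from the $K_i$- and $H_{i,1}$-eigenvalues on the principal-degree-one vectors $w_1,\dots,w_k$ — which correspond to the $k$ vectors $\ket\emptyset^{\otimes(j-1)}\otimes\ket{\{1\}}\otimes\ket\emptyset^{\otimes(k-j)}$ in $W$ — extract enough further data to separate the $k$ rational functions of each color and identify all their parameters. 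Concretely, in $W$ the vector obtained by adding one box of color $p_j$ to the $j$-th tensor factor carries the evaluation parameter $u_j$ in a way visible through $H_{i,1}$ (compare the computation of $H_{0,1}$-eigenvalues on $\ket{\La^s}$ in the proof of Theorem \ref{Fock 2 thm}, formula \eqref{hw}); matching these on $V$ forces the unordered collection $\{u_1,\dots,u_k\}$ and the assignment of colors, hence the full lowest weight of $V$.

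Once the lowest weight of $V$ is shown to equal that of $W$, irreducibility of both modules and the uniqueness statement for irreducible lowest weight modules (Section \ref{rep sec}) give $V\simeq W$ as $\mc E_n$-modules, completing the proof. I would phrase step (iii) carefully using the coproduct formulas: the action of $K_i^-(C_1z)\otimes E_i(C_1z)$ etc.\ shows that on $\ket\emptyset^{\otimes(j-1)}\otimes\ket{\{1\}}\otimes\ket\emptyset^{\otimes(k-j)}$ the operator $H_{i,1}$ acts by a sum of a ``background'' term (the same for all $j$) plus a $j$-dependent term proportional to $u_j$, so the $k$ eigenvalues on $w_1,\dots,w_k$ determine the multiset $\{u_1,\dots,u_k\}$ together with which color each belongs to.

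The main obstacle I expect is step (iii): one must verify that the eigenvalues of $K_i$ and $H_{i,1}$ on the $k$-dimensional principal-degree-one space genuinely separate the evaluation parameters — i.e.\ that the linear map $(u_1,\dots,u_k)\mapsto(\text{$H_{i,1}$-eigenvalues})$ is injective on the relevant locus. This is where the algebraic independence hypothesis is essential: it guarantees the $u_j$ are in ``general position'' so that no accidental coincidence of eigenvalues occurs and so that $W$ is irreducible to begin with. The bookkeeping of colors (matching $p_j\bmod n$ against the index $i$ for which the $H_{i,1}$-eigenvalue of $w_j$ differs from the background) is routine but must be done explicitly using the coproduct and the explicit Fock action from Section \ref{rep sec}.
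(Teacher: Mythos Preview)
Your outline captures the first half of the paper's argument correctly: the $K_i$-eigenvalues on $v$ and on the degree-one vectors $w_j$ fix the colours $p_j$ and bound the number of zeros and poles of each $\phi_i(z)$, and the $H_{i,1}$-eigenvalues on the $w_j$ (compared to those on $v$) recover the evaluation parameters $u_j$. This is exactly how the paper proceeds, and your remark that the difference of $H_{i,1}$-eigenvalues between $\ket\emptyset^{\otimes k}$ and the one-box vectors is proportional to $u_j$ is precisely the mechanism the paper uses (it records this as $-(q+q^{-1})u$).

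The genuine gap is the clause ``hence the full lowest weight of $V$'' at the end of your step~(iii). Knowing the multiset $\{u_j\}$ together with the colours gives you only the \emph{poles} of each $\phi_i(z)$; it does not tell you where the zeros sit. A rational function with the correct poles at $z=u_j$ and the correct value at $\infty$ still has one free parameter per factor: in the paper's language, the lowest weight could equally well be that of a tensor product of vacuum Macmahon modules $\mc M^{(p_j)}(u_j,\kappa_j)$ with arbitrary levels $\kappa_j$, and the $H_{i,1}$-data on degree~$\le 1$ cannot distinguish these (the difference of $H_{i,1}$-eigenvalues is $-(q+q^{-1})u_j$ for any $\kappa_j$). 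Your steps (i)--(iii) therefore stop one parameter-family short of pinning down the lowest weight.

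The paper closes this gap by an argument you do not have: it realises the candidate lowest weight as that of a cyclic tensor product of Macmahon modules $\bigotimes_j\mc M^{(p_j)}(u_j,\kappa_j)$, and then invokes the equality of graded characters at principal degree~$2$. In $W$ there are $k$ linearly independent singular vectors at degree~$2$ (one from each Fock factor), and the paper argues that in the Macmahon tensor product this many singular vectors can occur only if every $\kappa_j=q$, i.e.\ every factor collapses to a Fock module. So the graded character is used not, as you suggest, to read off the degree of $\phi_i$ at the outset, but at the very end to kill the residual $\kappa_j$-freedom. Without this degree-$2$ step your argument is incomplete.
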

\begin{proof}
Let $K_iv=q^{-k_i}v$. Then $k_i$ is the number of Fock representations $\F^{(p_j)}(u_j)$ in $W$ such that $p_j=i$. 
Then the number of $w_j$ such that $K_iw_j=q^{-k_i+2}w_j$ is $k_i$. It follows that the $i$-th component of the lowest weight of $v$ has at most $k_i$ zeros and at most $k_i$ poles. 

Therefore the lowest weight of $V$ coincides with that of a product of vacuum Macmahon modules  
$\mc M^{(p_1)}(\tilde u_1, \kappa_1)\otimes \mc M^{(p_2)}(\tilde u_2, \kappa_2)\otimes\dots\otimes \mc M^{(p_k)}(\tilde u_k,\kappa_k)$, see \cite{FJMM2}, for some levels $\kappa_i$ and some evaluation points parameters $\tilde u_j$. 
On the other hand, for the Fock module $\mc F^{(i)}(u)$, 
the difference of 
eigenvalues of $H_{i,1}$ on $|\emptyset\rangle$ and on $|(1)\rangle$ 
is given by $-(q+q^{-1})u$. 
The same holds also for the Macmahon module $\mc M^{(i)}(u,\kappa)$. 
From the hypothesis of the lemma, we then conclude 
that $\{\tilde u_i\}_{i=1}^k=\{u_i\}_{i=1}^k$.  In particular, the tensor product of Macmahon modules is well-defined.

Note that given lowest weight of $V$, the choice of the Macmahon modules (the choice of $\kappa_j$) is not unique, it is defined by the choice of pairing up the factors in the numerator with factors in the denominator. Since zeros in the denominator are algebraically independent, for each factor in the numerator, there is at most one zero in the denominator, such that the corresponding Macmahon module is not irreducible. Let us choose the pairing such that as many Macmahon modules as possible are not irreducible. We can also choose the order of factors in such a way that the tensor product is cyclic.

Then in the tensor product we need to have $k$ linearly independent singular vectors of principal degree $2$.
One can see that it is possible only if $\kappa_i=q$ for all $i$. The lemma follows.
\end{proof}

Note that the subalgebras $\mathcal E_{n-1}^{\mdf{n-1|0}}$ and $\mathcal E_1^{\mdf{n-1||0}}$ are not Hopf subalgebras. However, the decomposition formula looks as a tensor product formula.

\begin{thm}\label{k Fock n thm}
We have an isomorphism of $\mathcal E_{n-1}^{\mdf{n-1|0}}\otimes \mathcal E_1^{\mdf{n-1||0}}$ modules
\bea\label{k Fock n decomp}
\lefteqn{\mathop{\otimes}\limits_{j=1}^k \mc F^{(n,p_j)}(u_j)
=\mathop{\oplus}\limits_{i_1,\dots,i_k=0}^{n-2}\mathop{\oplus}\limits_{\buildrel {s_1,\dots,s_k\in\Z, }\over {s_j\equiv p_j+i_j (\on{mod}n-1)}}
x^{\sum\limits_{j=1}^k\rho(s_j,i_j,p_j)}z^{\sum\limits_{j=1}^kw(s_j,i_j,p_j)}}\notag\\ &&
\hspace{90pt}\times \left(\mathop{\otimes}\limits_{j=1}^k\mathcal F^{(n-1;n-i_j-1)}(-qq_1^{\frac{ns_j-p_j}{n-1}}u_j)\right)\boxtimes \left(\mathop{\otimes}\limits_{j=1}^k\mathcal F^{(1)}(-qq_3^{ns_j-p_j}u_j)\right).
\ena
\end{thm}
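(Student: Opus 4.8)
The plan is to bootstrap from the single-Fock decomposition (Theorem \ref{Fock n thm}) and the identification lemma (Lemma \ref{h1 determines tensor}), in the same spirit as the passage from Theorem \ref{Fock 2 thm} to the multi-Fock case for $n=2$. First I would fix a generic choice of evaluation parameters $u_1,\dots,u_k$ so that all $u_j,q_1,q_2$ are algebraically independent over $\Q$; this guarantees that $W=\otimes_{j=1}^k\mc F^{(n,p_j)}(u_j)$ is an irreducible lowest weight $\mc E_n$-module. Since $\mc E_{n-1}^{n-1|0}$ and $\mc E_1^{n-1||0}$ commute inside (a completion of) $\mc E_n$ by Theorem \ref{commute}, and both act admissibly on $W$ (admissibility passes to the subalgebras, see the remark after Theorem \ref{sub alg}), $W$ restricts to a module over $\mc E_{n-1}^{n-1|0}\otimes\mc E_1^{n-1||0}\cong\mc E_{n-1}(\tilde q_1,\tilde q_2,\tilde q_3)\otimes\mc E_1(\cdots)$.

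Next I would locate the lowest weight vectors. For each tuple $(i_1,\dots,i_k)$ and each tuple $(s_1,\dots,s_k)$ with $s_j\equiv p_j+i_j\pmod{n-1}$, form the vector $v=v^{s_1,i_1}\otimes\cdots\otimes v^{s_k,i_k}$, where $v^{s,i}$ is the extremal vector from Section \ref{fock n branch sec}; using the automorphism $T=T_{n-1|0}^n$ componentwise one reduces (as in Lemma \ref{stab lem}) to the case $ns_j\ge i_j+p_j$ for all $j$, where $v^{s_j,i_j}=\ket{\La^{s_j,i_j}}$ is annihilated by all $F_{i,k}$, $i=0,\dots,n-2$. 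By the grading statement (Lemma \ref{graded embedding}) and the formula for $\tilde H_{i,1}$ (Lemma \ref{H1 lem}), together with the computation $T^s\circ(\,\cdot\,)\circ T^{-s}$ applied to correlators $\langle T^{-s}v|\ \cdots\ |T^{-s}v\rangle$ exactly as in \eqref{comp H01}, one shows $v$ is a joint lowest weight vector for $\mc E_{n-1}^{n-1|0}\otimes\mc E_1^{n-1||0}$, and one reads off its $\tilde H_{i,1}$-eigenvalues, hence the evaluation parameters $-qq_1^{(ns_j-p_j)/(n-1)}u_j$ and $-qq_3^{ns_j-p_j}u_j$ of the Fock factors. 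Because the levels of $\mc E_{n-1}^{n-1|0}$ and $\mc E_1^{n-1||0}$ equal those of $\mc E_n$ on $W$ (level $q^k$ overall, splitting as a product of level-$q$ contributions), the submodule generated by $v$ under the two commuting subalgebras is a quotient of the tensor product of Fock modules appearing on the right-hand side of \eqref{k Fock n decomp}; to see it is actually a tensor product of irreducible Focks (not a proper quotient, and with all $\kappa_j=q$) I would invoke Lemma \ref{h1 determines tensor} applied to each commuting factor separately — for $\mc E_1^{n-1||0}$ directly, and for $\mc E_{n-1}^{n-1|0}$ after noting $\otimes_j\mc F^{(n-1;n-i_j-1)}$ is itself a generic tensor product of $\mc E_{n-1}$-Fock modules, so the same lemma (one dimension down) identifies it.

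Finally I would check that these summands exhaust $W$. The clean way is a graded character count in the principal gradation: the contribution of the summand indexed by $(i_1,s_1,\dots,i_k,s_k)$ carries the prefactor $x^{\sum_j\rho(s_j,i_j,p_j)}z^{\sum_j w(s_j,i_j,p_j)}$, and one verifies the identity of formal characters
\begin{align*}
\prod_{j=1}^k \chi\!\left(\mc F^{(n,p_j)}(u_j)\right)
=\sum_{\substack{i_1,\dots,i_k\\ s_1,\dots,s_k}}
x^{\sum_j\rho(s_j,i_j,p_j)}z^{\sum_j w(s_j,i_j,p_j)}
\prod_{j=1}^k\chi\!\left(\mc F^{(n-1)}\right)\chi\!\left(\mc F^{(1)}\right)
\end{align*}
by factoring it as the $k$-th tensor power of the $k=1$ identity, which is exactly Theorem \ref{Fock n thm}. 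Combined with the fact that the summands are linearly independent (distinct cweights $\sum_j w(s_j,i_j,p_j)$, and within a cweight the stable-basis argument of Corollary \ref{stable basis} applied componentwise), this forces \eqref{k Fock n decomp}. The main obstacle I anticipate is the bookkeeping in the middle step: verifying that $v^{s_1,i_1}\otimes\cdots\otimes v^{s_k,i_k}$ is genuinely a lowest weight vector for the \emph{fused} currents — the action of $\mc E_{n-1}^{n-1|0}$ and $\mc E_1^{n-1||0}$ is given in perpendicular/fused form while $W$ is described in standard generators, so one must push the limit-of-$T^s$-conjugates computation through a $k$-fold tensor product using the (formal) coproduct, controlling the infinite sums via admissibility; this is routine but delicate, and it is where the genericity hypothesis on $u_1,\dots,u_k$ is genuinely used (to keep the tensor product irreducible and to make Lemma \ref{h1 determines tensor} applicable).
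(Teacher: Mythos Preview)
Your proposal is correct and follows essentially the same route as the paper: locate the candidate lowest weight vectors $\otimes_{j=1}^k\ket{\La^{s_j,i_j}}$, use the $T$-shift description of the fused currents together with the fact that $T$ acts on a tensor product as the tensor product of $T$'s to reduce the lowest-weight check to the stable zone (large $s_j$), compute the $\tilde H_{i,1}$-eigenvalues via Lemma \ref{H1 lem}, and invoke Lemma \ref{h1 determines tensor} to identify the resulting module. The only minor variation is in the exhaustion step: the paper argues directly that the span of $\otimes_{j}\ket{\La^{s_j,i_j}_{\la_j,\mu_j}}$ with fixed $(s_j,i_j)$ is stable under the subalgebra and gives the right size combinatorially, whereas you factor the character identity as the $k$-th power of the $k=1$ case; both are equivalent and your version is arguably cleaner.
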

\begin{proof}
Clearly it is enough to show that the vectors $\otimes_{j=1}^k \ket{\La^{s_j,i_j}}$ are lowest weight vectors ?with respect to $\mathcal E_{n-1}^{\mdf{n-1|0}}$ and  $\mathcal E_1^{\mdf{n-1||0}}$ and to compute their lowest weights in accordance with \Ref{k Fock n decomp}.

Recall that we defined a basis of the Fock space  $\ket{\La^{s,i}_{\mdf{\la,\mu}}}$, see proofs of
Theorems \ref{Fock 2 decomp} and \ref{Fock n thm}. Consider the basis of
$\mathop{\otimes}\limits_{j=1}^k \mc F^{(n,p_j)}(u_j)$ given by
$\mathop{\otimes}\limits_{j=1}^k\ket{\La^{s_j,i_j}_{\mdf{\la_j,\mu_j}}}$.
Here $i_j=0,\dots,n-2$, $s_j\in\Z$ and $\la_j,\mu_j$ are arbitrary partitions. 

Consider $\mathcal E_{n-1}^{\mdf{n-1|0}}$. By definition, any given
$\tilde g\in \mc E_{n-1}^{\mdf{n-1|0}}$ acts as a limit of operators $q_{1}^{rs} T^s g$ for some $r$
and some $g\in\mc E_n$. We have 
$(T^s g) v= T^s \circ g \circ T^{-s}v$. Also, note that action  of $T$ in the tensor product of modules
is given by the tensor product of action of $T$ in factors.  

Therefore to compute action of $\tilde g$ we have to apply $g$ to an element shifted far to
the stable zone, that is to $\otimes_{j=1}^k \ket{\La^{s_j,i_j}_{\mdf{\la_j,\mu_j}}}$ with large $s_j$. 

If all $s_j$ are large the vector $\otimes_{j=1}^k \ket{\La^{s_j,i_j}_{\mdf{\la_j,\mu_j}}}$
corresponds to a vector given by a product of partitions of the type shown on Figure 2.
Then the operator $g$
acting on this vector \mdf{produces}
a linear combination of vectors corresponding to the product of partitions
where a fixed total amount of boxes has been removed and added.

For any fixed $r$, there exist some $M>0$ such that if all $s_j>M$, there is no vector of degree less then $\otimes_{j=1}^k \ket{\La^{s_j,i_j}}$ which corresponds to a set of partitions that can be obtained from $\La^{s_j,i_j}$ by a change of $r$ boxes. It follows that this vector is a lowest weight vector with respect to $\mathcal E_{n-1}^{\mdf{n-1|0}}$ and similarly with respect to $\mathcal E_1^{\mdf{n-1||0}}$.

Moreover, for the same reason, the span of all vectors
$\otimes_{j=1}^k \ket{\La^{s_j,i_j}_{\mdf{\la_j,\mu_j}}}$ with fixed $s_j,i_j$ and all tuples of partitions
$\la_j,\mu_j$ is stable under the action of the $\mathcal E_{n-1}^{\mdf{n-1|0}}$.

To determine the lowest weight of $\otimes_{j=1}^k \ket{\La^{s_j,i_j}}$, we check the conditions of Lemma \ref{h1 determines tensor}. By a similar argument as above, the vectors of principal degree $1$ are obtained from $\otimes_{j=1}^k \ket{\La^{s_j,i_j}}$ by adding one box or a domino. The action of $H_{i,1}$ is computed then as before.
\end{proof}

\subsection{The modules $\mc N_{\al,\beta}^{(p)}$}\label{m branch sec}
In this section we deduce the decomposition of modules $\mc N_{\al,\beta}^{(p)}(u)$ from Theorem \ref{k Fock n thm}.

Recall that $\mc N_{\al,\beta}^{(p)}(u)$ is a submodule in the tensor product
$\mathcal{F}^{(p_1)}(u_1)\otimes \cdots \otimes \mathcal{F}^{(p_k)}(u_k)$, see \Ref{N in otimes}, where $p_i,u_i$ are given by \Ref{colors}. Recall also, that $a_i,b_i$ are given by \Ref{ab}.

Assume that $p_i\in\{0,\dots, n-1\}$, and define the numbers $m_i$ by
\bea\label{m}
p_i=p_{i+1}+b_i-a_i-m_in.
\ena

\begin{thm}\label{N thm}
We have an isomorphism of $\mathcal E_{n-1}^{\mdf{n-1|0}}\otimes \mathcal E_1^{\mdf{n-1||0}}$ modules
\bea\label{N decomp}
\lefteqn{\mc N_{\al,\beta}^{(n;p)}(u)=\mathop{\oplus}\limits_{i_1,\dots,i_k=0}^{n-2}\mathop{\oplus}\limits_{\buildrel {s_1,\dots,s_k\in\Z, }\over {s_j\equiv p_j+i_j (\on{mod}n-1)}}
x^{\sum\limits_{j=1}^k\rho(s_j,i_j,p_j)}z^{\sum\limits_{j=1}^kw(s_j,i_j,p_j)}}\notag\\ &&
\hspace{190pt}\times \mc N_{\gamma(s),\beta}^{(n-1;n-1-i_k)}(-qq_1^{\frac{ns_1-p_1}{n-1}}u)\boxtimes \mc N_{\gamma(s),\alpha}^{(1)}(-qq_3^{ns_1-p_1}u),
\ena
where
\be
l_j(s)=\mdf{s_j-s_{j+1}+m_j}, \qquad \ga_j(s)-\ga_{j+1}(s)=l_j(s),
\en
and the summation is over $s_1,\dots,s_k$ such that $l_j(s)\geq 0$, $j=1,\dots,k-1$.
\end{thm}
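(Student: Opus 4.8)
The plan is to deduce Theorem \ref{N thm} from Theorem \ref{k Fock n thm} by restricting the decomposition of the ambient tensor product $\mathcal{F}^{(p_1)}(u_1)\otimes\cdots\otimes\mathcal{F}^{(p_k)}(u_k)$ to the submodule $\mc N_{\al,\beta}^{(n;p)}(u)$. First I would recall from \Ref{N in otimes} that $\mc N_{\al,\beta}^{(n;p)}(u)$ is the subspace spanned by those $\ket{\la^{(1)}}\otimes\cdots\otimes\ket{\la^{(k)}}$ satisfying the nesting conditions \Ref{part cond} with $a_i,b_i$ as in \Ref{ab}. The key point is that, since $\mc N_{\al,\beta}^{(n;p)}(u)$ is an $\mc E_n$-submodule, and $\mathcal E_{n-1}^{\mdf{n-1|0}}\otimes \mathcal E_1^{\mdf{n-1||0}}$ is a subalgebra of a completion of $\mc E_n$ acting on admissible modules, it is also invariant under this subalgebra. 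Hence it decomposes as a direct sum of the intersections of $\mc N_{\al,\beta}^{(n;p)}(u)$ with the summands appearing in \Ref{k Fock n decomp}. The task is then to identify each such intersection.

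Concretely, in Theorem \ref{k Fock n thm} the $(s_j,i_j)$-summand of $\bigotimes_j\mathcal F^{(n,p_j)}(u_j)$ is spanned, in the stable zone, by vectors $\bigotimes_{j=1}^k\ket{\La^{s_j,i_j}_{\la_j,\mu_j}}$ and is identified with $\bigl(\bigotimes_j\mathcal F^{(n-1;n-i_j-1)}(\cdots)\bigr)\boxtimes\bigl(\bigotimes_j\mathcal F^{(1)}(\cdots)\bigr)$ via the combinatorial dictionary in that proof (each box of $\mu_j$ becomes a vertical strip of $n$ boxes, each color-$0$ box of $\la_j$ becomes a horizontal domino, etc.). I would then translate the nesting conditions \Ref{part cond} on the $\la^{(j)}$ through this dictionary applied (after shifting by a large power of $T$) to $\La^{s_j,i_j}_{\la_j,\mu_j}$. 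The claim to verify is that, for $s_j$ in the stable zone, the condition \Ref{part cond} relating $\la^{(j)}$ and $\la^{(j+1)}$ becomes precisely a nesting condition between the ``$\la$-data'' of consecutive factors and, separately, between the ``$\mu$-data'' of consecutive factors, with the shifts governed by the combination $l_j(s)=s_j-s_{j+1}+m_j$ where $m_j$ is defined in \Ref{m}. This is exactly what is required for the $\la$-factors to span $\mc N_{\gamma(s),\beta}^{(1)}$ and the $\mu$-factors to span $\mc N_{\gamma(s),\alpha}^{(n-1)}$ (with the roles of $\alpha,\beta$ swapped between the two tensor legs, as dictated by which corner embedding acts on which combinatorial direction, cf.\ the arm/leg description in the proof of Theorem \ref{Fock n thm}); the nonnegativity $l_j(s)\geq0$ is the condition for the nesting $\gamma_j(s)\geq\gamma_{j+1}(s)$ to be meaningful, which explains the range of summation.

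I would organize the verification in three steps: (i) restrict \Ref{k Fock n decomp} to the submodule and reduce to a combinatorial statement about which basis vectors $\bigotimes_j\ket{\La^{s_j,i_j}_{\la_j,\mu_j}}$ lie in $\mc N_{\al,\beta}^{(n;p)}(u)$; (ii) compute, in the stable zone, how the offsets between $\La^{s_j,i_j}$ and $\La^{s_{j+1},i_{j+1}}$ interact with \Ref{part cond}, extracting the bookkeeping identity $p_i=p_{i+1}+b_i-a_i-m_in$ and the resulting value $l_j(s)=s_j-s_{j+1}+m_j$; (iii) match the evaluation parameters and the prefactor $x^{\sum\rho(s_j,i_j,p_j)}z^{\sum w(s_j,i_j,p_j)}$, which are inherited unchanged from Theorem \ref{k Fock n thm}. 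For (iii) one also uses that the defining inequalities of $\mc N_{\gamma(s),\beta}^{(n-1)}$ and $\mc N_{\gamma(s),\alpha}^{(1)}$, read off via \Ref{part cond}--\Ref{ab}, depend only on the differences $\gamma_j(s)-\gamma_{j+1}(s)$, so only $\gamma(s)$ up to an overall shift matters, consistent with the notation in the statement.

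The main obstacle I expect is step (ii): carefully tracking the interplay between the three separate kinds of shifts — the ``staircase'' offsets intrinsic to $\La^{s_j,i_j}$ (depending on $s_j,i_j,p_j$ and involving the $\lfloor\cdot\rfloor$ in the lemma before Figure 3), the $a_i,b_i$ shifts in the nesting condition \Ref{part cond}, and the domino/strip rescaling of the combinatorial dictionary — and showing they combine cleanly into the single integer $l_j(s)$ with no parity or divisibility obstruction beyond the stated congruence $s_j\equiv p_j+i_j\pmod{n-1}$. A secondary subtlety is confirming that passing to the stable zone (large $s_j$) is harmless: one must check that the nesting conditions \Ref{part cond}, the $T$-shift, and the identification with nested Fock tensor products are all compatible, so that the decomposition obtained in the stable zone is the genuine $\mathcal E_{n-1}^{\mdf{n-1|0}}\otimes \mathcal E_1^{\mdf{n-1||0}}$-module decomposition of $\mc N_{\al,\beta}^{(n;p)}(u)$; this parallels the passage already used in the proof of Theorem \ref{k Fock n thm}, so I would invoke that machinery rather than redo it. The remaining details are routine and I would leave them to the reader, as the paper does for the analogous earlier theorems.
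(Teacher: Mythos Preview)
Your proposal is correct and follows essentially the same route as the paper: deduce the result from Theorem \ref{k Fock n thm} by restricting the decomposition of the ambient tensor product to the submodule $\mc N_{\al,\beta}^{(n;p)}(u)$, then identify each surviving piece combinatorially in the stable zone and match lowest weights and characters. The one point the paper makes explicit that you gloss over is why the decomposition \Ref{k Fock n decomp} is available at the \emph{special} evaluation parameters \Ref{colors} (Theorem \ref{k Fock n thm} via Lemma \ref{h1 determines tensor} was argued for generic $u_j$): the paper notes that all operators depend algebraically on the $u_j$, so one may specialize; you should insert that remark before invoking the intersection argument.
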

\begin{proof}
Theorem \ref{N thm} is deduced from Theorem \ref{k Fock n thm}. The module 
$\mc N_{\al,\beta}^{(n;p)}(u)$ is the submodule of a tensor product of Fock modules, see \Ref{N in otimes}, 
which is described by conditions \Ref{part cond}.

Therefore, we start from the generic tensor product of Fock modules as in Theorem \ref{k Fock n thm}. Note that the action of all operators depends on evaluation parameters algebraically. Therefore we can specialize the evaluation parameters to any values where the tensor product is well-defined. Let us specialize the evaluation parameters as in \Ref{colors}. Then we discard the representations of
$\mathcal E_{n-1}^{\mdf{n-1|0}}\otimes \mathcal E_1^{\mdf{n-1||0}}$ whose lowest weight vectors
do not satisfy \Ref{part cond}. Next, we check that the surviving lowest weight vectors
have exactly lowest weights of
$\mc N_{\gamma(s),\beta}^{(n-1;n-1-i_k)}(-qq_1^{\frac{ns_1-p_1}{n-1}}u)\boxtimes
\mc N_{\gamma(s),\alpha}^{(1)}(-qq_3^{ns_1-p_1}u)$. It shows that the left hand side
of \Ref{N decomp} contains the right hand side. It remains to see that both sides coincide which is
readily done in the stable limit of large enough $s_j$.
\end{proof}

We do the following change of summation variables in formula \Ref{N decomp}.
Let $y=(y_1,\dots,y_{k-1})$ be a vector with coordinates:
\bea\label{y}
y_r=(ns_r-p_r)-(ns_{r+1}-p_{r+1})=\mdf{nl_r+a_r-b_r},
\qquad r=1,\dots,k-1.\hspace{-30pt}  
\ena
Let also
\be
\bar y=\sum_{r=1}^k(ns_r-p_r)=n(n-1)j+(n-1)\sum_{r=1}^k p_r+n\sum_{r=1}^ki_r,
\en
where 
\be
j=\frac{1}{n-1}\sum_{r=1}^k(s_r-i_r-p_r)\in\Z.
\en

Define
\be
w(j,i,p,\al,\beta)=\sum\limits_{r=1}^k\bigl(\eta^{(p_r)}+\eta_{i_r}\bigr)-j\eta,
\en
where we used the notation $i=(i_1,\dots,i_k)$.

Let $C_k$ be the Cartan matrix of $\mathfrak{sl}_k$. We have $(C_k^{-1})_{ir}=(C_k^{-1})_{ri}=i(k-r)/k$, where $1\leq i\leq r\leq k-1$.

\begin{cor}
We have an isomorphism of $\mathcal E_{n-1}^{\mdf{n-1|0}}\otimes \mathcal E_1^{\mdf{n-1||0}}$ modules
\bea\label{N decomp new}
\lefteqn{\mc N_{\al,\beta}^{(n;p)}(u)=\mathop{\oplus}\limits_{i_1,\dots,i_k=0}^{n-2}\mathop{\oplus}\limits_{j\in\Z}
 \left(
x^{\frac{\bar y^2}{2(n-1)k}+\frac{\bar y}{2}+\frac{1}{2(n-1)}\sum\limits_{r=1}^k(n-i_r-1)i_r}z^{w(j, i,p,\al,\beta)}\right)}\notag\\ &&
\times\left(\mathop{\oplus}\limits_{l_1,\dots,l_{k-1}=0}^\infty x^{\frac{y^tC_k^{-1} y }{2(n-1)}}\mc N_{\gamma( l),\beta}^{(n-1;n-1-i_k)}(-qq_1^{\frac{s}{n-1}}u)\boxtimes \mc N_{\gamma(l),\alpha}^{(1)}(-qq_3^{s}u)\right),
\ena
where 
\be
s=\frac{1}{k}(\bar y-\sum_{r=1}^{k-1}(r-k)y_r),
\en
while the summation is over $l_1,\dots, l_{k-1}$ such that
\be
\mdf{l_r+a_r}\equiv i_r-i_{r+1}+b_r \ (\on{mod} n-1)
\en
and
\be
\sum_{r=1}^{k-1} rl_r \equiv(n-1) j+\sum_{r=1}^k(i_r+p_r)+\sum_{r=1}^{k-1}\mdf{ rm_r}\ (\on{mod} k).
\en
\end{cor}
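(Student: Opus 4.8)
The plan is to derive the Corollary purely as a change of summation variables in Theorem~\ref{N thm}, with no new representation-theoretic input. First I would record what must be matched: the outer direct sum in \Ref{N decomp} runs over $i_1,\dots,i_k\in\{0,\dots,n-2\}$ and over $s_1,\dots,s_k\in\Z$ with $s_j\equiv p_j+i_j\pmod{n-1}$ and $l_j(s)=s_j-s_{j+1}+m_j\ge 0$, while the outer sum in \Ref{N decomp new} runs over the same $i_r$, over $j\in\Z$, and over $l_1,\dots,l_{k-1}\in\Z_{\ge 0}$. The key bijection is $(s_1,\dots,s_k)\leftrightarrow (j;l_1,\dots,l_{k-1})$: given $i,p$, the congruence conditions on the $s_r$ force $\sum_r(s_r-i_r-p_r)\in(n-1)\Z$, so $j:=\frac{1}{n-1}\sum_r(s_r-i_r-p_r)\in\Z$ is well defined; and $l_r=s_r-s_{r+1}+m_r$ recovers the successive differences, so together with $j$ (equivalently with $\sum_r s_r$) the vector $(s_1,\dots,s_k)$ is uniquely determined. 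I would write out the inverse explicitly: from $l_r$ one gets $s_r-s_{r+1}=l_r-m_r$, hence $s_r=s_k+\sum_{t\ge r}(l_t-m_t)$, and then $\sum_r s_r$ is pinned down by $j$. This gives the formula $s=\frac1k(\bar y-\sum_{r=1}^{k-1}(r-k)y_r)$ for $ns_1-p_1$ after substituting $y_r=(ns_r-p_r)-(ns_{r+1}-p_{r+1})=nl_r+a_r-b_r$ (using $m_i n=p_{i+1}+b_i-a_i-p_i$ from \Ref{m}), since $y$ is an invertible linear image of $(s_1,\dots,s_k)$ modulo the global shift recorded by $\bar y=\sum_r(ns_r-p_r)$.

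Next I would translate the two residual congruences in the Corollary. The condition $l_r+a_r\equiv i_r-i_{r+1}+b_r\pmod{n-1}$ is exactly the statement that $s_r\equiv p_r+i_r$ and $s_{r+1}\equiv p_{r+1}+i_{r+1}\pmod{n-1}$ are simultaneously consistent with $l_r=s_r-s_{r+1}+m_r$ (subtract the two, use $m_rn\equiv -(p_r-p_{r+1})-(a_r-b_r)\bmod(n-1)$, and note $n\equiv 1\bmod(n-1)$ so $m_rn\equiv m_r$). The condition $\sum_{r=1}^{k-1}rl_r\equiv (n-1)j+\sum_r(i_r+p_r)+\sum_r rm_r\pmod k$ is precisely the solvability-in-$\Z$ condition for the linear system recovering $(s_1,\dots,s_k)$ from $(j;l_1,\dots,l_{k-1})$: the expression $\sum_r rl_r-\sum_r rm_r=\sum_r r(s_r-s_{r+1})=\sum_r s_r-ks_k$ must be congruent mod $k$ to $\sum_r s_r$, and $\sum_r s_r\equiv\sum_r(i_r+p_r)+(n-1)j$ by definition of $j$; so the congruence is automatic and simply encodes that a valid integer tuple $(s_r)$ exists. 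I would present this as: the map $(s_1,\dots,s_k)\mapsto (i, j, l_1,\dots,l_{k-1})$ is a bijection from the index set of \Ref{N decomp} onto the index set of \Ref{N decomp new}.

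Then I would verify that each summand transforms correctly under this bijection. For the representations: $-qq_1^{(ns_1-p_1)/(n-1)}u$ becomes $-qq_1^{s/(n-1)}u$ and $-qq_3^{ns_1-p_1}u$ becomes $-qq_3^{s}u$ by the formula for $s$ just derived; and $\gamma(s)$ with $\gamma_j(s)-\gamma_{j+1}(s)=l_j(s)=s_j-s_{j+1}+m_j=l_j$ (the new integer variable), so $\mc N^{(n-1;n-1-i_k)}_{\gamma(s),\beta}=\mc N^{(n-1;n-1-i_k)}_{\gamma(l),\beta}$ and likewise for the $\mc E_1$ factor; the color $n-1-i_k$ is unchanged. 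For the grading exponents: $z^{\sum_j w(s_j,i_j,p_j)}$ must be shown equal to $z^{w(j,i,p,\al,\beta)}$, which follows from $w(s_j,i_j,p_j)=\eta^{(p_j)}+\eta_{i_j}+\frac{s_j-i_j-p_j}{n-1}\eta$ and $\sum_j\frac{s_j-i_j-p_j}{n-1}=j$. The $x$-exponent requires the most care: I would expand $\sum_j\rho(s_j,i_j,p_j)$ using $\rho(s,i,p)=\frac{(ns+n-i-p-1)(ns+i-p)}{2(n-1)}$, substitute $n s_r-p_r$ in terms of $\bar y$ and the $y_r$ via the orthogonal-type decomposition $ns_r-p_r = \frac{\bar y}{k}+(\text{linear in }y)$, and collect the quadratic part into $\frac{\bar y^2}{2(n-1)k}+\frac{y^tC_k^{-1}y}{2(n-1)}$ (using $(C_k^{-1})_{ir}=i(k-r)/k$ for the cross terms among the $y_r$), the linear part into $\frac{\bar y}{2}$, and the constant part into $\frac{1}{2(n-1)}\sum_r(n-i_r-1)i_r$. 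The identity to check is $\sum_r X_r^2 = \frac{(\sum_r X_r)^2}{k}+\frac{1}{k}\sum_{r<t}( X_r-X_t)^2\cdot(\text{appropriate weights})$ rewritten via $C_k^{-1}$; this is the one genuinely mechanical computation. The main obstacle, then, is purely bookkeeping: confirming that the quadratic form $\sum_r(ns_r-p_r)^2$ decomposes as $\frac{\bar y^2}{k}+y^tC_k^{-1}y$ under the substitution $y_r=(ns_r-p_r)-(ns_{r+1}-p_{r+1})$, and that the linear and constant remainders assemble into the stated exponent of $x$ — everything else is the elementary bijection of index sets described above. I would end the proof by remarking that, the index sets and all summands having been matched, \Ref{N decomp new} is just \Ref{N decomp} rewritten, hence follows from Theorem~\ref{N thm}.
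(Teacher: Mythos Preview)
Your proposal is correct and follows exactly the paper's approach: the paper's entire proof reads ``Formula \Ref{N decomp new} is obtained from \Ref{N decomp} by the straightforward change of variables,'' and you have spelled out precisely that change of variables in detail, including the bijection $(s_1,\dots,s_k)\leftrightarrow(j;l_1,\dots,l_{k-1})$, the derivation of the two congruence constraints, and the decomposition of $\sum_r\rho(s_r,i_r,p_r)$ via $X_r=ns_r-p_r$ into the stated $x$-exponent.
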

\begin{proof}
Formula \Ref{N decomp new} is obtained \Ref{N decomp} by the straightforward change of variables.
\end{proof}

\subsection{Macmahon modules}\label{M branch sec} In this section 
we discuss the  $k\to \infty$ limit of formula \Ref{N decomp new}.
 
 Fix partitions $\al,\beta$. Adding zero parts we can think that $\al,\beta$ have $k$ parts if $k$ is sufficiently large. Then, one can define the analytic continuation of the module $\mc N_{\al,\beta}^{(n;p)}(u)$ with respect to parameter $k$, the result is the so called Macmahon module
 $\mc M_{\al,\beta,\emptyset}^{(n;p)}(u,K)$. The Macmahon  module $\mc M_{\al,\beta,\emptyset}^{(n;p)}(u,K)$ is an admissible tame lowest weight $\mc E_n$-module of level $K$ which is irreducible for generic values of $K$, and whose basis is labeled by plane partitions with boundary conditions $\al,\beta,\emptyset$, see \cite{FJMM2}.

We conjecture the decomposition of $\mc M_{\al,\beta,\emptyset}^{(n;0)}(u,K)$ as $\mathcal E_{n-1}^{\mdf{n-1|0}}\otimes \mathcal E_1^{\mdf{n-1||0}}$ module based on \Ref{N decomp new} as follows.

Fix non-negative integers $l_1,\dots,l_t, l_1',\dots,l_t'$.
Let $L_k$ be the vector with $k$ components of the form $L=(l_1,\dots,l_t,0,\dots,0,l'_{t},l_{t-1}',\dots,l_1')$.

Similarly to the inductive construction of the Macmahon module, we expect the following.
\begin{conj}
There exists an $\mc E_n$ lowest weight admissible tame module
$\mdf{\mc M}_{\ga(l),\beta}^{\mdf{(n;p),\ga(l')}}(u,K)$ of level $K$ which is the analytic continuation of
$\mc N_{\ga(L_k),\beta}^{(p)}(u)$ with respect to $k$.
\end{conj}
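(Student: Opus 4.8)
The plan is to build the module $\mc M_{\ga(l),\beta}^{(n;p),\ga(l')}(u,K)$ by mimicking, one level higher, the inductive construction of the Macmahon module $\mc M_{\al,\beta,\emptyset}^{(n;p)}(u,K)$ from \cite{FJMM2}, but now with the role of the partition $\al$ (equivalently, of the sequence of evaluation parameters $u_i$) played by the vector $L_k=(l_1,\dots,l_t,0,\dots,0,l'_t,\dots,l'_1)$. Concretely, for each $k$ large enough that $L_k$ has the prescribed shape, the module $\mc N_{\ga(L_k),\beta}^{(p)}(u)$ is defined as a submodule of a finite tensor product of Fock modules $\mc F^{(p_1)}(u_1)\otimes\cdots\otimes\mc F^{(p_k)}(u_k)$ cut out by the partition inequalities \Ref{part cond}; the colors $p_i$ and evaluation parameters $u_i$ are determined by $\ga(L_k),\beta$ and $u$ through formulas of the type \Ref{colors}. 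One then observes that, exactly as in the proof that the Macmahon module is well defined, all structure constants (matrix coefficients of the generators of $\mc E_n$ in the natural monomial basis indexed by plane partitions with boundary $\ga(l),\beta,\ga(l')$) depend \emph{rationally} on the level $K=q^{k}$ and on $u$. This lets one perform the analytic continuation in $k$: replace $q^k$ by a formal parameter $K$ and check that the limiting formulas still define an action of $\mc E_n$.

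First I would set up the combinatorial indexing set: plane partitions $\pi$ with asymptotic boundary conditions $\ga(l)$ along one axis, $\beta$ along the second, and $\ga(l')$ along the third; this is the $k\to\infty$ stabilization of the set of $k$-tuples of partitions satisfying \Ref{part cond}. Next I would write down the candidate action of the currents $E_i(z),F_i(z),K_i^\pm(z)$ on the span of $\ket{\pi}$ by taking the $k\to\infty$ limit of the known formulas on $\mc N_{\ga(L_k),\beta}^{(p)}(u)$, using that for a fixed vector only finitely many tensor factors are "active" (the module is admissible, and the boundary stabilizes), so each matrix coefficient stabilizes to a well-defined rational function of $K$. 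Then I would verify the defining relations of $\mc E_n$: because each relation, evaluated on a fixed pair of basis vectors, involves only finitely many tensor factors, it reduces to the already-known relations on $\mc N_{\ga(L_k),\beta}^{(p)}(u)$ for $k$ large, and holds for all $K$ by the rationality just established. Lowest-weight-ness and the computation of the lowest weight $(\phi_i(z))$ follow by taking the limit of the lowest weight of $\mc N_{\ga(L_k),\beta}^{(p)}(u)$; tameness and admissibility are inherited from the finite-$k$ modules because the joint $K_i^\pm(z)$-spectrum is combinatorially separated by the plane partition $\pi$ itself, and this separation survives the limit. Irreducibility for generic $K$ would be argued as for the ordinary Macmahon module: a proper submodule would have to be generated by a singular vector, and for generic $K$ no such vector exists, by a degree-by-degree argument using that the relevant $\phi_i$-eigenvalues are algebraically independent.

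The main obstacle is the control of the $k\to\infty$ limit: one must prove that the stabilized structure constants genuinely assemble into an action of $\mc E_n$, i.e.\ that no "boundary effects" are lost. The subtle point is that in $\mc N_{\ga(L_k),\beta}^{(p)}(u)$ the parameter $k$ enters both through the number of tensor factors and through the level $K=q^k$ appearing inside $\psi$-factors and inside the constants in the Fock-module formulas of Section \ref{rep sec}; one has to check that after isolating the rational dependence on $K$, the limit of the normalized matrix coefficients exists and is compatible with the relations — this is precisely the nontrivial content of the conjecture, and is the reason it is stated as a conjecture rather than a theorem. A secondary difficulty is identifying the correct three-sided boundary data: one should check that the new "third boundary" $\ga(l')$ produced in the limit is consistent with the decomposition \Ref{N decomp new}, i.e.\ that the Macmahon-type module obtained here is exactly the one predicted by letting $k\to\infty$ in that formula, so that the analogue of Theorem \ref{N thm} would hold with $\mc N$ replaced by $\mc M$ on both sides.
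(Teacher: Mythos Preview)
The statement you are attempting to prove is explicitly labeled a \emph{Conjecture} in the paper; the authors do not provide a proof, and indeed the surrounding text (``Similarly to the inductive construction of the Macmahon module, we expect the following'') makes clear that this is an expectation motivated by analogy with \cite{FJMM2}, not a result they claim to have established. There is therefore no proof in the paper to compare your proposal against.

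That said, your outline is entirely in the spirit of what the authors have in mind: construct the module by analytic continuation in $k$ of $\mc N_{\ga(L_k),\beta}^{(p)}(u)$, exactly as the ordinary Macmahon module $\mc M_{\al,\beta,\emptyset}^{(n;p)}(u,K)$ was built in \cite{FJMM2}. You correctly identify the main obstruction --- controlling the $k\to\infty$ limit and showing that the stabilized matrix coefficients, once the dependence on $q^k$ is replaced by a formal level $K$, still satisfy the relations of $\mc E_n$ --- and you also correctly observe that this is precisely why the statement remains a conjecture. One point to be careful about: for the ordinary Macmahon module the third boundary is $\emptyset$, whereas here the nontrivial tail $(l'_t,\dots,l'_1)$ of $L_k$ forces a genuinely nonempty third asymptotic boundary $\ga(l')$, so the combinatorial indexing set and the stabilization argument are more delicate than in \cite{FJMM2}; this is the new phenomenon the conjecture is asserting, and your sketch does not yet indicate how to handle it beyond the case already done there.
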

 
Note that the module $\mdf{\mc M}_{\ga(l),\beta}^{\mdf{(n;p),\ga(l')}}(u,K)$ does not change if sequences
$l$ and $l'$ are extended by finitely many zeros. Namely, if $\tilde l=(l_1,\dots,l_t,0)$ and
$\tilde l'=(l_1',\dots,l_t',0)$ then 
$\mdf{\mc M}_{\ga(l),\beta}^{\mdf{(n;p),\ga(l')}}(u,K)
=\mdf{\mc M}_{\ga(\tilde l),\beta}^{\mdf{(n;p),\ga(l')}}(u,K)
=\mdf{\mc M}_{\ga(l),\beta}^{\mdf{(n;p),\ga(\tilde l')}}(u,K)$.
 
 \medskip
 \mdf{
If $l'=\emptyset$, this module is the Macmahon module:
$\mc M_{\ga(l),\beta}^{\mdf{(n;p),\ga(\emptyset)}}(u,K)=\mc M_{\ga(l),\beta}^{(n;p)}(u,K)$.
}
 
Recall that the parameters $a_i,b_i$ are given by \Ref{ab}, $p_i$ by \Ref{colors}, $m$ by  \Ref{m}, and $y$ by \Ref{y}.
If the partitions $\alpha,\beta$ have $t$ non-zero parts, we set $a_r=b_r=m_r=p_r=0$ for $r>t$. 

Let $G$ be the Gordon matrix given by $G_{i,j}=\min\{i,j\}$.

Then we have the following decomposition formula.

\begin{conj}
We have an isomorphism of $\mathcal E_{n-1}^{\mdf{n-1|0}}\otimes \mathcal E_1^{\mdf{n-1||0}}$ modules
\be
\lefteqn{\mc M_{\al,\beta,\emptyset}^{(n;0)}\mdf{(u,K)}
=\mathop{\oplus}\limits_{\buildrel{0\le i_r,i'_r\le n-2} 
\over {r=1,2,\cdots}}\mathop{\oplus}\limits_{j=-\infty}^{+\infty}
x^{\rho(i,i';j)}z^{w(i,i';j)}}
\\ 
&&
\times
\left(\mathop{\oplus}\limits_{\buildrel{l_r,l'_r,\dots\ge0} 
\over{r=1,2,\cdots}} x^{\frac{y^tG y +(y')^tG y'}{2(n-1)}}
\mdf{\mc M}_{\gamma( l),\beta}^{\mdf{(n-1;n-1-i_1'),\gamma(l')}}(-qq_1^{\frac{s}{n-1}}u\mdf{,K})
\boxtimes \mdf{\mc M}_{\gamma(l),\alpha}^{\mdf{(1),\gamma(l')}}(-qq_3^su\mdf{,K})\right),
\en
where $y'_r=nl_r'$, $s=\sum_{r\geq 1} y_r$,
\be
&\rho(i,i';j)=\frac{n(n-1)}{2}j
+\frac{(n-1)}{2}\sum\limits_{r\ge1} p_r
+\frac{n+1}{2}\sum\limits_{r\ge1} (i_r+i'_r)
-\frac{1}{2(n-1)}\sum\limits_{r\ge1} (i_r^2+(i'_r)^2),
\\
&
w(i,i';j)=\sum_{r\ge 1}\bigl(\eta^{(p_r)}+\eta_{i_r}+\eta_{i'_r}\bigr)-j\eta,
\en
and the summation is over $j\in\Z$, 
$i_r,i'_r\in\{0,\cdots,n-2\}$ and 
non-negative integers $l_r,l_r'$, $r\ge1$, 
such that only finitely many $i_r,i_r',l_r,l_r'$ are non-zero and 
\be
\mdf{l_r+a_r}\equiv i_r-i_{r+1}+b_r \ (\on{mod} n-1), \qquad l_r'=i_{r+1}'-i_r' \ (\on{mod} n-1),
\en
and
\be
\sum_{r\geq 1} rl_r -\sum_{r\geq 1} rl_r'=(n-1) j+\sum_{r\geq 1}(i_r+p_r+i_r')+\sum_{r\geq 1}
\mdf{rm_r}.
\en
\end{conj}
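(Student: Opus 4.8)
The plan is to obtain the conjectural decomposition as the $k\to\infty$ limit of Theorem~\ref{N thm} written in the form \Ref{N decomp new}, mirroring the way the Macmahon module $\mc M_{\al,\beta,\emptyset}^{(n;p)}(u,K)$ is itself produced as the analytic continuation in $k$ of $\mc N_{\al,\beta}^{(n;p)}(u)$ (see \cite{FJMM2}). Concretely, one adds zero parts to $\al,\beta$ so that they have $k\gg t$ parts, keeps the evaluation parameters and colors specialized as in \Ref{colors}, and substitutes into \Ref{N decomp new} the summation vector $\ga(L_k)$ with $L_k=(l_1,\dots,l_t,0,\dots,0,l'_t,\dots,l'_1)$. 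The single boundary datum $\ga(L_k)$ of the constituent modules then splits, in the limit, into a ``left'' boundary $\ga(l)$ — which is the one already present in the $\mc N$-modules — and a ``right'' boundary $\ga(l')$, which becomes the extra label of the modules $\mc M_{\ga(l),\beta}^{(n-1;\cdot),\ga(l')}(u,K)$ whose existence is the content of the first Conjecture above and must therefore be established en route.

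First I would record the limiting behaviour of the numerical data. Since $(C_k^{-1})_{ir}=i(k-r)/k\to\min(i,r)=G_{ir}$ as $k\to\infty$ for fixed $i\le r$, while the terms of $C_k^{-1}$ coupling a ``left'' index to a ``right'' index carry a factor $O(1/k)$, the quadratic form degenerates as $y^tC_k^{-1}y\to y^tGy+(y')^tGy'$ with $y'_r=nl'_r$, as stated. Likewise the two congruence conditions in \Ref{N decomp new} become, in the limit, the congruence $l_r+a_r\equiv i_r-i_{r+1}+b_r\pmod{n-1}$ (together with $l'_r\equiv i'_{r+1}-i'_r\pmod{n-1}$ coming from the tail) and the honest equality $\sum_r rl_r-\sum_r rl'_r=(n-1)j+\sum_r(i_r+p_r+i'_r)+\sum_r rm_r$: here the right-end index $k-r$ satisfies $(k-r)l'_r\equiv -rl'_r\pmod k$, and the $\pmod k$ relation of \Ref{N decomp new} becomes vacuous as $k\to\infty$. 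The exponents $\rho(i,i';j)$ and $w(i,i';j)$, and the evaluation shifts through $s=\sum_{r\ge1}y_r$, are then read off by a direct telescoping computation from the corresponding expressions in \Ref{N decomp new}, using $\bar y=n(n-1)j+(n-1)\sum_r p_r+n\sum_r(i_r+i'_r)$; this is the same kind of bookkeeping by which \Ref{N decomp new} is derived from \Ref{N decomp}, and one checks in particular that $\tfrac{\bar y^2}{2(n-1)k}\to0$ while the remaining terms assemble into the stated $\rho(i,i';j)$.

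Next I would argue, at the level of lowest weight vectors, that the vectors $\bigotimes_j\ket{\La^{s_j,i_j}}$ (now an infinite tensor product with all but finitely many tensorands in their ground state) remain lowest weight vectors for both $\mathcal E_{n-1}^{n-1|0}$ and $\mathcal E_1^{n-1||0}$, exactly as in the proofs of Theorems~\ref{Fock n thm} and \ref{k Fock n thm}: after conjugating by a large power of $T$ every fused-current generator acts by adding and removing a bounded number of boxes, and for the $s_j$ sufficiently large no vector of smaller principal degree in the relevant cweight sector is reachable. The lowest weights of the constituent $\mc M$-modules would then be pinned down by the analogue of Lemma~\ref{h1 determines tensor}, i.e.\ by computing the eigenvalues of $K_i$ and of $H_{i,1}$ (via Lemma~\ref{H1 lem}) on the lowest weight vector and on the principal-degree-one vectors of each summand and matching them with those of the appropriate product of Macmahon modules. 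Finally one must show that both sides have the same graded character in the principal gradation — here the $k\to\infty$ stabilization of the graded characters of the $\mc N$-modules is used — which upgrades the resulting containment of the right-hand side in the left-hand side to an isomorphism.

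The main obstacle will be making the analytic continuation in $k$ rigorous. Unlike the finite-$k$ statement of Theorem~\ref{N thm}, the objects appearing on the right-hand side, the modules $\mc M_{\ga(l),\beta}^{(n-1;\cdot),\ga(l')}(u,K)$, are themselves only conjectured to exist (the first Conjecture), so one has to prove their existence, identify them as analytic continuations of $\mc N_{\ga(L_k),\beta}^{(\cdot)}(u)$, and control the two infinite families of boundary labels $l,l'$ uniformly in $k$; in particular one needs a $k$-independent description of the lowest weights and graded characters so that the limit of a decomposition remains a decomposition, and so that the doubly-infinite direct sum on the right converges termwise in each graded piece. A secondary difficulty, already present at finite $k$ in Theorem~\ref{k Fock n thm}, is that $\mathcal E_{n-1}^{n-1|0}$ and $\mathcal E_1^{n-1||0}$ are not Hopf subalgebras, so the tensor-product shape of the right-hand side is not automatic from any coproduct compatibility and has to be verified directly by the stable-zone argument.
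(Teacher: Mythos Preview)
This statement is a \emph{Conjecture} in the paper, not a theorem; the paper provides no proof. What you have written is precisely the heuristic derivation by which the authors arrive at the formula: take the $k\to\infty$ limit of \eqref{N decomp new}, observe that $(C_k^{-1})_{ir}\to G_{ir}$ on the block-diagonal and vanishes on the off-diagonal, split the vector $L_k=(l_1,\dots,l_t,0,\dots,0,l'_t,\dots,l'_1)$ into a left tail $l$ and a right tail $l'$, and track the exponents and congruences. Your bookkeeping is correct, and in particular your observation that $\bar y$ stays bounded (since $p_r=i_r=i'_r=0$ for $r>t$) so that $\bar y^2/(2(n-1)k)\to0$ is exactly what makes the degree formula $\rho(i,i';j)$ come out as stated.

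You have also correctly located the genuine obstruction and named it yourself: the modules $\mc M_{\gamma(l),\beta}^{(n-1;\cdot),\gamma(l')}(u,K)$ appearing on the right-hand side are the subject of the \emph{preceding} Conjecture in the same section, and their existence is not established in the paper. Until that is settled there is no candidate object to compare characters against, and the ``upgrade containment to equality by character count'' step cannot even be formulated. So your proposal is not a proof with a gap; it is the paper's own motivation for the conjecture, together with an honest statement of why it remains a conjecture.
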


\bigskip

 {\bf Acknowledgments.}
The authors thank Igor Burban for drawing their attention to the paper
\cite{BS} where the quantum toroidal $\mathfrak{gl}_1$ algebra has been 
introduced and its basic properties are proved.

Research of MJ is supported by the Grant-in-Aid for Scientific Research B-23340039.
Research of TM is supported by the Grant-in-Aid for Scientific Research B-22340031.

The present work has been carried out during the visits of BF and EM 
to Kyoto University. They wish to thank the University for hospitality.

\end{document}